\newcommand{\opnorm}[1]{\left\|#1\right\|_{\rm op}}
\renewcommand{\exp}{{\rm{exp}}}
\newcommand{\argmin}{\mathop{\rm arg\min}}
\newcommand{\indi}{{\mathds{1}}}
\newcommand{\wh}{\widehat}
\newcommand{\wt}{\widetilde}
\newcommand{\wc}{\widecheck}
\newcommand{\indc}[1]{\indi\left\{{#1}\right\}}
\newcommand{\norm}[1]{\left\|{#1} \right\|}
\newcommand{\abs}[1]{\left|{#1}\right|}
\newcommand{\Prob}{\mathbb{P}}
\newcommand{\Expect}{\mathbb{E}}
\def\Var{\textsf{Var}}
\def\Cov{\textsf{Cov}}
\newcommand{\bpar}[1]{\left({#1}\right)}
\newtheorem{Theorem}{Theorem}
\newtheorem{Lemma}{Lemma}
\newtheorem{Remark}{Remark}
\newtheorem{Proposition}{Proposition}
\newtheorem{Assumption}{Assumption}
\DeclareMathAlphabet\mathbfcal{OMS}{cmsy}{b}{n}
\definecolor{greengray}{RGB}{0,128,0}
\newcommand*{\rom}[1]{\expandafter\@slowromancap\romannumeral #1@}
\begin{document}
\title{Confidence Intervals for Linear Models with Arbitrary Noise Contamination}
	
\author[1]{Dong Xie}
\author[1]{Chao Gao\thanks{The research of CG is supported in part by NSF Grants ECCS-2216912 and DMS-2310769, and an Alfred Sloan fellowship.}}
\author[2]{John Lafferty}
\affil[1]{
University of Chicago
}
\affil[2]{
Yale University
}
	\date{}
	\maketitle


\begin{abstract}
We study confidence interval construction for linear regression under Huber's contamination model, where an unknown fraction of noise variables is arbitrarily corrupted. While robust point estimation in this setting is well understood, statistical inference remains challenging, especially because the contamination proportion is not identifiable from the data. We develop a new algorithm that constructs confidence intervals for individual regression coefficients without any prior knowledge of the contamination level. Our method is based on a Z-estimation framework using a smooth estimating function. The method directly quantifies the uncertainty of the estimating equation after a preprocessing step that decorrelates covariates associated with the nuisance parameters. We show that the resulting confidence interval has valid coverage uniformly over all contamination distributions and attains an optimal length of order $O(1/\sqrt{n(1-\epsilon)^2})$, matching the rate achievable when the contamination proportion $\epsilon$ is known. This result stands in sharp contrast to the adaptation cost of robust interval estimation observed in the simpler Gaussian location model.
\end{abstract}


\begin{sloppypar}
\section{Introduction} \label{sec: intro}

This paper considers a linear model
\begin{equation}
y_i=X_i^Tb+z_i,\quad\quad\text{ for } i=1,\cdots,n, \label{eq:mul-lin-comp}
\end{equation}
where the noise variables $\{z_i\}_{i=1}^n$ have an $\epsilon$ fraction of contamination. That is,
\begin{equation}
z_i \overset{iid}\sim (1-\epsilon)N(0,1)+\epsilon Q, \label{eq:noise-sig}
\end{equation}
and they are independent from the $p$-dimensional feature vectors $\{X_i\}_{i=1}^n$. Our goal is to construct a confidence interval for a given coordinate of the regression vector $b\in\mathbb{R}^p$ without imposing any assumption on the contamination distribution $Q$.

The noise distribution (\ref{eq:noise-sig}) is a particular instance of Huber's contamination model \citep{huber1964robust}. Since there is no assumption imposed on $Q$, the contaminated noise variables can just be regarded as arbitrary numbers, which implies the necessity of robust statistical inference. In the regression setting, the linear model (\ref{eq:mul-lin-comp}) with noise following (\ref{eq:noise-sig}) and independent of the design matrix is known as robust regression with oblivious contamination, and has been previously studied by \cite{tsakonas2014convergence,bhatia2015robust,bhatia2017consistent,suggala2019adaptive,gao2020model,d2021consistent,d2021consistent2}. In particular, it was shown by \cite{d2021consistent,d2021consistent2} that an M-estimator with Huber loss achieves the error bound
\begin{equation}
\|\wh{b}-b\|=O_{\mathbb{P}}\left(\sqrt{\frac{p}{n(1-\epsilon)^2}}\right). \label{eq:minimax-rate-l2}
\end{equation}
Moreover, the rate (\ref{eq:minimax-rate-l2}) is optimal in the sense that a matching minimax lower bound can be established for some $Q$ \citep{d2021consistent2}. A notable feature of the rate (\ref{eq:minimax-rate-l2}) is that consistent estimation is still possible even when the contamination proportion $\epsilon$ is close to $1$. This is because in the setting of oblivious contamination, only the response variables $\{y_i\}_{i=1}^n$ are contaminated. In comparison, if the contamination affects both $\{y_i\}_{i=1}^n$ and $\{X_i\}_{i=1}^n$, the optimal estimation rate (\ref{eq:minimax-rate-l2}) becomes $\sqrt{\frac{p}{n}}+\epsilon$ \citep{gao2020robust}, which implies inconsistency unless $\epsilon$ is a vanishing function of the sample size $n$.

Although global estimation of $b$ is well understood, the problem of optimal statistical inference for a given coordinate has remained open. When $\wh{b}$ is computed by M-estimation, its asymptotic normality has been studied by \cite{huber1973robust,yohai1979asymptotic,portnoy1984asymptotic,portnoy1985asymptotic,mammen1989asymptotics} under appropriate conditions on sample size $n$ and dimension $p$. Recent developments using random matrix theory even obtain asymptotic distributions when $n$ and $p$ are proportional \citep{bean2013optimal,el2013robust,karoui2013asymptotic,donoho2016high,lei2018asymptotics}. However, given the lack of assumptions on $Q$, deriving an asymptotic distribution under (\ref{eq:noise-sig}) would be impossible, even when $p=1$.

Beyond M-estimation, another approach to constructing confidence intervals is to use high-probability error bounds for estimation. Given (\ref{eq:minimax-rate-l2}), the optimal error rate for estimating a given coordinate of $b$ is expected to be $\frac{1}{\sqrt{n(1-\epsilon)^2}}$. Suppose that one can construct a robust estimator such that
\begin{equation}
|\wh{b}_1-b_1| = O_{\mathbb{P}}\left(\frac{1}{\sqrt{n(1-\epsilon)^2}}\right).
\end{equation}
Then, the interval $\left[\wh{b}_1-\frac{C}{\sqrt{n(1-\epsilon)^2}},\wh{b}_1+\frac{C}{\sqrt{n(1-\epsilon)^2}}\right]$ is guaranteed to cover $b_1$ with some appropriate constant $C>0$. However, this requires knowledge of the contamination proportion $\epsilon$, which is usually not available in practice. Additionally, the parameter $\epsilon$ is not identifiable in the setting of (\ref{eq:noise-sig}), and thus learning $\epsilon$ from data is impossible. This challenge of adaptive robust confidence interval construction has recently been studied by \cite{luo2024adaptive} in the simpler setting of a Gaussian location model. Given i.i.d.~observations generated from $(1-\epsilon)N(\beta,1)+\epsilon Q$ with some arbitrary $Q$, it is shown by \cite{luo2024adaptive} that the optimal length of a confidence interval covering the location parameter $\beta$ is of order $\frac{1}{\sqrt{\log n}}+\frac{1}{\sqrt{\log(1/\epsilon)}}$ when $\epsilon$ is unknown. In contrast, with the knowledge of $\epsilon$, the optimal length is of order $\frac{1}{\sqrt{n}}+\epsilon$. Therefore, ignorance of $\epsilon$ implies a significant adaptation cost in construction of robust confidence intervals. It is natural to suspect that such an adaptation cost is also present in the regression setting (\ref{eq:mul-lin-comp}).

A natural approach for handling unknown contamination levels is Lepski’s method \citep{lepski1997optimal,jain2022robust}, which has been successfully used for adaptive estimation. However, while such methods yield estimators that achieve optimal rates without knowing $\epsilon$, they do not directly provide confidence intervals with guaranteed coverage, as uncertainty quantification requires estimating the statistical error itself rather than merely selecting an estimator with optimal rate.

In this paper, we propose an algorithm that computes a confidence interval for a given coordinate of $b$. The procedure does not require knowledge of the contamination level $\epsilon$. We show that the resulting confidence interval achieves the desired coverage uniformly over all contamination distributions $Q$, and its length is bounded by $O_{\mathbb{P}}\left(\frac{1}{\sqrt{n(1-\epsilon)^2}}\right)$. In particular, this rate is minimax-optimal up to constants even when $\epsilon$ is known \citep{d2021consistent2}. Taken together, these results establish that one can simultaneously attain adaptivity to unknown contamination, minimax-optimal length, and robustness over arbitrary contamination distributions. In contrast to the negative result of \cite{luo2024adaptive}, this shows that there is no adaptation cost for robust confidence interval construction in the regression setting. The length of the confidence interval automatically shrinks as the data quality improves (i.e., as $\epsilon$ decreases). Moreover, the same theoretical guarantee continues to hold when the Gaussian noise $N(0,1)$ in (\ref{eq:noise-sig}) is replaced by other non-degenerate distributions such as $t$-distribution or the Cauchy distribution, demonstrating robustness to both heavy-tailed noise and arbitrary contamination.

Our algorithm is constructed from a one-dimensional estimating equation
\begin{equation}
\frac{1}{n}\sum_{i=1}^ng(\wt{x}_i\wh{b}_1-\wt{y}_i)\wt{x}_i=0, \label{eq:Z-intro}
\end{equation}
where
\begin{equation}
g(t)=\tanh(t)=\frac{e^t-e^{-t}}{e^t+e^{-t}}, \label{eq:tanh}
\end{equation}
and $\wt{x}_i,\wt{y}_i\in\mathbb{R}$ are transformations of $(X_i,y_i)$ such that $\wt{y}_i\approx \wt{x}_ib_1+z_i$ approximately holds. The details of constructing $(\wt{x}_i,\wt{y}_i)$ will be given in Algorithm \ref{alg:CI}. 
The interval estimator is then defined by
\begin{equation}
\left\{\beta: -\frac{C}{\sqrt{n}}\leq \frac{1}{n}\sum_{i=1}^ng(\wt{x}_i\beta-\wt{y}_i)\wt{x}_i\leq \frac{C}{\sqrt{n}}\right\}.\label{eq:CI-intro}
\end{equation}
In other words, instead of quantifying the uncertainty of a point estimator, the interval (\ref{eq:CI-intro}) directly quantifies the uncertainty of a Z-estimation objective. This powerful idea was recently proposed by \cite{chang2024confidence}. In the setting of robust regression, we are able to characterize the magnitude of $\frac{1}{n}\sum_{i=1}^ng(\wt{x}_ib_1-\wt{y}_i)\wt{x}_i$ uniformly over all contamination distributions $Q$ in (\ref{eq:noise-sig}). Moreover, the choice of $C$ in (\ref{eq:CI-intro}) does not depend on $\epsilon$, so that (\ref{eq:CI-intro}) is adaptive to the unknown contamination distribution and the unknown contamination proportion.

The Z-estimation objective (\ref{eq:Z-intro}) uses the hyperbolic tangent function, which is analytically smooth. In fact, for the purpose of point estimation, replacing $g(\cdot)$ by the derivative of Huber loss would also work. However, by using the smoother hyperbolic tangent, we are able to control the model misspecification error in the approximation $\wt{y}_i\approx \wt{x}_ib_1+z_i$ through a Taylor expansion.

It is interesting to note that (\ref{eq:CI-intro}) can also be derived from the general strategy of constructing adaptive robust confidence interval proposed by \cite{luo2024adaptive}. Specifically, as long as one can construct a testing function that is able to distinguish between different values of the parameter together with different contamination proportions, the testing function can be inverted into a robust confidence interval that is adaptive to the unknown contamination level $\epsilon$. Although they only stated this framework for location families, it turns out that the same principle can be applied to the regression setting. In particular, one can construct an appropriate test whose inversion recovers the formula (\ref{eq:CI-intro}). We refer the reader to Section \ref{sec:test-inv} for more details.


\subsection{Related Work}

Due to the lack of assumptions on contamination, confidence interval construction under the noise (\ref{eq:noise-sig}) is required to be \textit{nonparametric}, since there is no asymptotic distribution available. One class of nonparametric methods is based on the bootstrap. In particular, bootstrap in linear regression was developed in \cite{freedman1981bootstrapping}, and its variants and theoretical properties were further investigated by \cite{bickel1983bootstrapping,mammen1989asymptotics,mammen1993bootstrap}. However, bootstrap consistency is usually based on the existence of an asymptotic distribution, which is still violated by (\ref{eq:noise-sig}).

There are several recently proposed methods that impose very few assumptions on the noise variables. The cyclic permutation test \citep{lei2021assumption} and the residual permutation test \citep{wen2025residual} only assume the noise variables to be exchangeable, and their tests can be inverted into confidence intervals. However, the theoretical properties under (\ref{eq:noise-sig}) are unknown, and we find the two methods to be quite conservative in numerical experiments. The HulC procedure \citep{kuchibhotla2024hulc} is a general black-box tool to quantify uncertainty of a nearly median-unbiased point estimator. In practice, a large sample size is needed for the confidence interval not to be too wide due to the use of sample splitting in its implementation. Finally, robust universal inference \citep{park2023robust} is a general framework of constructing robust confidence intervals using sample splitting. Applying this framework to a specific model requires the design of a specific testing procedure. Whether this could be done for a linear model with noise of the form (\ref{eq:noise-sig}) is an interesting question to be explored.

\subsection{Paper Organization}

We introduce the new algorithm and state its theoretical guarantee in Section \ref{sec:main}. To understand why the proposed method works, we will first discuss the problem and illustrate the main idea for $p=1$ in Section \ref{sec:CI}. Then in Section \ref{sec:decorr}, we will discuss how to reduce the original problem to the one-dimensional setting through decorrelation. Extensive numerical experiments will be reported in Section \ref{sec:num}. We discuss two robust regression settings beyond the oblivious contamination model in Section \ref{sec:beyond} and collect all technical proofs in Section \ref{sec:proof}.

\subsection{Notation}

Define $[n] = \{1,\ldots,n\}$ for any positive integer $n$. For any two sequences $\{a_n\}$ and $\{b_n\}$, we write $a_n \asymp b_n$ if there exist constants $c, C>0$ such that $ca_n \leq b_n\leq Ca_n$ for all $n$; $a_n \lesssim b_n$ means that $a_n \leq C b_n$ holds for some constant $C > 0$ independent of $n$. For a vector $v$, its $\ell_2$ norm is defined by $\|v\|=\sqrt{\sum_iv_i^2}$. The $p$-dimensional unit sphere is defined by $S_{p-1}=\{v\in\mathbb{R}^p:\|v\|=1\}$. For a matrix $A$, its operator norm $\opnorm{A}$ is defined by the largest singular value. For a set $B$, we use $|B|$ for its cardinality. The CDF of $N(0,1)$ is denoted by $\Phi(\cdot)$. We use $\mathbb{E}$ and $\mathbb{P}$ for generic expectation and probability operators whenever the distribution is clear from the context.

\section{Main Results}\label{sec:main}

With $X_i=(x_i,w_i^T)^T$ and $b=(\beta,\theta^T)^T$, we rewrite the linear model (\ref{eq:mul-lin-comp}) as
\begin{equation}
y_i=\beta x_i + \theta^Tw_i + z_i, \label{eq:multiv-lin}
\end{equation}
where $\beta\in\mathbb{R}$ is the parameter of interest and $\theta\in\mathbb{R}^{p-1}$ collects all other regression coefficients. Define the Huber loss $\rho(\cdot)$ as
$$\rho(t)=\begin{cases}
t^2 & |t|\leq 1 \\
2|t|-1 & |t|>1.
\end{cases}$$
An algorithm of computing a robust confidence interval of $\beta$ is given below.
\begin{algorithm}
\DontPrintSemicolon
\SetKwInOut{Input}{Input}\SetKwInOut{Output}{Output}
\Input{$\{(x_i,w_i,y_i)\}_{i=1}^n$}
\Output{$\wh{\beta}_L$ and $\wh{\beta}_R$} 
\nl $\wh{\alpha} \leftarrow \argmin_{\alpha}\frac{1}{n}\sum_{i=1}^n(x_i-\alpha^Tw_i)^2$;

\nl For $i\in[n]$,

\qquad $\wt{x}_i\leftarrow x_i-\wh{\alpha}^Tw_i$;

\nl $\wh{\gamma} \leftarrow \argmin_{\gamma}\min_{\beta}\frac{1}{n}\sum_{i=1}^n\rho(y_i-\beta \wt{x}_i-\gamma^Tw_i)$;

\nl For $i\in[n]$,

\qquad $\wt{y}_i\leftarrow y_i-\wh{\gamma}^Tw_i$;

\nl $\wh{\beta}_L\leftarrow \inf\left\{\beta: \frac{\sum_{i=1}^ng\left(\wt{x}_i\beta-\wt{y}_i\right)\wt{x}_i}{\sqrt{\sum_{i=1}^n\wt{x}_i^2}}\geq -1.5\sqrt{\log(2/\alpha)}\right\}$;

$\wh{\beta}_R\leftarrow \sup\left\{\beta: \frac{\sum_{i=1}^ng\left(\wt{x}_i\beta-\wt{y}_i\right)\wt{x}_i}{\sqrt{\sum_{i=1}^n\wt{x}_i^2}}\leq 1.5\sqrt{\log(2/\alpha)}\right\}$.

\caption{Constructing Confidence Interval for $\beta$}
\label{alg:CI}
\end{algorithm}
Algorithm \ref{alg:CI} first computes $\{(\wt{x}_i,\wt{y}_i)\}_{i=1}^n$ by regressing $x_i$ and $y_i$ on $w_i$ respectively. Then, $\wt{x}_i$ and $\wt{y}_i$ are defined as the residuals. Since the covariates $\{(x_i,w_i)\}_{i=1}^n$ do not have contamination, regressing $x_i$ on $w_i$ can be done by ordinary least squares. On the other hand, regressing $y_i$ on $w_i$ involves the Huber loss given the presence of outliers in $\{y_i\}_{i=1}^n$. Once we obtain $\{(\wt{x}_i,\wt{y}_i)\}_{i=1}^n$, we apply the formula (\ref{eq:CI-intro}) with $C=1.5\sqrt{\frac{\log(2/\alpha)}{n}\sum_{i=1}^n\wt{x}_i^2}$ to obtain the interval $[\wh{\beta}_L,\wh{\beta}_R]$.

To state the theoretical guarantee of Algorithm \ref{alg:CI}, we impose the following assumption on the random design.
\begin{Assumption} \label{as:bounded moments}
There exist constants $K_1,K_2,K_3>0$, such that for any $v\in S_{p-1}$, the random vector $X_i=(x_i,w_i^T)^T\in\mathbb{R}^p$ satisfies
\begin{enumerate}
\item $\mathbb{P}\left(v^TX_i>t\right)=\mathbb{P}\left(v^TX_i<-t\right)\leq \exp\left(-\frac{t^2}{K_1^2}\right)$ for all $t>0$.
\item $\mathbb{E}|v^TX_i|^2\indi\{|v^TX_i|\leq K_2\}\geq K_3^{-1}$.
\end{enumerate}
\end{Assumption}
In words, for each direction $v\in S_{p-1}$, the projection $v^TX_i$ is a symmetric sub-Gaussian random variable whose distribution is not degenerate in a neighborhood of zero. Note that the gradient of the Huber loss $\rho(t)$ is non-constant only when $|t|$ is small, which is a crucial property for robustness against large outliers. The non-degeneracy in a neighborhood of zero is thus necessary for regression with Huber loss.

\begin{Theorem}\label{thm:main}
Consider i.i.d. samples $\{(x_i,w_i,y_i)\}_{i=1}^n$ generated from the linear model (\ref{eq:multiv-lin}), with $\{(x_i,w_i)\}_{i=1}^n$ and $\{z_i\}_{i=1}^n$ independent from each other and satisfying \prettyref{as:bounded moments} and (\ref{eq:noise-sig}). For any $\alpha\in(0,1)$, there exist constants $c>0$ and $C>0$ depending on $\alpha$, such that
the interval $[\wh{\beta}_L,\wh{\beta}_R]$ computed by Algorithm \ref{alg:CI} satisfies
$$\mathbb{P}\left(\beta\in [\wh{\beta}_L,\wh{\beta}_R]\text{ and }\wh{\beta}_R-\wh{\beta}_L\leq \frac{C}{\sqrt{n(1-\epsilon)^2}}\right) \geq 1-\alpha,$$
as long as $\frac{p^2}{n(1-\epsilon)^4}\leq c$.
\end{Theorem}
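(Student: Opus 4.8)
The plan is to study the monotone pivot $T(u):=\big(\sum_{i=1}^n g(\wt{x}_i u-\wt{y}_i)\wt{x}_i\big)/\sqrt{\sum_{i=1}^n\wt{x}_i^2}$, viewed as a function of the scalar $u$. Since $g=\tanh$ has $g'=\mathrm{sech}^2\ge 0$, each summand has $u$-derivative $g'(\wt{x}_i u-\wt{y}_i)\wt{x}_i^2\ge 0$, so $T$ is nondecreasing. With $\tau:=1.5\sqrt{\log(2/\alpha)}$, monotonicity turns Algorithm \ref{alg:CI} into $[\wh{\beta}_L,\wh{\beta}_R]=\{u:|T(u)|\le\tau\}$, so coverage is equivalent to $|T(\beta)|\le\tau$ at the true coefficient $\beta$, while the length is governed by how fast $T$ crosses the band $[-\tau,\tau]$, i.e. by a lower bound on $T'$ near $\beta$. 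The identity driving everything is $\wt{x}_i\beta-\wt{y}_i=-z_i+\delta^Tw_i$, where $\delta:=\wh{\gamma}-\theta-\beta\wh{\alpha}$ is the error of the Huber step in estimating the reparametrized nuisance coefficient $\gamma^\star=\theta+\beta\wh{\alpha}$ (from $y_i=\beta\wt{x}_i+(\gamma^\star)^Tw_i+z_i$).

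For coverage I would Taylor expand $g(-z_i+\delta^Tw_i)$ about $-z_i$, writing $T(\beta)=A+B+R$ with main term $A=\sum_i g(-z_i)\wt{x}_i/\sqrt{S}$ (here $S:=\sum_i\wt{x}_i^2$), linear correction $B=\delta^T\big(\sum_i g'(-z_i)w_i\wt{x}_i\big)/\sqrt S$, and a quadratic remainder $R$. The heart of coverage is $A$: conditionally on the design it is a weighted sum $\sum_i c_i g(z_i)$ (up to sign, with $c_i=-\wt{x}_i/\sqrt S$ and $\sum_i c_i^2=1$) of i.i.d. variables $g(z_i)\in[-1,1]$, and because the contamination is \emph{oblivious}, $g(z_i)$ is independent of the weights. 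A Bernstein/Hoeffding bound then gives a sub-Gaussian tail with variance proxy at most $1$ (essentially $\mathrm{Var}(\tanh(Z))$ for the clean part, $Z\sim N(0,1)$); the contamination enters only through the bias $\mathbb{E}g(z_i)=\epsilon\,\mathbb{E}_Q[g]$ (using oddness of $g$ and symmetry of $N(0,1)$), which contributes at most $\epsilon\,|\sum_i\wt{x}_i|/\sqrt S=O_{\mathbb{P}}(1)$ since the centered design makes $(\sum_i\wt{x}_i)/\sqrt S$ sub-Gaussian of order one. The gap between $1.5$ and $\sqrt2$ (together with $c$ small below) is what lets $\tau$ dominate $|A|$ at the prescribed level.

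What remains for coverage is to show $B$ and $R$ are negligible, and here I expect the real work. The decorrelation in Steps 1--2 is built for exactly this: the normal equations give $\sum_i w_i\wt{x}_i=0$, so recentering $g'(-z_i)$ by $\mathbb{E}g'$ kills the leading part of $B$ and leaves a mean-zero sum of norm $O_{\mathbb{P}}(\sqrt p)$, whence $|B|\lesssim\|\delta\|\sqrt p$; and $|g''|\le\mathrm{const}$ with $\opnorm{\sum_i w_iw_i^T}=O_{\mathbb{P}}(n)$ gives $|R|\lesssim\sqrt n\,\|\delta\|^2$. The missing ingredient is the rate $\|\delta\|=O_{\mathbb{P}}\big(\sqrt{p/(n(1-\epsilon)^2)}\big)$, which I would obtain from the oblivious robust regression theory of \cite{d2021consistent,d2021consistent2} applied to the design $(\wt{x}_i,w_i)$, after verifying that this design still satisfies a \prettyref{as:bounded moments}-type condition using consistency of the least-squares $\wh{\alpha}$. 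Plugging in yields $|B|\lesssim\sqrt{p^2/(n(1-\epsilon)^2)}$ and $|R|\lesssim\sqrt{p^2/(n(1-\epsilon)^4)}$; the second is binding and is exactly why the hypothesis $p^2/(n(1-\epsilon)^4)\le c$ appears, with $c$ chosen small (depending on $\alpha$) so that $|B|+|R|$ stays well inside the slack. Establishing this $\ell_2$ rate uniformly over all $Q$ and propagating it through the quadratic remainder is the main obstacle.

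For the length I would lower-bound the slope $T'(u)=\sum_i g'(\wt{x}_i u-\wt{y}_i)\wt{x}_i^2/\sqrt S$ uniformly for $u$ within $O\big(1/((1-\epsilon)\sqrt n)\big)$ of $\beta$. Since $g'=\mathrm{sech}^2\ge\mathrm{sech}^2(M)>0$ on $|\cdot|\le M$, and for uncontaminated indices the argument is $-z_i$ plus the uniformly small perturbations $\delta^Tw_i+\wt{x}_i(u-\beta)$, a fraction $\gtrsim(1-\epsilon)$ of the design mass $S$ sits where $g'$ is bounded below, giving $T'(u)\gtrsim(1-\epsilon)\sqrt S\asymp(1-\epsilon)\sqrt n$ across the window (a short covering/monotonicity argument handles uniformity in $u$). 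Combined with $T(\beta)\ge-\tau$, integrating the slope shows $T(\beta+s)>\tau$ once $s\gtrsim\sqrt{\log(2/\alpha)}/((1-\epsilon)\sqrt n)$, and symmetrically on the left, so $\wh{\beta}_R-\wh{\beta}_L\le C/\sqrt{n(1-\epsilon)^2}$. Finally I would combine the coverage and length events by a union bound, the constant $1.5$ and the $\alpha$-dependent choice of $c$ leaving enough room to keep the total failure probability below $\alpha$.
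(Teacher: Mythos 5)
There is a genuine gap, and it sits exactly at the step you call "the heart of coverage": your treatment of the contamination bias in the main term $A=\sum_i g(-z_i)\wt{x}_i/\sqrt{S}$. You split $g(-z_i)$ into a centered part (handled by Hoeffding with variance proxy $1$) plus the bias $\mathbb{E}[g(-z_i)]=\epsilon\,\mathbb{E}_Q[g(-\cdot)]$, and you bound the bias contribution by $\epsilon\,|\sum_i\wt{x}_i|/\sqrt{S}=O_{\mathbb{P}}(1)$. But $O_{\mathbb{P}}(1)$ is not good enough here: $\sum_i\wt{x}_i/\sqrt{S}$ is a \emph{nondegenerate} order-one random variable (for Gaussian design it is essentially $N(0,1)$), so the bias term behaves like $\epsilon|N(0,1)|$ with $Q$ chosen adversarially (e.g.\ a point mass far to the right makes $\mathbb{E}_Q[g]=1$). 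To get coverage at level $1-\alpha$ you must pay the $\alpha$-quantile of this term, which is of order $\epsilon\sqrt{2\log(1/\alpha)}$, \emph{on top of} the centered part's quantile $\approx\sqrt{2\log(2/\alpha)}$. Since the theorem must hold for $\epsilon$ arbitrarily close to $1$ (the constraint $p^2/(n(1-\epsilon)^4)\leq c$ permits this when $n$ is large), the sum of these two quantiles is roughly $2\sqrt{2}\,\sqrt{\log(2/\alpha)}$, which cannot fit under the algorithm's fixed threshold $1.5\sqrt{\log(2/\alpha)}$. The slack between $1.5$ and $\sqrt{2}$ absorbs only the vanishing misspecification terms $B$ and $R$; it cannot absorb an additive term whose own quantile scales like $\sqrt{\log(1/\alpha)}$.

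The paper's proof avoids this issue entirely by exploiting the \emph{design} symmetry in \prettyref{as:bounded moments}, not any property of the noise: in Lemma \ref{lem:self} one writes $\wt{x}_i=\xi_i\wt{x}_i^s$ with $\xi_i$ i.i.d.\ Rademacher independent of $(\wt{x}_i^s,z_i)$, conditions on everything except the signs, and applies Hoeffding to $\sum_i\xi_iu_i$ with $u_i=g(z_i)\wt{x}_i^s/\sqrt{S/n}$. This sum is \emph{exactly} conditionally mean-zero whatever $Q$ and however biased $g(z_i)$ is, with $\sum_iu_i^2\leq n$, giving the clean tail $2\exp(-t^2/2n)$ and hence coverage at threshold $1.45\sqrt{\log(2/\alpha)}$ uniformly in $\epsilon$ and $Q$. (This is precisely why the symmetry condition appears in \prettyref{as:bounded moments}.) A secondary, more repairable issue: you propose to import the rate $\|\delta\|\lesssim\sqrt{p/(n(1-\epsilon)^2)}$ from the oblivious-regression literature applied to the design $(\wt{x}_i,w_i)$, but these rows are not i.i.d.\ (each $\wt{x}_i$ depends on $\wh{\alpha}$, hence on the whole sample), so the external theorems do not apply off the shelf; the paper proves this rate directly (Lemmas \ref{lem:for-gamma} and \ref{lem:estimation-huber}) by comparing $\wt{x}_i$ to the population residual $x_i-\alpha^Tw_i$. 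The remainder of your outline — the Taylor decomposition, the use of the normal equations to kill the linear term, and the slope/curvature argument for the length — matches the paper's strategy.
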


\begin{Remark}\label{rmk:noise}
The noise condition (\ref{eq:noise-sig}) assumes that $z_i$'s are generated from a standard $N(0,1)$ if they are not drawn from contamination. Our proofs would remain the same if $N(0,1)$ is replaced by any distribution satisfying $\mathbb{P}(|z_i|\leq 1)\gtrsim 1$, which includes most heavy tailed distributions such as $t$ and Cauchy. More generally, if the noise is scaled by $\sigma$ (e.g. $N(0,\sigma^2)$), we can scale Algorithm \ref{alg:CI} accordingly by $g(\cdot)\rightarrow g(\cdot/\tau)$ and $\rho(\cdot)\rightarrow \rho(\cdot/\tau)$ for some $\tau>0$. Then, Theorem \ref{thm:main} continues to hold as long as $\tau\lesssim \sigma$ and $\frac{p^2}{n(1-\epsilon)^4(\tau/\sigma)^4}$ is sufficiently small.
\end{Remark}

\begin{Remark}\label{rmk:sub-G}
The sub-Gaussianity of the design can be relaxed to weaker conditions, and the result would still hold under stronger conditions on dimension. For example, under the sub-Weibull tail $\exp\left(-(t/K_1)^{\kappa}\right)$ for some $\kappa\in(0,2]$, the condition $\frac{p^2}{n(1-\epsilon)^4}\leq c$ would be generalized to $\frac{p^{1+\frac{2}{\kappa}}}{n(1-\epsilon)^4}\leq c$. The same proof technique still works after incorporating concentration inequalities in \cite{Kuchibhotla_2022}.
\end{Remark}

\begin{Remark}\label{rmk:exp-inf}
It is crucial that the interval length guarantee holds with high probability instead of in expectation. It can be shown that $\mathbb{E}|\wh{\beta}_R-\wh{\beta}_L|=\infty$ (see Proposition \ref{prop:inf-exp-len}) in the worst-case. This is because with probability $\epsilon^n$, all $z_i$'s are drawn from $Q$ under (\ref{eq:noise-sig}), and thus the expectation is at least $\epsilon^n$ times the expectation under $Q$, which can be arbitrarily large.
\end{Remark}

Theorem \ref{thm:main} also implies that the point estimator $\wh{\beta}=\frac{\wh{\beta}_L+\wh{\beta}_R}{2}$ achieves the error bound $O_{\mathbb{P}}\left(\frac{1}{\sqrt{n(1-\epsilon)^2}}\right)$, which is known to be optimal given the lower bound of estimation in \cite{d2021consistent2}. This immediately implies that the length of $[\wh{\beta}_L,\wh{\beta}_R]$ is also rate optimal, since a smaller interval length would imply an even better point estimator. Moreover, Algorithm \ref{alg:CI} does not require the knowledge of $\epsilon$, and thus it is adaptive to the unknown contamination proportion. Since the lower bound of \cite{d2021consistent2} holds even if the value of $\epsilon$ is given, Theorem \ref{thm:main} implies that there is no adaptation cost in terms of the rate. Compared with the necessity of adaptation cost under the Gaussian location model \citep{luo2024adaptive}, the result of Theorem \ref{thm:main} is unexpected.

\prettyref{as:bounded moments} on the covariate assumes symmetry. The symmetry condition can be avoided by incorporating a preprocessing step of computing the pairwise difference $y_i-y_j=(X_i-X_j)^T\beta+(z_i-z_j)$. The new covariate $X_i-X_j$ certainly satisfies symmetry regardless of the distribution of $X_i$. 


We finally give a brief discussion on the condition $\frac{p^2}{n(1-\epsilon)^4}\leq c$ for some sufficiently small constant $c>0$. When $\epsilon$ is bounded away from $1$, this condition allows $p$ to grow with $n$ as long as $n \gg p^2$. The requirement that the sample size needs to exceed the squared degrees of freedom has been previously noted in the high-dimensional literature when the signal is very sparse \citep{zhang2014confidence,javanmard2014confidence,van2014asymptotically}, and can be shown to be necessary for interval estimation with parametric rate \citep{ren2015asymptotic,cai2017confidence,verzelen2018adaptive}. 
When the signal is dense, however, the condition $n \gg p^2$ is not necessary for confidence interval construction in the absence of contamination \citep{portnoy1985asymptotic,wen2025residual}. In contrast, under Huber's contamination model \eqref{eq:noise-sig}, the optimal sample complexity for confidence interval construction remains unknown. In particular, it is unclear whether the quadratic dependence on $p$ is information-theoretically necessary, even without computational constraints, or whether it can be improved by alternative procedures. Establishing the exact dependence on $p$ and $\epsilon$, as well as understanding possible gaps between information-theoretic and computational limits, are important open problems for future work.

In the next two sections, we will introduce the idea behind each step of Algorithm \ref{alg:CI}. We will first consider a simpler problem of $p=1$ without any nuisance parameter in Section \ref{sec:CI}, and explain why the formula (\ref{eq:CI-intro}) has coverage property under (\ref{eq:noise-sig}) with arbitrary contamination. Section \ref{sec:decorr} will motivate the formulas of $\wt{x}_i$ and $\wt{y}_i$ and explain why Algorithm \ref{alg:CI} works in the original model (\ref{eq:multiv-lin}).

\section{Adaptive Confidence Interval for Univariate Regression}\label{sec:CI}

In this section, we will introduce general methodology of confidence interval construction in a univariate version of (\ref{eq:multiv-lin}) without nuisance parameter. The extension to the multivariate setting will be considered in Section \ref{sec:decorr}.
To be specific, we consider i.i.d. samples $\{(x_i,y_i)\}_{i=1}^n$ that are generated by a linear model
\begin{equation}
y_i=\beta x_i+z_i,\label{eq:1dtoy}
\end{equation}
where $x_i\sim N(0,1)$ and $z_i\sim (1-\epsilon)N(0,1)+\epsilon Q$ are independent from each other. Our goal is to construct a confidence interval when the contamination proportion $\epsilon$ is unknown.

\subsection{A Symmetric Location Model}\label{sec:loc}

To motivate the proposed method for (\ref{eq:1dtoy}), we first study a very simple location model with symmetric error,
\begin{equation}
y_i=\beta+z_i,\label{eq:location}
\end{equation}
where $z_i\sim (1-\epsilon)N(0,1)+\epsilon Q_0$ for some distribution $Q_0$ that is symmetric around zero. In other words, $z_i\sim Q_0$ implies $-z_i\sim Q_0$. We emphasize that such a symmetry assumption is not assumed for the error distribution in (\ref{eq:1dtoy}).

Since $\beta$ is the center of a symmetric distribution regardless of what $Q_0$ is, a natural estimator for $\beta$ is the sample median $\wh{\beta}=\text{Median}(\{y_i\}_{i=1}^n)$, which achieves the error bound\footnote{The non-asymptotic bound follows the same analysis of sample median in \cite{chen2018robust}. The explicit constant $3.5$ can be obtained by assuming that $n(1-\epsilon)^2$ is sufficiently large. Moreover, the rate $\frac{1}{\sqrt{n(1-\epsilon)^2}}$ is minimax optimal given the existence of a symmetric distribution $Q_0$ such that $(1-\epsilon)N(0,1)+\epsilon Q_0=N(0,(1-\epsilon)^{-2})$ \citep{d2021consistent2}.}
$$\mathbb{P}\left(|\wh{\beta}-\beta|\leq 3.5\frac{1}{\sqrt{n(1-\epsilon)^2}}\right)\geq 0.95.$$
This high-probability error bound immediately implies that
\begin{equation}
\left[\wh{\beta}-3.5\frac{1}{\sqrt{n(1-\epsilon)^2}},\wh{\beta}+3.5\frac{1}{\sqrt{n(1-\epsilon)^2}}\right] \label{eq:median-eps}
\end{equation}
is a valid confidence interval that has coverage probability at least $0.95$. However, since (\ref{eq:median-eps}) depends on the knowledge of $\epsilon$, it cannot be used when $\epsilon$ is unknown.

For any $s\in[0,1]$, define the left and right empirical quantile functions by
\begin{eqnarray*}
q_n^-(s) &=& \inf\left\{t: \frac{1}{n}\sum_{i=1}^n\indi\{y_i\leq t\}\geq s\right\}, \\
q_n^+(s) &=& \sup\left\{t: \frac{1}{n}\sum_{i=1}^n\indi\{y_i< t\}\leq s\right\}.
\end{eqnarray*}
Then, the sample median can be written as $\wh{\beta}=\frac{q_n^-\left(\frac{1}{2}\right)+q_n^+\left(\frac{1}{2}\right)}{2}$. Instead of quantifying the uncertainty of the median as in (\ref{eq:median-eps}), we could directly quantify the uncertainty of the quantile level by considering the interval
\begin{equation}
\left[q_n^-\left(\frac{1}{2}-\sqrt{\frac{\log(2/\alpha)}{2n}}\right),q_n^+\left(\frac{1}{2}+\sqrt{\frac{\log(2/\alpha)}{2n}}\right)\right]. \label{eq:ci-quant}
\end{equation}
Interestingly, the interval (\ref{eq:ci-quant}) covers $\beta$ with probability at least $1-\alpha$ regardless of the value of $\epsilon$. Indeed, the event that the interval (\ref{eq:ci-quant}) covers the true $\beta$ in (\ref{eq:location}) is equivalent to
\begin{equation}
\frac{1}{n}\sum_{i=1}^n\indi\{z_i\leq 0\}\geq \frac{1}{2}-\sqrt{\frac{\log(2/\alpha)}{2n}}\quad\text{and}\quad\frac{1}{n}\sum_{i=1}^n\indi\{z_i<0\}\leq \frac{1}{2}+\sqrt{\frac{\log(2/\alpha)}{2n}}, \label{eq:coverage-event}
\end{equation}
which hold by Hoeffding's inequality using the symmetry of $z_i$. In fact, the random variable $\sum_{i=1}^n\indi\{z_i\leq 0\}$ is almost distribution free, since it follows $\text{Binomial}(n,1/2)$ unless $Q_0$ has a point mass at $0$. One can even replace the $\sqrt{\frac{\log(2/\alpha)}{2n}}$ in (\ref{eq:ci-quant}) with an appropriate quantity computed from the CDF of $\text{Binomial}(n,1/2)$ to achieve an almost exact coverage level $1-\alpha$.

The interval (\ref{eq:ci-quant}) was originally considered in \cite{scheffe1945non} as a nonparametric confidence interval for location. Though the definition of (\ref{eq:ci-quant}) does not depend on the knowledge of $\epsilon$, its length still adapts to the unknown $\epsilon$.

\begin{Proposition}\label{prop:loc-med}
Consider i.i.d. samples from the location model with error distribution $z_i\sim (1-\epsilon)N(0,1)+\epsilon Q_0$ for some distribution $Q_0$ that is symmetric around zero. 
For any $\alpha\in(0,1)$, there exist constants $c>0$ and $C>0$ depending on $\alpha$, such that the interval (\ref{eq:ci-quant}) covers $\beta$ with probability at least $1-\alpha$. Moreover, the length of the interval (\ref{eq:ci-quant}) is bounded by $\frac{C}{\sqrt{n(1-\epsilon)^2}}$ with probability at least $1-\alpha$, as long as $\frac{1}{\sqrt{n(1-\epsilon)^2}}\leq c$.
\end{Proposition}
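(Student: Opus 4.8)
The surrounding discussion already reduces the coverage event to (\ref{eq:coverage-event}), so the plan here is short. Writing $\delta_n = \sqrt{\log(2/\alpha)/(2n)}$ and using $y_i = \beta + z_i$ together with the definitions of $q_n^-$ and $q_n^+$, the event that the interval (\ref{eq:ci-quant}) covers $\beta$ is exactly the pair of inequalities in (\ref{eq:coverage-event}). Because $Q_0$ is symmetric about $0$, the mixture error $z_i$ satisfies $\mathbb{P}(z_i \le 0) \ge \tfrac12 \ge \mathbb{P}(z_i < 0)$, the gap being the atom of $z_i$ at $0$. I would therefore apply Hoeffding's inequality to each of the two i.i.d.\ Bernoulli averages, centered at its mean: a deviation of size $\delta_n$ occurs with probability at most $e^{-2n\delta_n^2} = \alpha/2$, so a union bound yields coverage at least $1-\alpha$.

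\textbf{Length: reduction to population quantiles.} For the length I would first pass from empirical to population quantiles via the Dvoretzky--Kiefer--Wolfowitz inequality. Let $G$ be the CDF of $y_i$ and $\wh{G}_n$ its empirical counterpart. Choosing the DKW deviation to equal $\delta_n$, the event $\sup_t |\wh{G}_n(t) - G(t)| \le \delta_n$ holds with probability at least $1-\alpha$. On this event, monotonicity of $G$ gives the sandwiching
$$q_n^-\!\left(\tfrac12 - \delta_n\right) \ge q^-\!\left(\tfrac12 - 2\delta_n\right), \qquad q_n^+\!\left(\tfrac12 + \delta_n\right) \le q^+\!\left(\tfrac12 + 2\delta_n\right),$$
where $q^\pm$ denote the corresponding population quantiles of $G$. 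Hence the length of (\ref{eq:ci-quant}) is at most $q^+(\tfrac12 + 2\delta_n) - q^-(\tfrac12 - 2\delta_n)$, and it remains to control this deterministic spread of the population quantiles around the median.

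\textbf{Length: the slope lower bound.} Translating so that $\beta = 0$, the error CDF is $F(t) = (1-\epsilon)\Phi(t) + \epsilon Q_0(t)$, and symmetry gives $F(0) \ge \tfrac12$ with median $0$. The key step---and the one I expect to be the main obstacle---is a lower bound on how fast $F$ rises near its median that degrades only through the factor $1-\epsilon$. For $t > 0$, since $Q_0$ is nondecreasing I can discard the contamination increment,
$$F(t) - \tfrac12 \ge F(t) - F(0) \ge (1-\epsilon)\big(\Phi(t) - \Phi(0)\big) \ge (1-\epsilon)\,\phi(1)\,t \quad\text{for } t \in [0,1],$$
using $\Phi(t) - \tfrac12 \ge \phi(1)\,t$ on $[0,1]$. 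Thus $F(t) \ge \tfrac12 + 2\delta_n$ as soon as $t \ge t^\ast := 2\delta_n/\big((1-\epsilon)\phi(1)\big)$, which forces $q^+(\tfrac12 + 2\delta_n) \le t^\ast$; the symmetric argument bounds $q^-(\tfrac12 - 2\delta_n) \ge -t^\ast$. The hypothesis $1/\sqrt{n(1-\epsilon)^2} \le c$ (equivalently, $\delta_n/(1-\epsilon)$ small) guarantees $t^\ast \le 1$, so the linear lower bound is applicable. Combining, the length is at most $2t^\ast = 4\delta_n/\big((1-\epsilon)\phi(1)\big) = C(\alpha)/\sqrt{n(1-\epsilon)^2}$, as claimed. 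The crux throughout is that the adversarial but symmetric $Q_0$ can only steepen $F$ near the median, so the surviving Gaussian mass $(1-\epsilon)\phi(0)$ alone controls the local slope and produces the $(1-\epsilon)^{-1}$ scaling.
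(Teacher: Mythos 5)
Your proposal is correct, and its coverage half is essentially the paper's own argument: reduce coverage of (\ref{eq:ci-quant}) to the event (\ref{eq:coverage-event}), use symmetry of $z_i$ to center the two Bernoulli averages, and apply Hoeffding (Lemma \ref{lm:Hoeffding}) with a union bound. The length half, however, takes a genuinely different route. The paper never passes to population quantiles: it argues by exclusion, showing that any candidate $\wt{\beta}$ with $\wt{\beta}-\beta>\frac{C}{2\sqrt{n(1-\epsilon)^2}}$ violates the defining inequalities of (\ref{eq:ci-quant}), by applying Hoeffding \emph{pointwise} to the empirical CDF evaluated at $\beta+\frac{C}{2\sqrt{n(1-\epsilon)^2}}$ (see (\ref{eq:fnf})) and then invoking the same slope bound you use, namely that the mixture CDF exceeds $\frac12+(1-\epsilon)\mathbb{P}\bigl(0<N(0,1)<t\bigr)$, so that for $C$ large the population gain $\asymp(1-\epsilon)\phi(1)\cdot\frac{C}{2\sqrt{n(1-\epsilon)^2}}$ dominates both the Hoeffding fluctuation $C_1/\sqrt{n}$ and the threshold $\delta_n$; monotonicity handles all other $\wt{\beta}$, and a symmetric argument handles the left side. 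You instead invoke the uniform DKW inequality, sandwich the empirical quantiles by population quantiles at levels $\frac12\pm2\delta_n$, and finish with a purely deterministic analysis of the population quantile spread. What each buys: your route cleanly isolates all randomness in one uniform event and then argues deterministically, at the cost of a slightly bigger hammer (DKW rather than pointwise Hoeffding) and a small technical point worth stating—DKW controls the right-continuous empirical CDF, so for $q_n^+$ you should note the bound transfers to the left-continuous version by taking left limits; the paper's route uses only tools it already has and follows the same exclusion-of-far-points template it reuses for Theorem \ref{thm:main-length}, which keeps the two proofs parallel. Both arguments share the identical crux: symmetry prevents $Q_0$ from flattening the CDF at the median, and the surviving Gaussian mass supplies slope $\gtrsim 1-\epsilon$, yielding length $O\bigl(\delta_n/(1-\epsilon)\bigr)=O\bigl(1/\sqrt{n(1-\epsilon)^2}\bigr)$ under the side condition $\frac{1}{\sqrt{n(1-\epsilon)^2}}\leq c$ that makes the linear lower bound on $\Phi$ applicable.
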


\subsection{Confidence Interval for Z-Estimation}\label{sec:z}

Scheffe and Tukey's interval (\ref{eq:ci-quant}) is specifically designed for the inference of population quantiles. To extend the idea to the regression setting (\ref{eq:1dtoy}), it is helpful to understand (\ref{eq:ci-quant}) from a more general perspective.

To this end, we define
$$Z_n(\beta)=\frac{1}{n}\sum_{i=1}^n\text{sign}(\beta-y_i).$$
The sample median can also be regarded as a Z-estimator, since it is an approximate solution to the equation $Z_n(\beta)=0$. Using the objective function of Z-estimation, we can write (\ref{eq:ci-quant}) as
\begin{equation}
\left\{\beta: -\sqrt{\frac{2\log(2/\alpha)}{n}}\leq Z_n(\beta)\leq \sqrt{\frac{2\log(2/\alpha)}{n}}\right\}. \label{eq:Z-CI}
\end{equation}
An advantage of the formula (\ref{eq:Z-CI}) is that it is applicable in settings beyond the one-dimensional location model (\ref{eq:location}), in particular when quantile function is not available. For example, for the regression model (\ref{eq:1dtoy}), one can simply use the same formula (\ref{eq:Z-CI}) with $Z_n(\beta)$ chosen to be an appropriate score function for regression. Construction of general confidence sets based on Z-estimation has recently been considered by \cite{chang2024confidence} even when the parameter of interest is multivariate. It turns out that this is a correct framework for us to derive robust confidence intervals for (\ref{eq:1dtoy}) when $\epsilon$ is unknown.

\subsection{Application in Robust Regression}\label{sec:ratio}

Return to the one-dimensional linear model (\ref{eq:1dtoy}). We will construct a confidence interval for $\beta$ using ideas developed in Section \ref{sec:loc} and Section \ref{sec:z}.

One straightforward way of analyzing (\ref{eq:1dtoy}) is to consider the ratio statistic
\begin{equation}
\frac{y_i}{x_i}=\beta+\frac{z_i}{x_i}. \label{eq:ratio}
\end{equation}
Since the covariate $x_i$ follows $N(0,1)$ and the noise variable $z_i$ is independent from $x_i$, the random variable $\frac{z_i}{x_i}$ is symmetric around zero regardless of the noise distribution. Therefore, (\ref{eq:ratio}) is an instance of the symmetric location model (\ref{eq:1dtoy}), where $\beta$ is the center of the distribution. Thus, the regression coefficient $\beta$ is identifiable even when $\epsilon=1$. When the distribution of $x_i$ is not necessarily $N(0,1)$ or not even symmetric, one could still deal with a linear model with symmetric design by preprocessing the data with pairwise difference $y_i-y_j=\beta(x_i-x_j)+z_i-z_j$.

With the reduction to a symmetric location model, the confidence interval (\ref{eq:ci-quant}) can be applied with quantile functions computed from the ratio statistics $\left\{y_i/x_i\right\}_{i=1}^n$. Similar to Proposition \ref{prop:loc-med}, the coverage and length properties of this construction continue to hold for the regression model (\ref{eq:1dtoy}). Unfortunately, when it comes to the multivariate setting (\ref{eq:multiv-lin}) with the presence of nuisance, the idea of taking ratio does not apply.

On the other hand, the framework of Z-estimation has more flexibility. A natural robust estimator for (\ref{eq:1dtoy}) is median regression which minimizes
\begin{equation}
\frac{1}{n}\sum_{i=1}^n|y-x_i\beta|.\label{eq:med-reg}
\end{equation}
The gradient of (\ref{eq:med-reg}) is given by
\begin{equation}
Z_n(\beta)=\frac{1}{n}\sum_{i=1}^n\text{sign}(x_i\beta-y_i)x_i.\label{eq:med-reg-g}
\end{equation}
Median regression can also be defined as a Z-estimator that solves the equation $Z_n(\beta)=0$. Similar to (\ref{eq:Z-CI}), one can use the interval
\begin{equation}
\left\{\beta: -\frac{C}{\sqrt{n}}\leq Z_n(\beta)\leq \frac{C}{\sqrt{n}}\right\}, \label{eq:Z-CI-med-reg}
\end{equation}
with the function (\ref{eq:med-reg-g}) for some constant $C$ that depends on the coverage probability. To choose $C$, we note that the event that the interval (\ref{eq:Z-CI-med-reg}) contains the true $\beta$ is equivalent to
$$\left|\frac{1}{n}\sum_{i=1}^n\text{sign}(z_i)x_i\right|\leq \frac{C}{\sqrt{n}}.$$
Since $\{x_i\}$ and $\{z_i\}$ are independent, the random variable $\frac{1}{n}\sum_{i=1}^n\text{sign}(z_i)x_i$ is stochastically dominated by $N(0,n^{-1})$, and thus the Gaussian quantile $C=\Phi^{-1}(1-\alpha/2)$ guarantees coverage. More generally, one could also take $C=\sqrt{\frac{2}{n}\sum_{i=1}^nx_i^2\log(2/\alpha)}$ so that the coverage guarantee holds beyond the Gaussian design by applying Hoeffding's inequality to the self-normalizing sum. 

\subsection{A Smooth Objective}

For the general multivariate linear model (\ref{eq:multiv-lin}),
in order to estimate $\beta$ or construct its confidence interval, our strategy is to reduce (\ref{eq:multiv-lin}) to the one-dimensional setting (\ref{eq:1dtoy}). This is implemented through a decorrelation technique to be introduced in Section \ref{sec:decorr}. Our technique will construct pairs $\{(\wt{x}_i,\wt{y}_i)\}_{i=1}^n$ from (\ref{eq:multiv-lin}) such that
\begin{equation}
\wt{y}_i\approx \beta \wt{x}_i+z_i.\label{eq:appr-lin}
\end{equation}
In other words, the pairs $\{(\wt{x}_i,\wt{y}_i)\}_{i=1}^n$ follow the linear model (\ref{eq:1dtoy}) approximately. When applying (\ref{eq:Z-CI-med-reg}) with $\{(\wt{x}_i,\wt{y}_i)\}_{i=1}^n$, one also needs to account for the model misspecification error in (\ref{eq:appr-lin}) for the coverage property. With the Z-estimation objective given by (\ref{eq:med-reg-g}), the effect of model misspecification error is very hard to control due to the nonsmoothness of $\text{sign}(\cdot)$.

To overcome this difficulty, we consider a smooth version of (\ref{eq:med-reg-g}), given by
\begin{equation}
\frac{1}{n}\sum_{i=1}^ng(x_i\beta-y_i)x_i, \label{eq:smooth-z}
\end{equation}
where $g(\cdot)$ is the hyperbolic tangent function defined by (\ref{eq:tanh}). We note that $g(\cdot)$ can be regarded as a smoothed sign function. The corresponding M-estimator of (\ref{eq:smooth-z}) minimizes
$$\frac{1}{n}\sum_{i=1}^n\log\cosh(y_i-x_i\beta),$$
where $\cosh(t)=\frac{e^t+e^{-t}}{2}$. The function $\log\cosh(\cdot)$ is very similar to Huber loss since $\log\cosh(t)\sim \frac{t^2}{2}$ as $t\rightarrow 0$ and $\log\cosh(t)\sim |t|$ as $t\rightarrow\infty$. Unlike Huber loss, $\log\cosh(\cdot)$ has bounded derivatives of all orders. Thus, when applying (\ref{eq:smooth-z}) to (\ref{eq:appr-lin}), the effect of model misspecification can be controlled by Taylor expansion.

\begin{Proposition}\label{prop:cov-gaus}
Consider i.i.d. samples $\{(x_i,y_i)\}_{i=1}^n$ generated from the linear model (\ref{eq:1dtoy}), with $x_i\sim N(0,1)$ and $z_i\sim (1-\epsilon)N(0,1)+\epsilon Q$ independent from each other. For any $\alpha\in(0,1)$, there exist constants $c>0$ and $C>0$ depending on $\alpha$, such that the interval
\begin{equation}
\left\{\beta: -\sqrt{\frac{2\log(2/\alpha)}{n}}\leq \frac{\frac{1}{n}\sum_{i=1}^ng(x_i\beta-y_i)x_i}{\sqrt{\frac{1}{n}\sum_{i=1}^nx_i^2}}\leq \sqrt{\frac{2\log(2/\alpha)}{n}}\right\},\label{eq:Z-CI-med-reg-tau}
\end{equation}
covers $\beta$ with probability at least $1-\alpha$. Moreover, the length of the interval (\ref{eq:Z-CI-med-reg-tau}) is bounded by $\frac{C}{\sqrt{n(1-\epsilon)^2}}$ with probability at least $1-\alpha$, as long as $\frac{1}{\sqrt{n(1-\epsilon)^2}}\leq c$.
\end{Proposition}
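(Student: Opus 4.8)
Write $\bar G_n(\beta)=\big(\tfrac1n\sum_{i=1}^n g(x_i\beta-y_i)x_i\big)\big/\sqrt{\tfrac1n\sum_{i=1}^n x_i^2}$ and $\tau=\sqrt{2\log(2/\alpha)/n}$, so that the interval in \eqref{eq:Z-CI-med-reg-tau} is $\{\beta:|\bar G_n(\beta)|\le\tau\}$. The plan rests on two structural facts. First, since $g'(t)=1-\tanh^2(t)\in(0,1]$, the numerator has derivative $\tfrac1n\sum_i g'(x_i\beta-y_i)x_i^2>0$, so $\bar G_n$ is continuous and strictly increasing; hence the interval is exactly $[\wh\beta_L,\wh\beta_R]$ with $\bar G_n(\wh\beta_L)=-\tau$ and $\bar G_n(\wh\beta_R)=\tau$, and coverage reduces to controlling $\bar G_n$ at the truth. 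Second, substituting $y_i=\beta x_i+z_i$ and using that $g$ is odd, $\sqrt n\,\bar G_n(\beta)=-\sum_i g(z_i)x_i/\sqrt{\sum_i x_i^2}$.

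\textbf{Coverage.} I would condition on $\cF=\sigma(z_i,|x_i|:i\in[n])$. Since $x_i\sim N(0,1)$ is symmetric and independent of $z_i$, we may write $x_i=\eta_i|x_i|$ with $\eta_i=\mathrm{sign}(x_i)$ a Rademacher sign independent of $(|x_i|,z_i)$; thus $\sum_i g(z_i)x_i=\sum_i\tilde\eta_i r_i$, where $r_i:=|g(z_i)|\,|x_i|$ is $\cF$-measurable and $\tilde\eta_i=\mathrm{sign}(g(z_i))\eta_i$ is, conditionally on $\cF$, still Rademacher. Hoeffding's MGF bound together with a Chernoff argument using the $\cF$-measurable choice $\lambda=t/\sqrt{\sum_i r_i^2}$ gives $\mathbb{P}(|\sum_i g(z_i)x_i|\ge t\sqrt{\sum_i r_i^2}\mid\cF)\le 2e^{-t^2/2}$. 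Because $|g|\le1$ forces $\sum_i r_i^2=\sum_i g(z_i)^2x_i^2\le\sum_i x_i^2$, we get $\sqrt n\,|\bar G_n(\beta)|\le|\sum_i g(z_i)x_i|/\sqrt{\sum_i r_i^2}$, whence $\mathbb{P}(\sqrt n\,|\bar G_n(\beta)|\ge\sqrt{2\log(2/\alpha)})\le\alpha$. This is exactly coverage, and it uses nothing about $Q$.

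\textbf{Length.} Both endpoints will be localized to within $C/\sqrt{n(1-\epsilon)^2}$ of $\beta$ through a uniform lower bound on the slope of $\bar G_n$ plus the mean value theorem. Parametrizing $\beta+t$ and using $x_i(\beta+t)-y_i=tx_i-z_i$, the derivative is $\bar G_n'(\beta+t)=\big(\tfrac1n\sum_i g'(tx_i-z_i)x_i^2\big)/\sqrt{\tfrac1n\sum_i x_i^2}$. On the constant window $|t|\le\delta_0:=1/(2K)$, I restrict the nonnegative sum to ``good'' clean indices with $|z_i|\le1/2$ and $1/2\le|x_i|\le K$: on these $|tx_i-z_i|\le1$ for \emph{every} such $t$, so $g'(tx_i-z_i)\ge 1-\tanh^2(1)$ and $x_i^2\ge1/4$, giving $\tfrac1n\sum_i g'(tx_i-z_i)x_i^2\ge\tfrac{1-\tanh^2(1)}{4}\cdot\tfrac1n|\{\text{good }i\}|$ simultaneously in $t$. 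The number of good indices is a sum of independent Bernoullis with mean $(1-\epsilon)p_0$ for a constant $p_0>0$, so a multiplicative Chernoff bound yields $\tfrac1n|\{\text{good}\}|\ge\tfrac12(1-\epsilon)p_0$ with probability $\ge1-e^{-c_3 n(1-\epsilon)}$, exceeding $1-\alpha$ under the hypothesis $1/\sqrt{n(1-\epsilon)^2}\le c$. Combined with $\sqrt{\tfrac1n\sum_i x_i^2}\lesssim1$ (chi-square concentration), this gives $\bar G_n'(\beta+t)\ge c_2(1-\epsilon)$ for all $|t|\le\delta_0$. On the coverage event $\bar G_n(\beta)\ge-\tau$, for $\beta^*=\beta+C'/\sqrt{n(1-\epsilon)^2}$ (which lies in the window once $c$ is small) we obtain $\bar G_n(\beta^*)\ge\bar G_n(\beta)+c_2(1-\epsilon)(\beta^*-\beta)\ge-\tau+c_2C'/\sqrt n>\tau$ as soon as $C'>2\sqrt{2\log(2/\alpha)}/c_2$; by monotonicity $\wh\beta_R\le\beta^*$. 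The symmetric computation bounds $\beta-\wh\beta_L$, and a union bound over the one-sided tail events for $\bar G_n(\beta)$ and the derivative event delivers $\wh\beta_R-\wh\beta_L\le C/\sqrt{n(1-\epsilon)^2}$ with probability $\ge1-\alpha$ after adjusting constants.

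\textbf{Main obstacle.} The delicate point is the uniform-in-$\beta$ lower bound $\tfrac1n\sum_i g'(tx_i-z_i)x_i^2\gtrsim(1-\epsilon)$. Robustness against arbitrary $Q$ is automatic because the summands are nonnegative, so discarding every contaminated point only shrinks the sum; the real work is to make the window $\delta_0$ a \emph{constant}, so that a single index-wise inequality holds for all $t$ at once and no empirical-process/chaining argument is needed, and to confirm that the binomial count of good clean indices concentrates, which is precisely what the sample-size condition $1/\sqrt{n(1-\epsilon)^2}\le c$ guarantees.
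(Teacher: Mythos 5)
Your proposal is correct, and its two halves line up with the paper's proof at the architectural level, but the length argument takes a noticeably different (and more elementary) technical route, so a comparison is warranted. For coverage you condition on $(z_i,|x_i|)$ and apply Hoeffding to the Rademacher-weighted self-normalized sum, using $\sum_i g(z_i)^2x_i^2\leq\sum_i x_i^2$; this is exactly the symmetrization argument the paper uses in Lemma~\ref{lem:self}, which its proof of the proposition simply invokes. For length, the paper (via the argument of Theorem~\ref{thm:main-length}) also exploits monotonicity of the Z-function and the coverage bound at the truth, but lower-bounds the increment $|G_n(\wt\beta)-G_n(\beta)|$ through the curvature inequality \eqref{eq:curve-g}, namely $(g(t+\Delta)-g(t))\Delta\gtrsim(|\Delta|^2\wedge|\Delta|)\indi\{|t|\leq1\}$, applied only at the two points $\wt\beta=\beta\pm r$, combined with the empirical bound $\frac{1}{n}\sum_i(|x_i|\wedge x_i^2)\indi\{|z_i|\leq1\}\gtrsim 1-\epsilon$ (the univariate analogue of part 3 of Lemma~\ref{lem:for-main}). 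You instead lower-bound the \emph{derivative} $\bar G_n'$ uniformly over a constant window by truncating to ``good'' indices with $|z_i|\leq 1/2$ and $1/2\leq|x_i|\leq K$, and then apply the mean value theorem. Both routes rest on the same robustness mechanism --- the summands are nonnegative, so contaminated points can be discarded and clean points with bounded noise supply the $(1-\epsilon)$ factor via a Chernoff count --- and your use of the hypothesis $1/\sqrt{n(1-\epsilon)^2}\leq c$ both to make the binomial count concentrate and to fit $\beta^*$ inside the constant window is sound. What each approach buys: yours is self-contained and avoids any $|x|\wedge x^2$ weighting or sub-exponential/net arguments, at the cost of the covariate truncation and the window restriction (harmless here since only small $r$ matters); the paper's curvature inequality needs no window, handles all increment sizes at once through the $\wedge$ structure, and is the form that survives the passage to the multivariate setting, where the misspecification term $(\gamma-\wh\gamma)^Tw_i$ sits inside $g(\cdot)$ and a uniform-in-direction bound (Lemma~\ref{lem:for-main}) replaces the simple binomial count.
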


\subsection{Why is Adaptation Possible?}\label{sec:test-inv}

The paper \cite{luo2024adaptive} considers confidence interval construction for
\begin{equation}
y_1,\cdots, y_n\overset{iid}\sim (1-\epsilon)N(\beta,1)+\epsilon Q, \label{eq:huber-mean}
\end{equation}
where $Q$ is some arbitrary contamination distribution. This model can be regarded as (\ref{eq:location}) without the symmetry condition on the error distribution. While the optimal length of confidence interval is of order $\frac{1}{\sqrt{n}}+\epsilon$ when $\epsilon$ is known, \cite{luo2024adaptive} proves that the optimal length slows down to $\frac{1}{\sqrt{\log(n)}}+\frac{1}{\sqrt{\log(1/\epsilon)}}$ when $\epsilon$ is unknown.

The adaptation cost caused by unknown $\epsilon$ in the setting of (\ref{eq:huber-mean}) can be explained by the difficulty of solving the following hypothesis testing problem,
\begin{equation}\label{eq:test-loc}
	\begin{split} &\begin{array}{l}
		H_{0}: y_1,\cdots, y_n \overset{iid}\sim P\in\left\{(1-\epsilon_{\max})N(\beta,1)+\epsilon_{\max} Q: Q\right\},
	\end{array}\\
	&\begin{array}{l}
		H_{1}: y_1,\cdots, y_n \overset{iid}\sim P\in\left\{(1-\epsilon)N(\beta+r,1)+\epsilon Q: Q\right\},
	\end{array}
	\end{split}
\end{equation}
where $\epsilon_{\max}$ is some constant that is strictly smaller than $1/2$. It was shown by \cite{luo2024adaptive} that an adaptive confidence interval under (\ref{eq:huber-mean}) directly leads to a solution to (\ref{eq:test-loc}). To be specific, one can reject the null whenever $\beta$ is not contained in the interval. Then, as long as the parameter $r$ under the alternative is greater than the interval length, both Type-1 and Type-2 errors can be controlled. Consequently, the smallest $r$ such that (\ref{eq:test-loc}) can be solved serves as a lower bound for the length of adaptive confidence interval under (\ref{eq:huber-mean}). When $r\lesssim \frac{1}{\sqrt{\log(n)}}+\frac{1}{\sqrt{\log(1/\epsilon)}}$, \cite{luo2024adaptive} showed that there exist $Q_0$ and $Q_1$ such that
$$(1-\epsilon_{\max})N(\beta,1)+\epsilon_{\max} Q_0\approx (1-\epsilon)N(\beta+r,1)+\epsilon Q_1$$
in the sense that the total variation distance between the two product distributions is sufficiently small; it is impossible to distinguish null from alternative.

Analogously, there is also a hypothesis testing problem associated with the construction of adaptive confidence intervals under (\ref{eq:1dtoy}). Let us use $P_{\epsilon,\beta,Q}$ for the joint distribution of $(x_i,y_i)$ under the linear model (\ref{eq:1dtoy}). Recall that $x_i\sim N(0,1)$ and $z_i\sim (1-\epsilon)N(0,1)+\epsilon Q$, and they are independent from each other. The testing problem is
\begin{equation}\label{eq:test-reg}
	\begin{split} &\begin{array}{l}
		H_{0}: (x_1,y_1),\cdots, (x_n,y_n) \overset{iid}\sim P\in\left\{P_{1,\beta,Q}:Q\right\},
	\end{array}\\
	&\begin{array}{l}
		H_{1}: (x_1,y_1),\cdots, (x_n,y_n) \overset{iid}\sim P\in\left\{P_{\epsilon,\beta+r,Q}:Q\right\}.
	\end{array}
	\end{split}
\end{equation}
Here, we can take $\epsilon_{\max}=1$ since $\beta$ is identifiable in (\ref{eq:1dtoy}). Unlike (\ref{eq:test-loc}), there exists a testing function for (\ref{eq:test-reg}) with small Type-1 and Type-2 error as long as $r\gtrsim \frac{1}{\sqrt{n(1-\epsilon)^2}}$, which exactly matches the minimax rate of estimating $\beta$ under (\ref{eq:1dtoy}).

It turns out that an informative testing statistic is the function $Z_n(\beta)=\frac{1}{n}\sum_{i=1}^n\text{sign}(x_i\beta-y_i)x_i$ in (\ref{eq:med-reg-g}). Under null, $Z_n(\beta)=-\frac{1}{n}\sum_{i=1}^n\text{sign}(z_i)x_i$ so that it has zero expectation. Under alternative, $Z_n(\beta)=-\frac{1}{n}\sum_{i=1}^n\text{sign}(x_ir+z_i)x_i$, where $z_i\sim (1-\epsilon)N(0,1)+\epsilon Q$. Thus, the expectation of $Z_n(\beta)$ under alternative is at most
$$-(1-\epsilon)\mathbb{E}_{x,z\overset{iid}\sim N(0,1)}\left[\text{sign}(xr+z)x\right]\lesssim -(1-\epsilon)r.$$
Then, by rejecting the null whenever $Z_n(\beta)< -\frac{C}{\sqrt{n}}$, one can solve the testing problem (\ref{eq:test-reg}) as long as the difference between means under null and alternative exceeds the noise level, which is $(1-\epsilon)r\gtrsim \frac{1}{\sqrt{n}}$, equivalent to the condition $r\gtrsim \frac{1}{\sqrt{n(1-\epsilon)^2}}$. Similarly, the testing statistic (\ref{eq:smooth-z}) also works under the same condition.

\cite{luo2024adaptive} also shows that any testing procedure solving (\ref{eq:test-reg}) can be inverted into an adaptive confidence interval for (\ref{eq:1dtoy}) by collecting all $\beta$ such that the null hypothesis in (\ref{eq:test-reg}) is not rejected. The rejection region $Z_n(\beta)< -\frac{C}{\sqrt{n}}$ then immediately leads to the one-sided interval $\left\{\beta: Z_n(\beta)\geq -\frac{C}{\sqrt{n}}\right\}$. By replacing the $\beta+r$ in the alternative of (\ref{eq:test-reg}) with $\beta-r$, one can also obtain $\left\{\beta: Z_n(\beta)\leq \frac{C}{\sqrt{n}}\right\}$ in a similar way. The intersection of the two intervals recovers (\ref{eq:Z-CI-med-reg}). Using (\ref{eq:smooth-z}) instead of $Z_n(\beta)$ also recovers (\ref{eq:Z-CI-med-reg-tau}).

\section{Extension to Multivariate Regression}\label{sec:decorr}

For the general multivariate setting (\ref{eq:multiv-lin}), our main idea to construct an adaptive confidence interval for $\beta$ is a reduction from (\ref{eq:multiv-lin}) to (\ref{eq:1dtoy}) so that the methodology introduced in Section \ref{sec:CI} can be applied.

\subsection{Decorrelating Nuisance Parameters}

One naive way of regarding (\ref{eq:multiv-lin}) as a univariate model is to treat $\theta^Tw_i+z_i$ as a noise variable and directly apply (\ref{eq:Z-CI-med-reg-tau}) to the pairs $\{(x_i,y_i)\}_{i=1}^n$. However, there are two problems with this approach. First, $\theta^Tw_i+z_i$ is not independent from $x_i$. Second, the variance of $\theta^Tw_i+z_i$ when $z_i$ is not contaminated is $O(\|\theta\|^2+\sigma^2)$, which can be arbitrarily large.

To address these two issues, Algorithm \ref{alg:CI} considers the transformation
\begin{eqnarray}
\label{eq:x-tilde} \wt{x}_i &=& x_i-\wh{\alpha}^Tw_i, \\
\label{eq:y-tilde} \wt{y}_i &=& y_i-\wh{\gamma}^Tw_i,
\end{eqnarray}
so that (\ref{eq:multiv-lin}) can be equivalently written as
\begin{equation}
\wt{y}_i = \beta\wt{x}_i + (\gamma-\wh{\gamma})^Tw_i + z_i, \label{eq:alg-rearran}
\end{equation}
where $\gamma$ stands for the vector $\beta\wh{\alpha}+\theta$. Since $\wh{\alpha}$ in (\ref{eq:x-tilde}) is the least-squares estimator computed from regressing $\{x_i\}_{i=1}^n$ on $\{w_i\}_{i=1}^n$, it leads to the following identity
\begin{equation}
\frac{1}{n}\sum_{i=1}^n\wt{x}_iw_i=0. \label{eq:sample-uncorr}
\end{equation}
Even though $\wt{x}_i$ and $(\gamma-\wh{\gamma})^Tw_i+z_i$ are still not independent in (\ref{eq:alg-rearran}), the independence between $\wt{x}_i$ and $z_i$ and the uncorrelatedness between $\wt{x}_i$ and $w_i$ are sufficient for us to apply (\ref{eq:Z-CI-med-reg-tau}). We emphasize that the uncorrelatedness condition (\ref{eq:sample-uncorr}) is only in the sample level, which turns out to be particularly convenient when bounding the model misspecification error. Moreover, the $\wh{\gamma}$ in (\ref{eq:y-tilde}) is a robust estimator for $\gamma=\beta\wh{\alpha}+\theta$, which will ensure that the scale of the noise variable $(\gamma-\wh{\gamma})^Tw_i+z_i$ is well controlled.

Applying the formula (\ref{eq:smooth-z}) to $\{(\wt{x}_i,\wt{y}_i)\}_{i=1}^n$, we define
\begin{equation}
G_n(\beta)=\frac{1}{n}\sum_{i=1}^ng\left(\wt{x}_i\beta-\wt{y}_i\right)\wt{x}_i. \label{eq:G-y-x-tile}
\end{equation}
Similar to (\ref{eq:Z-CI-med-reg-tau}), we use the confidence interval
\begin{equation}
\left\{\beta: -1.5\sqrt{\frac{\log(2/\alpha)}{n}}\leq \frac{G_n(\beta)}{\sqrt{\frac{1}{n}\sum_{i=1}^n\wt{x}_i^2}}\leq 1.5\sqrt{\frac{\log(2/\alpha)}{n}}\right\}, \label{eq:main-CI}
\end{equation}
which is exactly the interval $[\wh{\beta}_L,\wh{\beta}_R]$ computed by Algorithm \ref{alg:CI}. The constant $1.5$ in (\ref{eq:main-CI}) is chosen to be slightly greater than the $\sqrt{2}$ used in (\ref{eq:Z-CI-med-reg-tau}) to account for the model misspecification caused by $(\gamma-\wh{\gamma})^Tw_i$.

\subsection{Coverage}\label{sec:cov}

In order that the interval (\ref{eq:main-CI}) still has the coverage property, we need to understand the behavior of $G_n(\beta)$ when $\beta$ is the true parameter that generates the data. Plugging (\ref{eq:alg-rearran}) into (\ref{eq:G-y-x-tile}), we obtain
$$G_n(\beta)=-\frac{1}{n}\sum_{i=1}^ng\left((\gamma-\wh{\gamma})^Tw_i + z_i\right)\wt{x}_i.$$
To analyze the effect of model misspecification caused by $(\gamma-\wh{\gamma})^Tw_i$, we use the smoothness of $g(\cdot)$ and have
\begin{equation}
-G_n(\beta) - \frac{1}{n}\sum_{i=1}^ng\left(z_i\right)\wt{x}_i = \frac{1}{n}\sum_{i=1}^ng'\left(z_i\right)(\gamma-\wh{\gamma})^Tw_i\wt{x}_i + \frac{1}{2n}\sum_{i=1}^ng''(\xi_i)|(\gamma-\wh{\gamma})^Tw_i|^2\wt{x}_i, \label{eq:taylor}
\end{equation}
where $\xi_i$ is some number between $(\gamma-\wh{\gamma})^Tw_i + z_i$ and $z_i$. We will separately bound the two terms on the right hand side of (\ref{eq:taylor}).

For the first term, we have
\begin{eqnarray}
\nonumber && \left|\frac{1}{n}\sum_{i=1}^ng'\left(z_i\right)(\gamma-\wh{\gamma})^Tw_i\wt{x}_i\right| \\
\label{eq:use-uncorr} &=& \left|\frac{1}{n}\sum_{i=1}^n\left(g'\left(z_i\right)-\mathbb{E}g'\left(z_i\right)\right)(\gamma-\wh{\gamma})^Tw_i\wt{x}_i\right| \\
\nonumber &\leq& \|\wh{\gamma}-\gamma\|\left\|\frac{1}{n}\sum_{i=1}^n\left(g'\left(z_i\right)-\mathbb{E}g'\left(z_i\right)\right)\wt{x}_iw_i\right\| \\
\label{eq:l2-norm} &\lesssim& \|\wh{\gamma}-\gamma\|\sqrt{\frac{p}{n}},
\end{eqnarray}
where the last bound holds with high probability. The equality (\ref{eq:use-uncorr}) uses the identity $\frac{1}{n}\sum_{i=1}^n\left(\mathbb{E}g'\left(z_i\right)\right)\wt{x}_iw_i=0$, which is by (\ref{eq:sample-uncorr}) and the fact that $\{z_i\}_{i=1}^n$ are identically distributed. The last step (\ref{eq:l2-norm}) is by Lemma \ref{lem:for-main}, which bounds the $\ell_2$ norm of a $p$-dimensional vector by $O_{\mathbb{P}}\left(\sqrt{\frac{p}{n}}\right)$ since each of its entry is of order $O_{\mathbb{P}}(n^{-1/2})$.

For the second term, we have
\begin{eqnarray}
\nonumber && \left|\frac{1}{2n}\sum_{i=1}^ng''(\xi_i)|(\gamma-\wh{\gamma})^Tw_i|^2\wt{x}_i\right| \\
\label{eq:g''} &\leq& \frac{1}{2n}\sum_{i=1}^n |(\gamma-\wh{\gamma})^Tw_i|^2|\wt{x}_i| \\
\nonumber &\leq& \frac{\|\wh{\gamma}-\gamma\|^2}{2}\opnorm{\frac{1}{n}\sum_{i=1}^n|\wt{x}_i|w_iw_i^T} \\
\label{eq:opbd} &\lesssim& \|\wh{\gamma}-\gamma\|^2,
\end{eqnarray}
with high probability. The inequality (\ref{eq:g''}) is by the fact $\|g''\|_{\infty}\leq 1$ and the operator norm bound used in  (\ref{eq:opbd}) is by Lemma \ref{lem:for-main}.

Combining the bounds for the two terms on the right hand side of (\ref{eq:taylor}), we can write down the following result.
\begin{Lemma}\label{lem:decor}
Under the setting of Theorem \ref{thm:main}, for any constant $\delta\in(0,1)$, there exist some constants $c>0$ and $C>0$ depending on $\delta$ such that
$$|G_n(\beta)|\leq\left|\frac{1}{n}\sum_{i=1}^ng\left(z_i\right)\wt{x}_i\right|+ C\left(\|\wh{\gamma}-\gamma\|\sqrt{\frac{p}{n}} + \|\wh{\gamma}-\gamma\|^2\right),$$
with probability at least $1-\delta$ as long as $\frac{p^2}{n(1-\epsilon)^4}\leq c$.
\end{Lemma}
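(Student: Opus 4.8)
The plan is to turn the heuristic computation in \eqref{eq:taylor}--\eqref{eq:opbd} into a rigorous high-probability bound via a second-order Taylor expansion. First I would substitute the rearranged model \eqref{eq:alg-rearran} into the definition \eqref{eq:G-y-x-tile} to obtain the exact identity $G_n(\beta)=-\frac{1}{n}\sum_{i=1}^n g\big((\gamma-\wh{\gamma})^Tw_i+z_i\big)\wt{x}_i$. Writing $h_i:=(\gamma-\wh{\gamma})^Tw_i$ and Taylor-expanding the analytic function $g$ around $z_i$ with a Lagrange remainder gives $g(z_i+h_i)=g(z_i)+g'(z_i)h_i+\tfrac12 g''(\xi_i)h_i^2$ for some $\xi_i$ between $z_i$ and $z_i+h_i$, which is precisely \eqref{eq:taylor}. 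A single triangle inequality then reduces the lemma to bounding the two remainder terms, leaving the leading term $\frac1n\sum_i g(z_i)\wt{x}_i$ untouched (it is exactly what the coverage argument controls separately).

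For the first-order term I would exploit that the noise vector $\{z_i\}$ is independent of the entire design $\{(x_i,w_i)\}$, so $g'(z_i)$ is independent of the design-measurable quantities $\wt{x}_i$ and $w_i$, and the $z_i$ are identically distributed. This lets me center each $g'(z_i)$ by its common mean $\mathbb{E} g'(z_1)$: the subtracted piece vanishes identically since $\frac1n\sum_i(\mathbb{E} g'(z_1))\wt{x}_iw_i=(\mathbb{E} g'(z_1))\cdot\frac1n\sum_i\wt{x}_iw_i=0$ by the sample-level uncorrelatedness identity \eqref{eq:sample-uncorr}. Applying Cauchy--Schwarz pulls out $\|\wh{\gamma}-\gamma\|$ and leaves the $\ell_2$ norm of the $p$-dimensional vector $\frac1n\sum_i(g'(z_i)-\mathbb{E} g'(z_1))\wt{x}_iw_i$. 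Conditionally on the design this is a sum of independent, mean-zero, bounded ($\|g'\|_\infty\le 1$) summands, so each of its $p$ coordinates concentrates at rate $n^{-1/2}$; invoking Lemma \ref{lem:for-main} to assemble the coordinates yields the overall bound $O_{\mathbb{P}}(\sqrt{p/n})$ and hence $\|\wh{\gamma}-\gamma\|\sqrt{p/n}$ for this term.

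For the second-order term I would discard $g''(\xi_i)$ using $\|g''\|_\infty\le 1$ and rewrite $\frac1n\sum_i|h_i|^2|\wt{x}_i|=(\gamma-\wh{\gamma})^T\big(\frac1n\sum_i|\wt{x}_i|w_iw_i^T\big)(\gamma-\wh{\gamma})$, which is at most $\|\wh{\gamma}-\gamma\|^2\,\opnorm{\frac1n\sum_i|\wt{x}_i|w_iw_i^T}$. A second application of Lemma \ref{lem:for-main} shows this operator norm is $O_{\mathbb{P}}(1)$, giving the bound $O_{\mathbb{P}}(\|\wh{\gamma}-\gamma\|^2)$. Combining the two bounds, and allocating the failure probability $\delta$ across the (at most two) concentration events supplied by Lemma \ref{lem:for-main} under the scaling condition $\frac{p^2}{n(1-\epsilon)^4}\le c$, produces the stated inequality.

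I expect the first-order term to be the main obstacle. The delicate points there are that the centering step genuinely requires the \emph{sample}-level orthogonality \eqref{eq:sample-uncorr} (this is why $\wt{x}_i$ is the OLS residual rather than a population decorrelation), that the resulting vector concentration must hold \emph{uniformly} over all contamination distributions $Q$ in \eqref{eq:noise-sig}, and that verifying the per-coordinate $n^{-1/2}$ rate and summing $p$ of them into the $\sqrt{p/n}$ bound is exactly the content of Lemma \ref{lem:for-main}, whose hypotheses are what force the condition $\frac{p^2}{n(1-\epsilon)^4}\le c$. The second-order term, by contrast, is routine once the operator-norm control is in hand.
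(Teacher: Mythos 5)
Your proposal is correct and follows essentially the same route as the paper's own proof: the identity $G_n(\beta)=-\frac{1}{n}\sum_{i=1}^n g\bigl((\gamma-\wh{\gamma})^Tw_i+z_i\bigr)\wt{x}_i$, the Taylor expansion \eqref{eq:taylor}, the centering of $g'(z_i)$ by its mean justified by the sample-level orthogonality \eqref{eq:sample-uncorr}, Cauchy--Schwarz to decouple the data-dependent vector $\wh{\gamma}-\gamma$, the $\|g''\|_\infty\leq 1$ and operator-norm treatment of the quadratic remainder, and finally Lemma~\ref{lem:for-main} to control both the $\ell_2$-norm and operator-norm quantities with the stated probability. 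You also correctly identify the two genuinely delicate points (that the centering needs the sample rather than population orthogonality, and that the uniform-in-$Q$ concentration is exactly what Lemma~\ref{lem:for-main} supplies), so nothing is missing.
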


Therefore, to guarantee coverage, it suffices to show that
\begin{equation}
\left|\frac{1}{n}\sum_{i=1}^ng\left(z_i\right)\wt{x}_i\right|\leq 1.45\sqrt{\frac{1}{n}\sum_{i=1}^n\wt{x}_i^2}\sqrt{\frac{\log(2/\alpha)}{n}}, \label{eq:cov-general}
\end{equation}
and
\begin{equation}
C\left(\|\wh{\gamma}-\gamma\|\sqrt{\frac{p}{n}} + \|\wh{\gamma}-\gamma\|^2\right) \leq \frac{1}{20}\sqrt{\frac{1}{n}\sum_{i=1}^n\wt{x}_i^2}\sqrt{\frac{\log(2/\alpha)}{n}}. \label{eq:bias-control-est}
\end{equation}
The first bound (\ref{eq:cov-general}) can be established in a similar way to Proposition \ref{prop:cov-gaus} using concentration of self-normalizing sums. The second bound (\ref{eq:bias-control-est}) is implied by the following estimation error bound.
\begin{Lemma}\label{lem:estimation-huber}
Under the setting of Theorem \ref{thm:main}, for any constant $\delta\in(0,1)$, there exist some constants $c>0$ and $C>0$ depending on $\delta$ such that
$$\|\wh{\gamma}-\gamma\|^2\leq  C\frac{p}{n(1-\epsilon)^2},$$
with probability at least $1-\delta$ as long as $\frac{p^2}{n(1-\epsilon)^4}\leq c$.
\end{Lemma}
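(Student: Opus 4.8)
The plan is to recognize the pair $(\wh{\beta},\wh{\gamma})$ produced by the joint minimization in line 3 of Algorithm \ref{alg:CI} as the Huber M-estimator for the linear model $\wt{y}_i=\beta\wt{x}_i+\gamma^Tw_i+z_i$ with design vector $D_i=(\wt{x}_i,w_i^T)^T\in\mathbb{R}^p$ and true coefficient $b=(\beta,\gamma^T)^T$. Since $\norm{\wh{\gamma}-\gamma}\leq\norm{\wh{b}-b}$, it suffices to establish the standard robust regression bound $\norm{\wh{b}-b}^2\lesssim \frac{p}{n(1-\epsilon)^2}$, which also recovers the known rate of \cite{d2021consistent,d2021consistent2} on the residualized design. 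Writing $L(b')=\frac{1}{n}\sum_{i=1}^n\rho(\wt{y}_i-D_i^Tb')$ and $u=\wh{b}-b$, the residual at the truth is $\wt{y}_i-D_i^Tb=z_i$, and the argument rests on the two classical pillars of M-estimation: an upper bound on $\norm{\nabla L(b)}$ and a lower bound on the curvature of $L$ (restricted strong convexity).

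For the gradient, note $\nabla L(b)=-\frac{1}{n}\sum_{i=1}^n\psi(z_i)D_i$ with $\psi=\rho'$ satisfying $\norm{\psi}_\infty\leq 2$. Because $z_i$ is independent of the design and $\psi$ is bounded, a covering-number argument over $S_{p-1}$ together with the sub-Gaussianity of $D_i$ (inherited from \prettyref{as:bounded moments} and the closeness of $\wh\alpha$ to its population value, which follows from ordinary least squares applied to the uncontaminated covariates) yields $\norm{\nabla L(b)}=O_{\mathbb{P}}(\sqrt{p/n})$. This is the step where the factor $\sqrt{p/n}$, rather than $\sqrt{p/(n(1-\epsilon)^2)}$, enters.

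The crux is the curvature bound. Using $\rho''(t)=2\,\indc{|t|<1}$ and a second-order expansion, the excess loss obeys
$$L(b+u)-L(b)-\iprod{\nabla L(b)}{u}\;\gtrsim\;\frac{1}{n}\sum_{i:\,|z_i|\leq 1/2,\ |D_i^Tu|\leq 1/2}\bpar{D_i^Tu}^2.$$
Only uncontaminated samples with $z_i\sim N(0,1)$ can be guaranteed to satisfy $|z_i|\leq 1/2$, and a constant fraction of them do; this is the mechanism producing the $(1-\epsilon)$ factor. Restricting to the localized region $\norm{u}\leq c_0$ (legitimate because the target rate is small under $\frac{p^2}{n(1-\epsilon)^4}\leq c$), the event $|D_i^Tu|\leq 1/2$ holds for most $i$, and the sample-level decorrelation \eqref{eq:sample-uncorr} splits the quadratic form as $a^2\frac{1}{n}\sum_i\wt{x}_i^2+\frac{1}{n}\sum_i(v^Tw_i)^2$ for $u=(a,v^T)^T$, whose lower bound is controlled by part (2) of \prettyref{as:bounded moments}. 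Making this lower bound uniform over all directions via a peeling/Rademacher argument produces curvature $\gtrsim (1-\epsilon)\norm{u}^2$. Combining with the basic inequality $L(\wh{b})\leq L(b)$ gives $(1-\epsilon)\norm{u}^2\lesssim\norm{\nabla L(b)}\norm{u}\lesssim\sqrt{p/n}\,\norm{u}$, hence $\norm{u}^2\lesssim\frac{p}{n(1-\epsilon)^2}$, and the condition $\frac{p^2}{n(1-\epsilon)^4}\leq c$ guarantees the attained error lies inside the localization radius, closing the loop.

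The main obstacle is the restricted strong convexity under arbitrary $\epsilon$-contamination: one must secure a lower bound on the localized quadratic term that both survives adversarial corruption of an $\epsilon$-fraction of the $z_i$ and degrades by exactly the factor $(1-\epsilon)$ rather than something worse. Achieving uniformity of this bound over all directions $u$, while simultaneously keeping the localization radius large enough to contain the final error yet small enough for the quadratic region of $\rho$ to apply, is the delicate part, and it is where the sample decorrelation \eqref{eq:sample-uncorr} and the sub-Gaussian, non-degenerate design conditions of \prettyref{as:bounded moments} do the essential work.
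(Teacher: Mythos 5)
Your skeleton (gradient bound plus restricted strong convexity plus the basic inequality $L(\wh{b})\leq L(b)$) is a legitimate alternative to what the paper does --- the paper instead works from the first-order condition $\frac{1}{n}\sum_{i=1}^n\rho'(y_i-\wt{X}_i^T\wh{\eta})\wt{X}_i=0$ together with the monotonicity inequality $(\rho'(t+\Delta)-\rho'(t))\Delta\geq c(|\Delta|^2\wedge|\Delta|)\indi\{|t|\leq 1\}$ --- and your gradient bound and your mechanism for the factor $1-\epsilon$ (restricting to uncontaminated samples with small $|z_i|$) match Lemma \ref{lem:for-gamma}. However, your curvature step has a genuine flaw. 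You invoke the sample decorrelation \eqref{eq:sample-uncorr} to split the restricted quadratic form $\frac{1}{n}\sum_{i\in S}(D_i^Tu)^2$, with $S=\{i:|z_i|\leq 1/2,\ |D_i^Tu|\leq 1/2\}$, into $a^2\frac{1}{n}\sum_i\wt{x}_i^2+\frac{1}{n}\sum_i(v^Tw_i)^2$. But the identity $\frac{1}{n}\sum_{i=1}^n\wt{x}_iw_i=0$ holds only for the sum over \emph{all} $i$; on the restricted set $S$ the cross term $\frac{2a}{n}\sum_{i\in S}\wt{x}_i(v^Tw_i)$ does not vanish, and since $S$ itself depends on $u$ (through the indicator $|D_i^Tu|\leq 1/2$, which is not even continuous in $u$), controlling it uniformly over directions is exactly the hard empirical-process problem, not a consequence of \eqref{eq:sample-uncorr}. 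The paper never splits: it observes that $\wc{X}_i=(x_i-\alpha^Tw_i,w_i^T)^T$ is a fixed, bounded linear image of $X_i$ and hence itself satisfies \prettyref{as:bounded moments}; it then proves $\inf_{\|v\|=1}\frac{1}{n}\sum_{i=1}^n(|v^T\wc{X}_i|^2\wedge|v^T\wc{X}_i|)\indi\{|z_i|\leq 1\}\gtrsim 1-\epsilon$ by a covering argument and Bernstein's inequality over the index set $\{i: I_i=1,\ |z_i|\leq 0.5\}$, which is independent of the design, and finally transfers from $\wc{X}_i$ to $\wt{X}_i$ using $\|\wh{\alpha}-\alpha\|\lesssim\sqrt{p/n}$.

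A second, smaller gap: your localization is circular as stated (``legitimate because the target rate is small \ldots\ guarantees the attained error lies inside the localization radius''). It can be repaired by convexity of the Huber loss (show the excess loss is strictly positive on the sphere $\|u\|=c_0$, so the minimizer must lie inside), but the paper's route makes localization unnecessary: because its lower bound has the form $(1-\epsilon)\bigl(\|u\|^2\wedge\|u\|\bigr)$, large errors are ruled out by the linear branch, and one concludes directly that $(1-\epsilon)(\|u\|\wedge 1)\lesssim\sqrt{p/n}$, which gives the lemma once $\frac{p}{n(1-\epsilon)^2}$ is small. More generally, the hard truncation $|D_i^Tu|\leq 1/2$ in your restricted-strong-convexity formulation is what creates both difficulties; replacing the truncated quadratic by $|D_i^Tu|^2\wedge|D_i^Tu|$, as the paper does, removes the $u$-dependent index set entirely and makes the uniform lower bound amenable to a covering argument.
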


With Lemma \ref{lem:estimation-huber}, the bound (\ref{eq:bias-control-est}) holds as long as $\frac{p^2}{n(1-\epsilon)^4}$ is sufficiently small. By Lemma \ref{lem:decor}, we immediately obtain the coverage guarantee of the confidence interval.
\begin{Theorem}\label{thm:main-cov}
Under the setting of Theorem \ref{thm:main}, for any constant $\alpha\in(0,1)$, the interval $[\wh{\beta}_L,\wh{\beta}_R]$ computed by Algorithm \ref{alg:CI} satisfies
$$\mathbb{P}\left(\beta\in [\wh{\beta}_L,\wh{\beta}_R]\right) \geq 1-\alpha,$$
as long as $\frac{p^2}{n(1-\epsilon)^4}\leq c$ for some $c>0$ depending on $\alpha$.
\end{Theorem}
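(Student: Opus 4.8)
The plan is to reduce the coverage claim to a single concentration bound for the estimating function $G_n$ of \eqref{eq:G-y-x-tile} evaluated at the true $\beta$, and then to split that quantity into a noise-only ``oracle'' piece and a decorrelation bias, controlled respectively by a self-normalized argument in the spirit of \prettyref{prop:cov-gaus} and by \prettyref{lem:decor}--\prettyref{lem:estimation-huber}. First I would exploit monotonicity of the estimating equation: since $g=\tanh$ has $g'=\mathrm{sech}^2\ge 0$, the map $\beta\mapsto G_n(\beta)$ is continuous and nondecreasing, with limits $\pm\frac1n\sum_{i=1}^n|\wt x_i|$ as $\beta\to\pm\infty$. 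Hence, for $n$ large enough that the constant band lies strictly inside this range, the set $\{\beta:\,|G_n(\beta)|\le 1.5\sqrt{\tfrac1n\sum_i\wt x_i^2}\sqrt{\log(2/\alpha)/n}\}$ is exactly the interval \eqref{eq:main-CI} returned by \prettyref{alg:CI}, with finite endpoints. Consequently the coverage event is \emph{equivalent} to the single inequality $|G_n(\beta)|\le 1.5\sqrt{\tfrac1n\sum_i\wt x_i^2}\sqrt{\log(2/\alpha)/n}$ at the data-generating $\beta$, so it suffices to bound $|G_n(\beta)|$ there.

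Plugging \eqref{eq:alg-rearran} into \eqref{eq:G-y-x-tile} and invoking \prettyref{lem:decor} reduces the problem to the two bounds \eqref{eq:cov-general} and \eqref{eq:bias-control-est}, whose right-hand sides sum to the budget, since $1.45+\tfrac1{20}=1.5$. The bias bound \eqref{eq:bias-control-est} follows from \prettyref{lem:estimation-huber}: substituting $\|\wh\gamma-\gamma\|^2\lesssim \frac{p}{n(1-\epsilon)^2}$ gives $\|\wh\gamma-\gamma\|\sqrt{p/n}+\|\wh\gamma-\gamma\|^2\lesssim \frac{p}{n(1-\epsilon)^2}$ (using $1-\epsilon\le1$), and together with $\frac1n\sum_i\wt x_i^2\asymp1$ -- which holds with high probability by the nondegeneracy clause of \prettyref{as:bounded moments}, as in \prettyref{lem:for-main} -- this is at most $\tfrac1{20}\sqrt{\tfrac1n\sum_i\wt x_i^2}\sqrt{\log(2/\alpha)/n}\asymp n^{-1/2}$ exactly when $\frac{p^2}{n(1-\epsilon)^4}\le c$ for a small constant $c=c(\alpha)$. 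This is the only point at which the dimension condition enters.

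The crux is the oracle bound \eqref{eq:cov-general}, controlling $\frac1n\sum_i g(z_i)\wt x_i$ \emph{uniformly over all $Q$}. The obstacle is that $z_i$ need not be symmetric, so $\mathbb E g(z_i)$ may be of order $\epsilon$, while $\wt x_i$ is a data-dependent residual rather than an i.i.d.\ symmetric variable; a crude centering would leave a term $\mathbb E g(z_i)\cdot\frac1n\sum_i\wt x_i$ too large for the $n^{-1/2}$ budget. The device that rescues the argument -- the multivariate analogue of the sign trick behind \prettyref{prop:cov-gaus} -- is a sign-flip invariance: replacing each covariate $X_i=(x_i,w_i^T)^T$ by $\eta_i X_i$ for arbitrary signs $\eta_i\in\{\pm1\}$ leaves $\wh\alpha=(\sum_i w_iw_i^T)^{-1}\sum_i w_ix_i$ unchanged (because $\eta_i^2=1$), sends $\wt x_i\mapsto\eta_i\wt x_i$, and fixes both $z_i$ and $\sum_i\wt x_i^2$. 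By the symmetry clause of \prettyref{as:bounded moments} and independence across $i$ one has $(X_1,\dots,X_n)\overset{d}{=}(\eta_1 X_1,\dots,\eta_n X_n)$, so the joint law of $(\{z_i\},\{\wt x_i\})$ is invariant under these flips. Mixing over Rademacher $s_i$ independent of the data then gives $\frac{\sum_i g(z_i)\wt x_i}{\sqrt{\sum_i\wt x_i^2}}\overset{d}{=}\frac{\sum_i g(z_i)s_i\wt x_i}{\sqrt{\sum_i\wt x_i^2}}$; conditioning on $(\{z_i\},\{\wt x_i\})$ turns the right-hand side into a weighted Rademacher sum with coefficients $c_i=g(z_i)\wt x_i/\sqrt{\sum_j\wt x_j^2}$ obeying $\sum_i c_i^2\le1$ (as $|g|\le1$), and Hoeffding's inequality with threshold $1.45\sqrt{\log(2/\alpha)}$ yields \eqref{eq:cov-general} with failure probability $2(\alpha/2)^{1.45^2/2}$, strictly below $\alpha$ for every $\alpha\in(0,1)$.

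Finally I would assemble the estimates. On the intersection of the oracle event \eqref{eq:cov-general}, the events of \prettyref{lem:decor} and \prettyref{lem:estimation-huber}, and $\frac1n\sum_i\wt x_i^2\asymp1$, \prettyref{lem:decor} gives $|G_n(\beta)|\le(1.45+\tfrac1{20})\sqrt{\tfrac1n\sum_i\wt x_i^2}\sqrt{\log(2/\alpha)/n}$, which is precisely the coverage event by the first step. Choosing the failure probability $\delta$ of the auxiliary lemmas equal to the positive slack $\alpha-2(\alpha/2)^{1.45^2/2}$ -- this strict slack, created by taking the oracle budget $1.45>\sqrt2$, is exactly what leaves room for the bias events -- and absorbing all dependence into constants $c,C(\alpha)$ gives $\mathbb P(\beta\in[\wh\beta_L,\wh\beta_R])\ge1-\alpha$ whenever $\frac{p^2}{n(1-\epsilon)^4}\le c$. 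The main obstacle throughout is the oracle step: without the sign-flip invariance, the arbitrary asymmetric contamination would destroy parametric-rate concentration of the estimating function.
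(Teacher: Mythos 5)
Your proposal is correct and follows essentially the same route as the paper: it reduces coverage to bounding $|G_n(\beta)|$ at the true $\beta$, invokes Lemma~\ref{lem:decor} and Lemma~\ref{lem:estimation-huber} for the decomposition and bias control, and handles the oracle term $\frac{1}{n}\sum_{i=1}^n g(z_i)\wt{x}_i$ by the same sign-flip invariance of $\wh{\alpha}$ plus conditional Hoeffding argument that constitutes the paper's proof of Lemma~\ref{lem:self}, with the identical budget split $1.45+\tfrac{1}{20}=1.5$. The only cosmetic difference is that you phrase the symmetrization as distributional invariance under covariate sign flips followed by mixing over Rademacher signs, whereas the paper phrases it as independence of the flipped covariates $X_i^s=\xi_iX_i$ from the signs $\xi_i$; these are the same argument.
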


\subsection{Length}\label{sec:length}

To bound the length of the confidence interval (\ref{eq:main-CI}), it suffices to show for any $\wt{\beta}\in\mathbb{R}$ such that $|\wt{\beta}-\beta|>r$, $\wt{\beta}$ does not belong to the set (\ref{eq:main-CI}) in the sense that
\begin{equation}
\left|\frac{G_n(\wt{\beta})}{\sqrt{\frac{1}{n}\sum_{i=1}^n\wt{x}_i^2}}\right|>1.5\sqrt{\frac{\log(2/\alpha)}{n}}.\label{eq:l-todo}
\end{equation}
This implies that the interval length is at most $2r$.
By triangle inequality,
\begin{equation}
|G_n(\wt{\beta})| \geq |-G_n(\wt{\beta})+G_n(\beta)|-|G_n(\beta)|.\label{eq:l-tria}
\end{equation}
It suffices to lower bound $|-G_n(\wt{\beta})+G_n(\beta)|$ and upper bound $|G_n(\beta)|$. The latter is directly implied by the coverage guarantee that $\beta$ belongs to (\ref{eq:main-CI}). That is,
\begin{equation}
|G_n(\beta)|\lesssim \frac{1}{\sqrt{n}}.\label{eq:l-tria-2}
\end{equation}

To lower bound $|-G_n(\wt{\beta})+G_n(\beta)|$, we note that the function $g(\cdot)$ satisfies the inequality
\begin{equation}
(g(t+\Delta)-g(t))\Delta \geq c \left(|\Delta|^2\wedge |\Delta|\right)\indi\{|t|\leq 1\}. \label{eq:curve-g}
\end{equation}
To see why (\ref{eq:curve-g}) is true, let us consider the two cases $|\Delta|\leq 1$ and $|\Delta|> 1$ separately. By symmetry, we could assume $\Delta\geq 0$ without loss of generality. When $|\Delta|\leq 1$, the function $g(\cdot)$ is locally linear so that $g(t+\Delta)-g(t)\gtrsim \Delta$. When $|\Delta|>1$, the monotonicity of $g(\cdot)$ implies $g(t+\Delta)-g(t)\gtrsim 1$.

Applying (\ref{eq:curve-g}) to the difference $|(-G_n(\wt{\beta})+G_n(\beta))(\beta-\wt{\beta})|$, we have
\begin{eqnarray}
\nonumber && |(-G_n(\wt{\beta})+G_n(\beta))(\beta-\wt{\beta})| \\
\nonumber &=& \left|\frac{1}{n}\sum_{i=1}^n\left(g\left((\gamma-\wh{\gamma})^Tw_i + z_i+\wt{x}_i(\beta-\wt{\beta})\right)-g\left((\gamma-\wh{\gamma})^Tw_i + z_i\right)\right)\wt{x}_i(\beta-\wt{\beta})\right| \\
\nonumber&\geq& c\frac{1}{n}\sum_{i=1}^n\left(|\wt{x}_i(\wt{\beta}-\beta)|^2\wedge |\wt{x}_i(\wt{\beta}-\beta)|\right)\indi\left\{|(\gamma-\wh{\gamma})^Tw_i+z_i|\leq 1\right\} \\
\label{eq:improved-later} &\geq& c\left(|\wt{\beta}-\beta|\wedge |\wt{\beta}-\beta|^2\right) \frac{1}{n}\sum_{i=1}^n(|\wt{x}_i|\wedge\wt{x}_i^2)\indi\left\{|(\gamma-\wh{\gamma})^Tw_i+z_i|\leq 1\right\}.
\end{eqnarray}
By Lemma \ref{lem:for-main}, we have
\begin{equation}
\frac{1}{n}\sum_{i=1}^n(|\wt{x}_i|\wedge\wt{x}_i^2)\indi\left\{|(\gamma-\wh{\gamma})^Tw_i+z_i|\leq 1\right\} \gtrsim 1-\epsilon, \label{eq:lower-noise-1-eps}
\end{equation}
with high probability. Intuitively, with the norm of $\gamma-\wh{\gamma}$ controlled by Lemma \ref{lem:estimation-huber}, the random variable $\indi\left\{|(\gamma-\wh{\gamma})^Tw_i+z_i|\leq 1\right\}$ has expectation at least of order $1-\epsilon$ by (\ref{eq:noise-sig}), and thus (\ref{eq:lower-noise-1-eps}) holds. Combining (\ref{eq:improved-later}) and (\ref{eq:lower-noise-1-eps}), we have
\begin{equation}
|-G_n(\wt{\beta})+G_n(\beta)| \gtrsim (1-\epsilon)\left(1\wedge |\wt{\beta}-\beta|\right).
\end{equation}
Together with (\ref{eq:l-tria}) and (\ref{eq:l-tria-2}), the inequality (\ref{eq:l-todo}) holds as long as $|\wt{\beta}-\beta|\gtrsim \frac{1}{\sqrt{n(1-\epsilon)^2}}$.

\begin{Theorem}\label{thm:main-length}
Under the setting of Theorem \ref{thm:main}, for any constant $\alpha\in(0,1)$, there exist some constants $c>0$ and $C>0$ depending on $\alpha$ such that the interval $[\wh{\beta}_L,\wh{\beta}_R]$ computed by Algorithm \ref{alg:CI} satisfies
$$\mathbb{P}\left(\wh{\beta}_R-\wh{\beta}_L\leq \frac{C}{\sqrt{n(1-\epsilon)^2}}\right) \geq 1-\alpha,$$
as long as $\frac{p^2}{n(1-\epsilon)^4}\leq c$.
\end{Theorem}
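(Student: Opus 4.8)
The plan is to show that the confidence set \eqref{eq:main-CI} is contained in $[\beta-r,\beta+r]$ for a radius $r\asymp \frac{1}{\sqrt{n(1-\epsilon)^2}}$, which forces $\wh\beta_R-\wh\beta_L\le 2r$ and gives the claimed bound with $C=2r\sqrt{n(1-\epsilon)^2}$. The first observation is that $G_n(\beta)$ is nondecreasing in $\beta$, since $g'(t)=1-g(t)^2\ge 0$ implies $\partial_\beta G_n(\beta)=\frac1n\sum_{i=1}^n g'(\wt x_i\beta-\wt y_i)\wt x_i^2\ge 0$. Because the normalizer $\sqrt{\frac1n\sum_i\wt x_i^2}$ does not depend on $\beta$, the test statistic in \eqref{eq:main-CI} is monotone and the set is a genuine interval. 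Consequently it suffices to verify the exclusion criterion \eqref{eq:l-todo} at the two points $\wt\beta=\beta\pm r$, and monotonicity then propagates it to every $\wt\beta$ with $|\wt\beta-\beta|>r$.

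The second step is to control the two pieces produced by the triangle inequality \eqref{eq:l-tria}. The term $|G_n(\beta)|$ is bounded on the coverage event of Theorem~\ref{thm:main-cov}: there $\beta$ lies in \eqref{eq:main-CI}, so
\[
|G_n(\beta)|\le 1.5\sqrt{\tfrac{\log(2/\alpha)}{n}}\,\sqrt{\tfrac1n\textstyle\sum_{i=1}^n\wt x_i^2}\lesssim \tfrac1{\sqrt n},
\]
where I use that $\frac1n\sum_i\wt x_i^2=\Theta(1)$ with high probability, a consequence of the sub-Gaussianity and non-degeneracy in Assumption~\ref{as:bounded moments} together with Lemma~\ref{lem:for-main}. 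For the gap $|G_n(\wt\beta)-G_n(\beta)|$, I invoke the curvature inequality \eqref{eq:curve-g} followed by the noise-activation bound \eqref{eq:lower-noise-1-eps}, exactly as in \eqref{eq:improved-later}, to obtain
\[
|G_n(\wt\beta)-G_n(\beta)|\gtrsim (1-\epsilon)\,(1\wedge|\wt\beta-\beta|)=(1-\epsilon)\,r,
\]
valid once $r\le 1$, which is guaranteed under $\frac{p^2}{n(1-\epsilon)^4}\le c$ for $c$ small.

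The third step is assembly. Dividing by the $\Theta(1)$ normalizer and combining the two displays gives
\[
\frac{|G_n(\wt\beta)|}{\sqrt{\frac1n\sum_{i=1}^n\wt x_i^2}}\;\gtrsim\;(1-\epsilon)\,r-\frac{C'}{\sqrt n}.
\]
Choosing the constant in $r=\frac{C''}{\sqrt{n(1-\epsilon)^2}}$ large enough makes the right-hand side exceed $1.5\sqrt{\log(2/\alpha)/n}\asymp\frac1{\sqrt n}$, which is precisely where the scaling $r\asymp\frac1{\sqrt{n(1-\epsilon)^2}}$ enters: one needs $(1-\epsilon)r\gtrsim\frac1{\sqrt n}$. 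This establishes \eqref{eq:l-todo} at $\wt\beta=\beta\pm r$, hence for all $|\wt\beta-\beta|>r$, and thus the length bound. I would finish by union-bounding the high-probability events used along the way (coverage from Theorem~\ref{thm:main-cov}, the estimation bound of Lemma~\ref{lem:estimation-huber}, and the concentration statements of Lemma~\ref{lem:for-main}) to produce overall probability at least $1-\alpha$.

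The main obstacle is the noise-activation lower bound \eqref{eq:lower-noise-1-eps}, which is where the sharp $(1-\epsilon)$ factor, and therefore the optimal rate, comes from. The difficulty is that the indicator $\indi\{|(\gamma-\wh\gamma)^Tw_i+z_i|\le 1\}$ couples the data-dependent estimate $\wh\gamma$, the arbitrarily contaminated noise $z_i$, and (through $\wt x_i=x_i-\wh\alpha^Tw_i$) the data-dependent $\wh\alpha$. I would argue as follows: restrict to the clean indices where $z_i\sim N(0,1)$, which are a fraction $\gtrsim 1-\epsilon$ and on which $|z_i|\le \frac12$ holds with constant probability; use Lemma~\ref{lem:estimation-huber} to ensure $\|\wh\gamma-\gamma\|$ is small enough that $|(\gamma-\wh\gamma)^Tw_i|\le\frac12$ for most such indices, keeping the indicator active; and use the non-degeneracy clause of Assumption~\ref{as:bounded moments} to lower-bound $\mathbb{E}(|\wt x_i|\wedge\wt x_i^2)$ away from zero on the activated set. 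The coupling to the estimated $\wh\alpha,\wh\gamma$ must then be removed by a uniform concentration argument over the relevant low-dimensional classes, which is exactly the role of Lemma~\ref{lem:for-main}; making this uniformity quantitative under the regime $\frac{p^2}{n(1-\epsilon)^4}\le c$ is the crux of the estimate, while the remaining steps are routine assembly via monotonicity and the triangle inequality.
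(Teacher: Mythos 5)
Your proposal is correct and follows essentially the same route as the paper's proof: monotonicity of $G_n(\cdot)$ to reduce the exclusion criterion \eqref{eq:l-todo} to the two points $\beta\pm r$, the triangle inequality \eqref{eq:l-tria} with $|G_n(\beta)|\lesssim 1/\sqrt{n}$ from the coverage result (Theorem \ref{thm:main-cov}), the curvature bound \eqref{eq:curve-g} combined with the noise-activation bound \eqref{eq:lower-noise-1-eps} via \eqref{eq:improved-later} to get $|G_n(\wt\beta)-G_n(\beta)|\gtrsim(1-\epsilon)(1\wedge r)$, and a final union bound. Your closing sketch of how to establish \eqref{eq:lower-noise-1-eps} (clean indices, Lemma \ref{lem:estimation-huber}, non-degeneracy, uniform concentration) also mirrors how the paper proves it in the third item of Lemma \ref{lem:for-main}.
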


\subsection{Linear Functional}\label{sec:linearf}

Algorithm \ref{alg:CI} can also be applied to robust confidence interval construction of a general linear functional $u^Tb$ in the linear model (\ref{eq:mul-lin-comp}). The only modification is an additional rotation of the design matrix before applying Algorithm \ref{alg:CI}. Let $V=(v_1,\cdots,v_p)\in\mathbb{R}^{p\times p}$ be an orthonormal matrix with the first column given by $v_1=u/\|u\|$. Then, the regression model $X_i^Tb$ can be written as
$$X_i^Tb=X_i^T\left(VV^T\right)b=\frac{1}{\|u\|^2}X_i^Tu u^Tb+ \sum_{j=2}^pX_i^Tv_j v_j^Tb.$$
Then, the output of Algorithm \ref{alg:CI} with the same $y_i$ and the new covariates
$$\left(\frac{1}{\|u\|^2}X_i^Tu, X_i^Tv_2,\cdots,X_i^Tv_p\right)^T$$
is a valid robust confidence interval of $u^Tb$.

\section{Numerical Experiments}\label{sec:num}

\subsection{Numerical Setups}\label{sec:setupnum}

In this section, we evaluate the performance of Algorithm \ref{alg:CI}, together with several competitors, in the following synthetic data sets. The observations $\{(y_i,X_i)\}_{i=1}^n$ are generated according to the linear model $y_i=X_i^Tb+z_i$. The covariates are sampled from $X_i\overset{iid}\sim N(0,\Sigma)$ with $\Sigma_{jk}=0.6^{|j-k|}$. We consider two types of noise distributions:
\begin{enumerate}
\item \textbf{Gaussian}: $z_i\overset{iid}\sim (1-\epsilon)N(0,1)+\epsilon\left(\frac{1}{2}N(10^2,10^4)+\frac{1}{2}N(10^4,10^8)\right)$.
\item \textbf{Cauchy}: $z_i\overset{iid}\sim (1-\epsilon)C(0,1)+\epsilon\left(\frac{1}{2}N(10^2,10^4)+\frac{1}{2}N(10^4,10^8)\right)$.
\end{enumerate}
The Cauchy distribution $C(0,1)$ has density $\frac{1}{\pi(1+t^2)}$, and thus the second setting above involves both heavy tail and outliers. We set $p=20$ and vary $n\in\{200,1000\}$ and $\epsilon\in[0,0.8]$. Due to translation invariance of the linear model, we set $b=0$ without loss of generality.

The threshold level $1.5\sqrt{\log(2/\alpha)}$  used in Algorithm \ref{alg:CI} is to guarantee non-asymptotic coverage in Theorem \ref{thm:main}. One may wonder whether replacing $1.5\sqrt{\log(2/\alpha)}$  with the more aggressive Gaussian quantile $\Phi^{-1}(1-\alpha/2)$ works better in practice. While the Gaussian approximation works in the asymptotics of $\frac{p^2}{n(1-\epsilon)^4}\rightarrow 0$, our numerical results indicate that it should be used with caution when the sample size $n$ is small or $\epsilon$ is large. In addition to this variation of Algorithm \ref{alg:CI}, we also implement several methods in the literature. This includes the residual bootstrap (RB) of \cite{freedman1981bootstrapping}, the restructured regression (RR) of \cite{zhu2018linear}, the cyclic permutation test (CPT) of \cite{lei2021assumption}, the convex hull method (HulC) of \cite{kuchibhotla2024hulc} and the residual permutation test (RPT) of \cite{wen2025residual}. We also report the performance of the classical confidence interval computed from the t-statistic centering at the ordinary least-squares estimator (OLS). Among the above methods, RR \citep{zhu2018linear}, CPT \citep{lei2021assumption} and RPT \citep{wen2025residual} are originally proposed to test the null hypothesis that a coefficient is zero, which also lead to confidence intervals by inverting the tests.

\subsection{Implementation Details}

The residual bootstrap \citep{freedman1981bootstrapping} is implemented with Huber regression
\begin{equation}
\min_b\frac{1}{n}\sum_{i=1}^n\rho(y_i-X_i^Tb),\label{eq:ghr}
\end{equation}
where $\rho(\cdot)$ is the same Huber loss used in Algorithm \ref{alg:CI}. The bootstrap sample size is taken as $200$.
The restructured regression method has two versions in \cite{zhu2018linear}, and we implement the one with the knowledge of the design covariance.
The cyclic permutation test \citep{lei2021assumption} is implemented with the recommended preordering computed using a genetic algorithm. The genetic algorithm is
computed with $1000$ random samples. The HulC method \citep{kuchibhotla2024hulc} is also implemented with the Huber regression (\ref{eq:ghr}), together with a recommended randomized choice of sample splitting to prevent the interval from being overly conservative. The residual permutation test \citep{wen2025residual} is implemented with the more powerful version using the empirical p-values, and the tuning parameters are set as recommended in the original work.

\subsection{Results and Discussion} \label{sec:res and dis}

We report the coverage and length properties of confidence interval construction for the first regression coefficient with level $1-\alpha=0.95$. Figure \ref{fig:coverage} reports the coverage probabilities, and Figure \ref{fig:length} reports the average lengths of the intervals with coverage. Each point in the plots is computed from $500$ independent experiments. Besides RB, RR, CPT, HulC, RPT and OLS, our methods are denoted as Alg1 for the original version of Algorithm \ref{alg:CI} and Alg1-GA for the variant of Algorithm \ref{alg:CI} with $1.5\sqrt{\log(2/\alpha)}$  replaced by $\Phi^{-1}(1-\alpha/2)$ using Gaussian approximation. Since CPT requires $n/p\geq \alpha^{-1}-1$ \citep{lei2021assumption}, it is not included in the setting with $n=200$ and $p=20$.

\begin{figure}[ht]
  \centering
  \includegraphics[width=\linewidth]{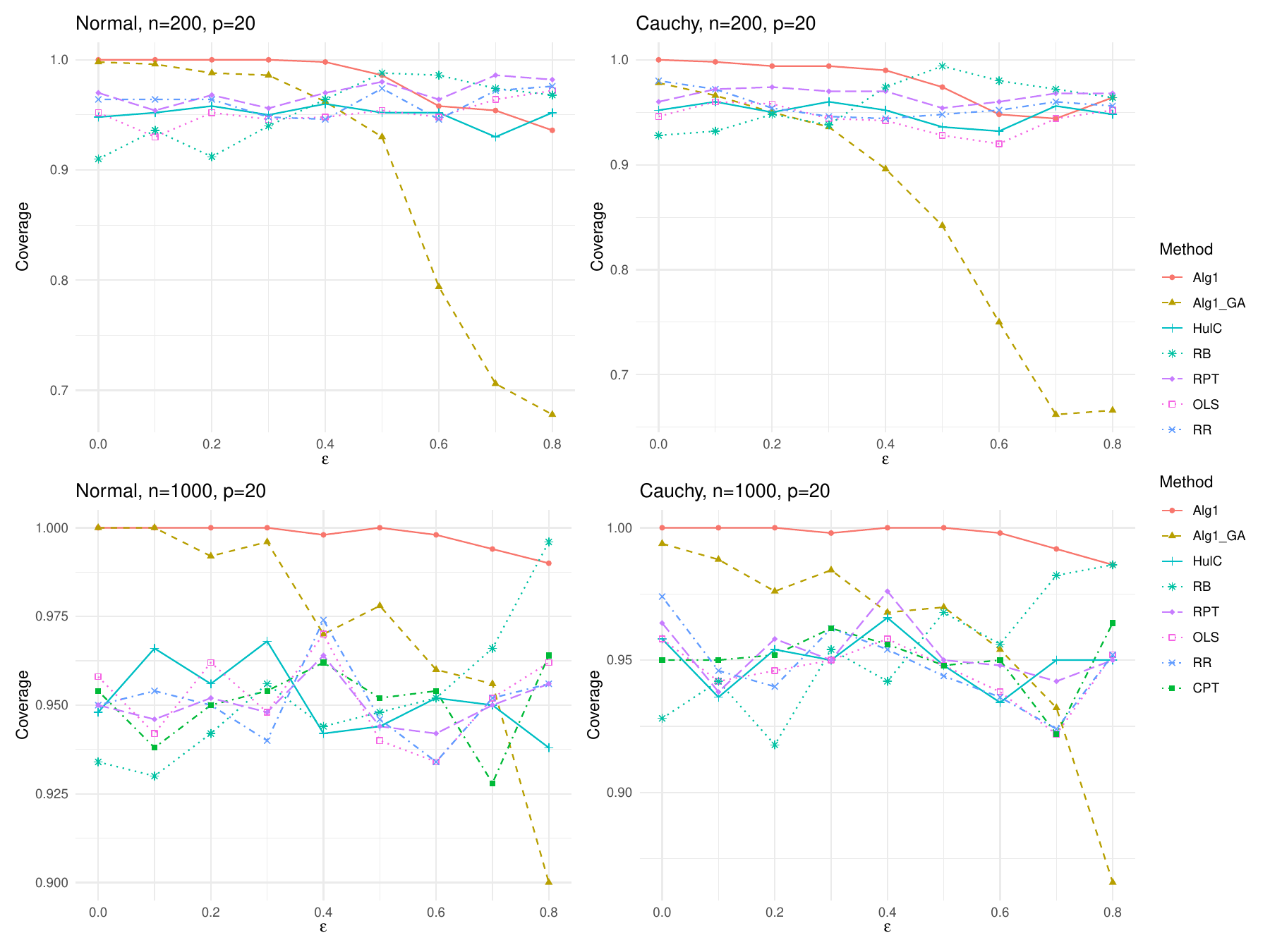}
  \caption{Coverage against $\epsilon$ across settings.}
  \label{fig:coverage}
\end{figure}

\begin{figure}[ht]
  \centering
  \includegraphics[width=\linewidth]{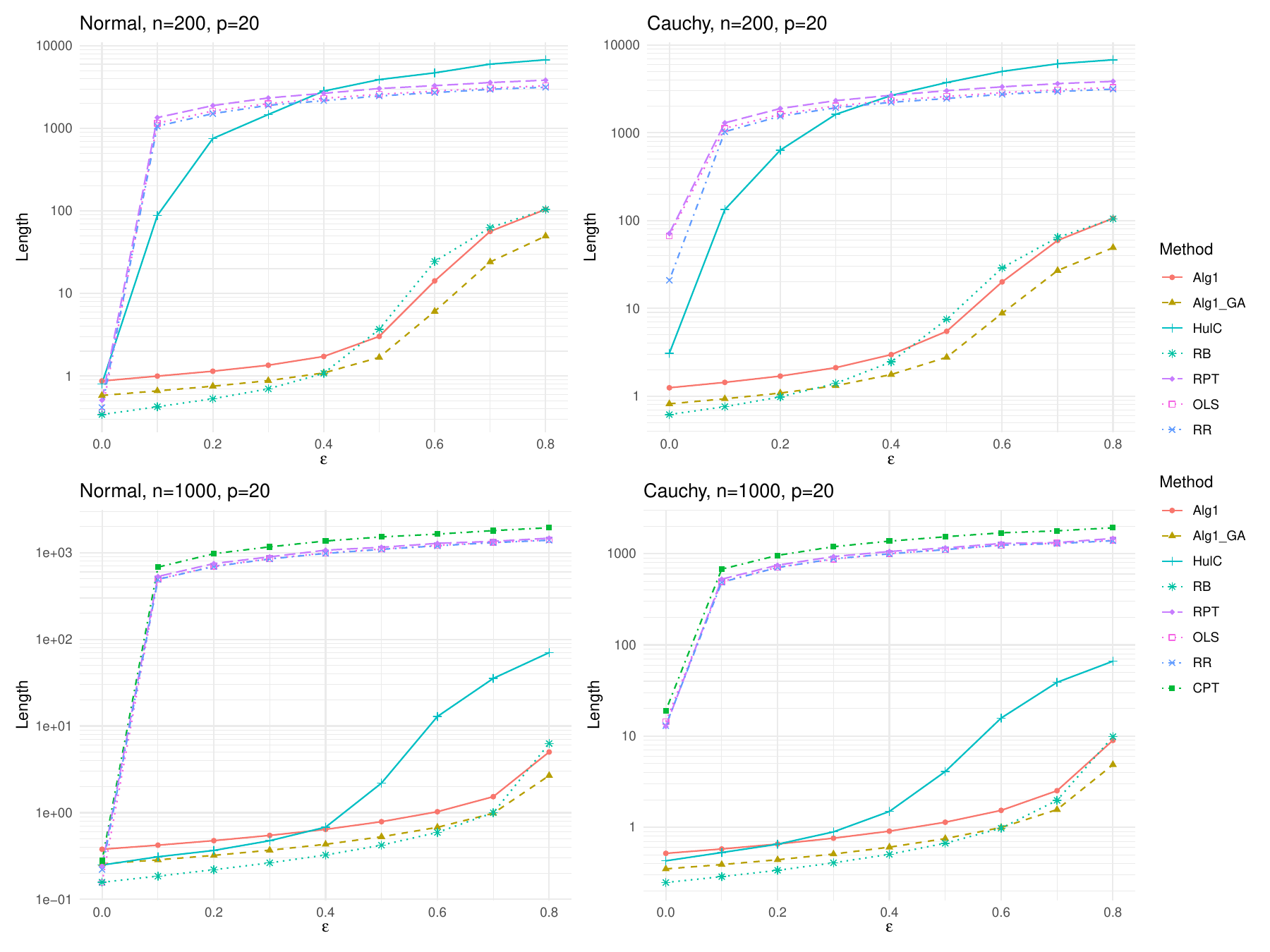}
  \caption{Average interval length against $\epsilon$ across settings.}
  \label{fig:length}
\end{figure}

Figure \ref{fig:coverage} shows that most methods achieve coverage at levels around $0.95$. In particular, since the proposed Algorithm \ref{alg:CI} (Alg1) is designed to have non-asymptotic coverage guarantee, its coverage probability as a function of $\epsilon$ consistently stays above the nominal level, while other methods all have fluctuations. In comparison, Alg1-GA, which is the variant of Algorithm \ref{alg:CI} with $1.5\sqrt{\log(2/\alpha)}$  replaced by $\Phi^{-1}(1-\alpha/2)$, has coverage below the nominal level for large values of $\epsilon$. This indicates that Gaussian approximation does not hold when the data has a large proportion of outliers, and the threshold level $1.5\sqrt{\log(2/\alpha)}$ based on non-asymptotic concentration is necessary.

Figure \ref{fig:length} shows that the length properties are drastically different across different methods. The performances of restructured regression (RR), cyclic permutation test (CPT) and residual permutation test (RPT) are similar to that of OLS. These methods do not seem to be robust, given the interval lengths reported in settings with large $\epsilon$ or Cauchy noise. Though CPT and RPT are proved to achieve coverage properties even when $\epsilon=1$ \citep{lei2021assumption,wen2025residual}, the length guarantees require a moment condition \citep{wen2025residual} and our experiments show that they do not work with arbitrary outliers. The convex hull method (HulC) performs reasonably well in the setting of $n=1000$ and $p=20$ for $\epsilon$'s that are not too large. However, when $n=200$, it is not competitive due to the necessary sample splitting step in the interval construction. The classical residual bootstrap (RB) works surprisingly well across all settings. It is both efficient when $\epsilon$ is small and robust when $\epsilon$ is large, even though there is not theoretical guarantee for residual bootstrap and we do not think it is a provable method for arbitrary contamination distribution. Compared with Algorithm \ref{alg:CI}, the residual bootstrap tends to undercover even when $\epsilon$ is very small. The most severe drawback of residual bootstrap is its computational cost, which is quite typical for a method based on resampling. Its computational time is around 25--180 times that of Algorithm \ref{alg:CI} depending on the outlier configuration.

In summary, we conclude that Algorithm \ref{alg:CI} behaves consistently well in all settings in terms of both coverage and length properties, which agrees with the theoretical guarantee established in Theorem \ref{thm:main}. Compared with residual bootstrap and other resampling algorithms, Algorithm \ref{alg:CI} has far better computational cost, which makes it more scalable for large data sets.

\subsection{Sensitivity to Model Parameters}\label{sec:num-add}

We conduct additional simulations to assess the sensitivity of the proposed method to key parameters, including sample size, scaling parameter, and the covariate distribution. The observations are generated according to the same setup described in Section \ref{sec:setupnum} unless otherwise mentioned. The noise distribution is Gaussian and the confidence level is set as $1-\alpha=0.95$. All experiments are implemented by Alg1 (the original version of Algorithm \ref{alg:CI}). Since we observe that the coverage property is always satisfied, we only report the length.

\begin{figure}[htbp]
  \centering
  \includegraphics[width=\linewidth] {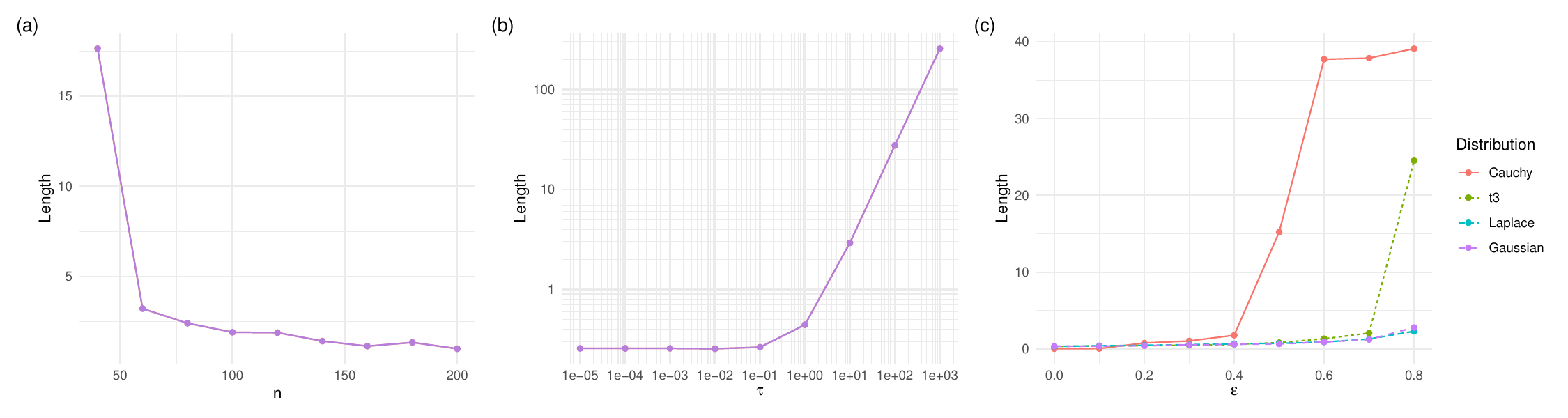}
  \caption{Average interval length against model parameters and covariate distribution.}
  \label{fig:len_plus}
\end{figure}

\prettyref{fig:len_plus}(a) varies $n\in[40,200]$ with $p=20$ and $\epsilon=0.2$ being fixed. The goal is to check how soon the robustness property of Algorithm \ref{alg:CI} kicks in as the sample size increases. For this setting with moderate dimension, $n\geq 50$ would be sufficient.

We use \prettyref{fig:len_plus}(b) to illustrate the effect of the algorithm scaling. Suppose we do not know that $\sigma^2=1$, Algorithm \ref{alg:CI} should be applied with the scaling $g(\cdot)\rightarrow g(\cdot/\tau)$ and $\rho(\cdot)\rightarrow \rho(\cdot/\tau)$ for some $\tau>0$. Then, Theorem \ref{thm:main} continues to hold as long as $\tau\lesssim 1$ and $\frac{p^2}{n(1-\epsilon)^4\tau^4}$ is sufficiently small. \prettyref{fig:len_plus}(b) shows that this is indeed the case. With $n=1000$, $p=20$ and $\epsilon=0.2$ being fixed, the length of the confidence interval stabilizes as long as $\tau$ is set to be sufficiently small. In fact, there does not seem to exist a regime where $\tau$ is too small, despite that we still need the condition $\frac{p^2}{n(1-\epsilon)^4\tau^4}<c$ theoretically.

\prettyref{fig:len_plus}(c) investigates the effect of the covariate distribution. In particular, we generate $X_i$ so that $\Sigma^{-1/2}X_i$ consists of $p$ independent standard Cauchy, $t$-distribution (3 degrees of freedom), Laplace and Gaussian with $\Sigma_{jk}=0.6^{|j-k|}$. The simulation is conducted with $p=20$ and $n=1000$ with $\epsilon$ varying from $0$ to $0.8$. It is clear from \prettyref{fig:len_plus}(c) that a lighter tail tends to be more robust, which is naturally expected. On the other hand, the result is not sensitive to the distribution tail. The breakdown point of the case with $t$-distribution still exceeds $\epsilon=0.7$.

\begin{figure}[htbp]
  \centering
  \includegraphics[width=\linewidth,height=0.33\linewidth] {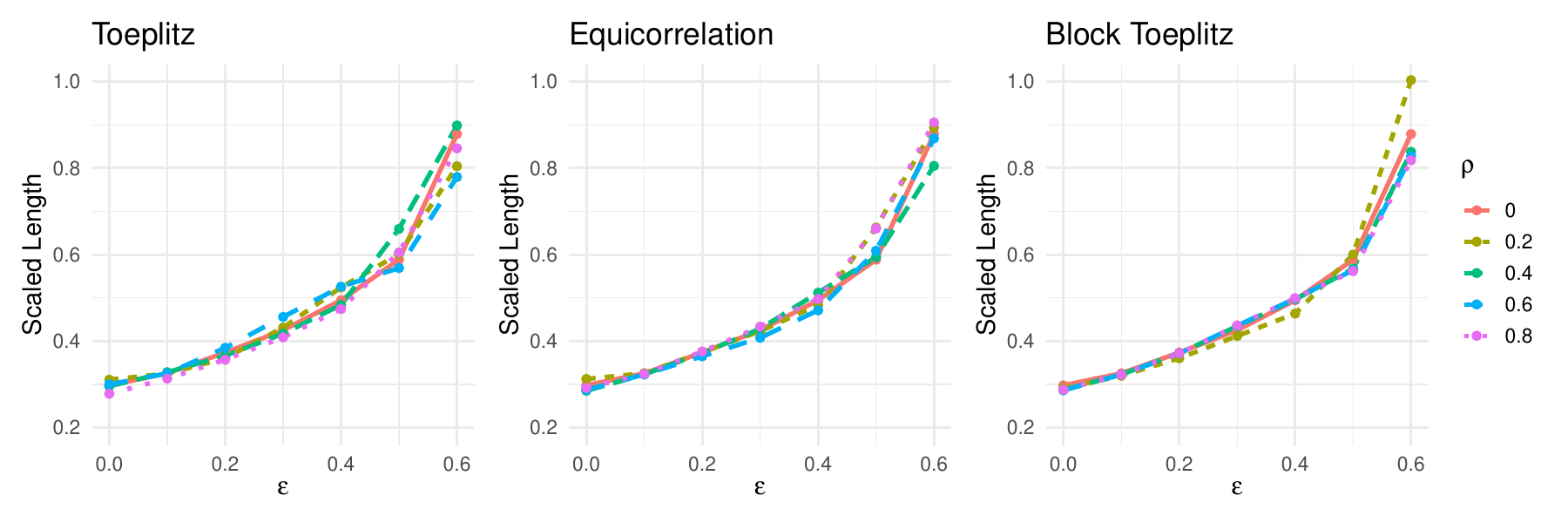}
  \caption{Scaled average interval length against $\epsilon$ with various correlation structures.}
  \label{fig:len vs cov}
\end{figure}

We finally investigate the effect of the correlation structure of the covariates. Let $X_i\overset{iid}\sim N(0,\Sigma)$. We consider three types of covariance matrices: equicorrelation, $\Sigma_{jk}=\rho+(1-\rho)\indi\{j=k\}$; Toeplitz, $\Sigma_{jk}=\rho^{|j-k|}$; and block Toeplitz, where $\rho^{|j-k|}$ within each of five equal-sized independent blocks. For a given $\Sigma$, the effective noise level for inference on the first regression coefficient is $\sqrt{(\Sigma^{-1})_{11}}$. Accordingly, we report the length of the confidence interval scaled by the inverse of this noise level. The simulation results for $p=20$ and $n=1000$ are shown in Figure \ref{fig:len vs cov}. The curves of the scaled confidence interval length as functions of $\epsilon$ are well aligned across different values of $\rho$ and across the three covariance structures.

\subsection{A Real Data Example}\label{sec:real}

We apply our robust algorithm to the Boston Housing data. The data were originally collected for the study of \cite{Harrison_1978} on housing prices. The variable description and linear model can be found at Table IV and the appendix of \cite{Harrison_1978}. The variables are transformed and standardized before regression.

For each regression coefficient, instead of computing its confidence interval, we modify Algorithm \ref{alg:CI} to compute a point estimator and a $p$-value. With $(\wt{x_i},\wt{y}_i)$ computed by the first four steps of Algorithm \ref{alg:CI}, the point estimator is found as the solution to $\sum_{i=1}^ng(\wt{y}_i-\wt{x}_i\beta)\wt{x}_i=0$, and the $p$-value is given by the formula
$$2\exp\left(-\frac{\left|\sum_{i=1}^ng(\wt{y}_i)\wt{x}_i\right|^2}{2.25\sum_{i=1}^n\wt{x}_i^2}\right).$$
We first apply OLS and Algorithm \ref{alg:CI} to the original data. Then, Algorithm \ref{alg:CI} is applied again to the contaminated data after adding $\frac{1}{2}N(10^2,10^4)+\frac{1}{2}N(10^4,10^8)$ to each response variable with probability $0.1$. The estimated coefficients and $p$-values are reported in \prettyref{tab:coef and p for Boston}. 
\begin{table}[!h]
\centering
\caption{Estimated coefficients and $p$-values for the Boston Housing data.}
\label{tab:coef and p for Boston}
\centering
\fontsize{10}{12}\selectfont
\begin{tabular}[t]{lcccccc}
\toprule
\multicolumn{1}{c}{ } & \multicolumn{2}{c}{OLS} & \multicolumn{2}{c}{Alg1} & \multicolumn{2}{c}{Alg1\_c} \\
\cmidrule(l{3pt}r{3pt}){2-3} \cmidrule(l{3pt}r{3pt}){4-5} \cmidrule(l{3pt}r{3pt}){6-7}
Covariate & Coef. & $p$-value & Coef. & $p$-value & Coef. & $p$-value\\
\midrule
age & 0.006 & 0.863 & -0.039 & 1 & -0.039 & 1\\
chas & 0.057 & 0.006 & 0.006 & 1 & 0.001 & 1\\
crim & -0.250 & <0.001 & -0.268 & <0.001 & -0.318 & 0.023\\
dis & -0.252 & <0.001 & -0.283 & 0.002 & -0.228 & 0.137\\
indus & 0.004 & 0.919 & -0.053 & 0.805 & 0.120 & 0.829\\
nox & -0.217 & <0.001 & -0.169 & 0.007 & -0.143 & 0.306\\
ptratio & -0.165 & <0.001 & -0.137 & <0.001 & -0.182 & <0.001\\
rad & 0.205 & <0.001 & 0.135 & 0.013 & 0.234 & 0.037\\
rm & 0.141 & <0.001 & 0.203 & 0.001 & 0.281 & 0.043\\
tax & -0.173 & <0.001 & -0.138 & 0.228 & -0.300 & 0.069\\
zn & 0.005 & 0.874 & -0.004 & 1 & 0.042 & 1\\
\bottomrule
\end{tabular}
\end{table}

According to the table, the estimated coefficients are relatively stable in terms of magnitude and sign. The inference, however, shows more variability. The OLS identifies many covariates as significant,
while Algorithm \ref{alg:CI} is more conservative and identifies fewer significant covariates. Since the residual Q--Q plot is quite heavy-tailed, the results of OLS are less reliable. Moreover, after the data is contaminated, Algorithm \ref{alg:CI} selects even fewer covariates as significant. The covariates \textsf{crim}, \textsf{ptratio}, \textsf{rad} and \textsf{rm} are significant under all scenarios, revealing a strong dependence of housing price on crime rate, pupil-teacher ratio, accessibility to radial highways, and average room number.

\section{Beyond Oblivious Contamination}\label{sec:beyond}

The robust regression setting (\ref{eq:mul-lin-comp}) with noise variables (\ref{eq:noise-sig}) has two crucial conditions: first, the contamination does not affect the covariates $X_i$; second, the contamination is independent of $X_i$. While relaxing these two conditions is certainly possible for estimation, we show in this section that the two conditions are essentially necessary for adaptive inference.

\textbf{Joint Contamination on $(X_i,y_i)$} Let us consider the univariate setting where samples $\{(x_i,y_i)\}_{i=1}^n$ are i.i.d. generated from the Huber contamination model $(1-\epsilon)P_{\beta,\tau,\sigma}+\epsilon Q$. Here, $P_{\beta,\tau,\sigma}$ stands for the joint distribution of $(x_i,y_i)$ whose sampling process is $x_i\sim N(0,\tau^2)$ and $y_i|x_i\sim N(\beta x_i,\sigma^2)$. In this setting where both $x_i$ and $y_i$ are contaminated, a shrinking adaptive confidence interval of $\beta$ is impossible.
\begin{Proposition}\label{prop:imp-joint}
There exists some $Q$ and some sufficiently small constant $c$, such that $P_{0,1,1}=\frac{2}{3}P_{c,0.99,0.99}+\frac{1}{3}Q$.
\end{Proposition}

The proposition shows that a linear model with $\beta=0$ and no contamination can be represented as a linear model with $\beta=c$ under $1/3$ contamination. Here, the constant $c$ can be taken as $\frac{1-0.99^2}{2\times 0.99}$. Therefore, any confidence interval that is agnostic between $\epsilon=0$ and $\epsilon=1/3$ must contain both $0$ and $c$, and hence has a constant length independent of the sample size.

\textbf{Contamination Depending on $X_i$} Even if the contamination only affects the response variables $y_i$, it will still be impossible to construct a shrinking adaptive confidence interval unless the contamination is independent of $X_i$. Let us consider the univariate setting, and the relation of the pair $(x_i,y_i)$ follows
\begin{equation}
y_i=\beta x_i+z_i+o_i,\label{eq:additive-outlier-dep}
\end{equation}
where $\beta\in\mathbb{R}$ and $o_i$ is an outlier variable that may depend on $x_i$. The model (\ref{eq:additive-outlier-dep}) has been widely studied in the literature on robust regression \cite{gannaz2007robust,mccann2007robust,she2011outlier}. When $o_i$ is independent of $x_i$, the sum $z_i+o_i$ follows (\ref{eq:noise-sig}) and is thus a special case of the robust regression with oblivious contamination.

We introduce the $P_{\beta,\sigma,Q(\cdot)}$ as the joint distribution of $(x_i,y_i)$, whose sampling process is given by the linear equation (\ref{eq:additive-outlier-dep}), where $x_i\sim N(0,1)$ and then $z_i\sim N(0,\sigma^2)$ independently of $x_i$. Finally, $o_i|x_i,z_i\sim Q(x_i,z_i)$ for the conditional distribution $Q(\cdot)$ given $x_i$ and $z_i$. The contamination proportion is  $\epsilon=\mathbb{E}Q(o_i\neq 0|x_i,z_i)$.

\begin{Proposition}\label{prop:imp-depend}
There exist $Q_0(\cdot)$, $Q_1(\cdot)$ and some sufficiently small constant $c$, such that $\mathbb{E}Q_0(o_i\neq 0|x_i,z_i)=0$, $\mathbb{E}Q_1(o_i\neq 0|x_i,z_i)=0.4$, and $P_{0,1,Q_0(\cdot)}=P_{c,0.99,Q_1(\cdot)}$.
\end{Proposition}

Proposition \ref{prop:imp-depend} admits an interpretation similar to that of Proposition \ref{prop:imp-joint}. In particular, a linear model with $\beta=0$ and no contamination can be represented as a linear model with $\beta=c$ under $40\%$ contamination where $c$ can be taken as 0.07. Consequently, any confidence interval that is agnostic about the contamination proportion must have a constant length independent of the sample size.

\section{Proofs}\label{sec:proof}

This section collects all technical proofs. We will first derive error bounds of $\wh{\alpha}$ and $\wh{\gamma}$ in Section \ref{sec:pf-alpha-gamma}. The main results (Theorem \ref{thm:main-cov}, Theorem \ref{thm:main-length} and Theorem \ref{thm:main}) will be proved in Section \ref{sec:pf-main}. We will then prove Proposition \ref{prop:loc-med} and Proposition \ref{prop:cov-gaus} in Section \ref{sec:pf-prop}, and prove Proposition \ref{prop:imp-joint} and Proposition \ref{prop:imp-depend} in Section \ref{sec:pf-by}. Finally, additional technical lemmas will be proved in Section \ref{sec:pf-lem}.

\subsection[Bounding Errors of alpha-hat and gamma-hat]%
{Bounding Errors of $\widehat{\alpha}$ and $\widehat{\gamma}$}
\label{sec:pf-alpha-gamma}

Writing the joint covariance of $X_i=(x_i,w_i^T)^T$ as
$$\Sigma=\begin{pmatrix}
\Sigma_{xx} & \Sigma_{xw} \\
\Sigma_{wx} & \Sigma_{ww}
\end{pmatrix}.$$
The least-squares estimator $\wh{\alpha}$ can be regarded as the sample version of $\alpha=\Sigma_{ww}^{-1}\Sigma_{wx}$.

\begin{Lemma} \label{lm:error in first-stage}
Under the setting of Theorem \ref{thm:main}, for any constant $\delta\in(0,1)$, there exist some constants $c>0$ and $C>0$ depending on $\delta$ such that
$$\|\wh{\alpha}-\alpha\|\leq C\sqrt{\frac{p}{n}},$$
with probability at least $1-\delta$  as long as $\frac{p^2}{n(1-\epsilon)^4}\leq c$.
\end{Lemma}
\begin{proof}
\prettyref{as:bounded moments} implies $\opnorm{\Sigma}\leq 2K_1^2$ and $\opnorm{\Sigma^{-1}}\leq K_3$.
Define
$$\wh{\Sigma}=\frac{1}{n}\sum_{i=1}^nX_iX_i^T=\begin{pmatrix}
\wh{\Sigma}_{xx} & \wh{\Sigma}_{xw} \\
\wh{\Sigma}_{wx} & \wh{\Sigma}_{ww}
\end{pmatrix}.$$
Given the sub-Gaussianity in \prettyref{as:bounded moments}, we have $\opnorm{\wh{\Sigma}-\Sigma}\leq C_1\sqrt{\frac{p}{n}}$ with high probability (see Lemma \ref{lm:fourth moment of covariate} stated in Section \ref{sec:pf-lem}). Since $\wh{\alpha}=\wh{\Sigma}_{ww}^{-1}\wh{\Sigma}_{wx}$, we have
$$\|\wh{\alpha}-\alpha\|\leq \opnorm{\wh{\Sigma}_{ww}^{-1}}\|\wh{\Sigma}_{wx}-\Sigma_{wx}\|+\opnorm{\wh{\Sigma}_{ww}^{-1}-\Sigma_{ww}^{-1}}\|\Sigma_{wx}\|,$$
where $\|\wh{\Sigma}_{wx}-\Sigma_{wx}\|\leq \opnorm{\wh{\Sigma}-\Sigma}\leq C_1\sqrt{\frac{p}{n}}$ and $\|\Sigma_{wx}\|\leq \opnorm{\Sigma}\leq 2K_1^2$. Since $\opnorm{\wh{\Sigma}_{ww}-\Sigma_{ww}}\leq \opnorm{\wh{\Sigma}-\Sigma}\leq C_1\sqrt{\frac{p}{n}}$, we know that both $\opnorm{\wh{\Sigma}_{ww}}$ and $\opnorm{\wh{\Sigma}_{ww}^{-1}}$ are bounded by constants. Therefore, $\opnorm{\wh{\Sigma}_{ww}^{-1}-\Sigma_{ww}^{-1}}\leq \opnorm{\wh{\Sigma}_{ww}^{-1}}\opnorm{\wh{\Sigma}_{ww}-\Sigma_{ww}}\opnorm{\Sigma_{ww}^{-1}}\leq C_2\sqrt{\frac{p}{n}}$. Combining these bounds, we obtain the desired rate for $\|\wh{\alpha}-\alpha\|$.
\end{proof}

Next, we will prove Lemma \ref{lem:estimation-huber} that bounds the error of $\wh{\gamma}$. This requires the following lemma on the random vector $\wt{X}_i=(\wt{x}_i,w_i^T)^T\in\mathbb{R}^p$.

\begin{Lemma}\label{lem:for-gamma}
Under the setting of Theorem \ref{thm:main}, for any constant $\delta\in(0,1)$, there exist some constants $C,c>0$ depending on $\delta$ such that 
\begin{enumerate}
\item $\left\|\frac{1}{n}\sum_{i=1}^n\rho'(z_i)\wt{X}_i\right\|\leq C\sqrt{\frac{p}{n}}$,
\item $\inf_{\|v\|=1}\frac{1}{n}\sum_{i=1}^n(|v^T\wt{X}_i|^2\wedge |v^T\wt{X}_i|)\indi\{|z_i|\leq 1\}\geq c(1-\epsilon),$
\end{enumerate}
with probability at least $1-\delta$.
\end{Lemma}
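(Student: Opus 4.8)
The plan is to exploit the fact that although $\wt X_i=(\wt x_i,w_i^T)^T$ depends on the estimate $\wh\alpha$, the noise $z_i$ is independent of the covariates $(x_i,w_i)$, while $\wh\alpha$ is a function of the covariates only. Introduce the \emph{population} residual $x_i^\ast=x_i-\alpha^Tw_i$ with $\alpha=\Sigma_{ww}^{-1}\Sigma_{wx}$, and set $X_i^\ast=(x_i^\ast,w_i^T)^T$, so that $\wt X_i-X_i^\ast$ differs from $X_i^\ast$ only in its first coordinate, where it equals $(\alpha-\wh\alpha)^Tw_i$. A convenient observation is that for any $v\in S_{p-1}$ one has $v^TX_i^\ast=u^TX_i$, where $u$ is the image of $v$ under the invertible linear map that fixes the first component of $v$ and sends the remaining block to $v_{2:p}-v_1\alpha$. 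Since $\opnorm{\Sigma},\opnorm{\Sigma^{-1}}\lesssim 1$ force $\|\alpha\|\lesssim 1$, this map and its inverse have bounded operator norm, so $\|u\|\asymp 1$ uniformly in $v$, and $v^TX_i^\ast$ is therefore a symmetric sub-Gaussian variable inheriting the nondegeneracy of \prettyref{as:bounded moments} up to constants.

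For the first claim I would write $\frac1n\sum_i\rho'(z_i)\wt X_i=\frac1n\sum_i\rho'(z_i)X_i^\ast+\frac1n\sum_i\rho'(z_i)(\wt X_i-X_i^\ast)$. The Huber derivative is bounded, $|\rho'|\le 2$, and $\mathbb{E}X_i^\ast=0$ by the symmetry in \prettyref{as:bounded moments}; since $z_i$ is independent of $X_i^\ast$, each summand $\rho'(z_i)X_i^\ast$ is centered with a sub-Gaussian projection of constant variance proxy, so a covering argument over $S_{p-1}$ yields $\big\|\frac1n\sum_i\rho'(z_i)X_i^\ast\big\|\lesssim\sqrt{p/n}$ with high probability. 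The correction term lives only in the first coordinate and equals $(\alpha-\wh\alpha)^T\frac1n\sum_i\rho'(z_i)w_i$; bounding it by $\|\wh\alpha-\alpha\|\cdot\big\|\frac1n\sum_i\rho'(z_i)w_i\big\|$ and using Lemma~\ref{lm:error in first-stage} together with the same sub-Gaussian concentration gives a term of order $p/n$, which is dominated by $\sqrt{p/n}$ whenever $p\le n$. This proves part 1.

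For the second claim, the key elementary inequalities are $t^2\wedge t\ge t^2\wedge 1$ and the $2$-Lipschitzness of $s\mapsto s^2\wedge 1$. Starting from the $\wt X_i$ version, Lipschitzness lower-bounds it by the $X_i^\ast$ version minus $2\|\wh\alpha-\alpha\|\sqrt{\frac1n\sum_i\|w_i\|^2}$; since $\frac1n\sum_i\|w_i\|^2\lesssim p$ and $\|\wh\alpha-\alpha\|\lesssim\sqrt{p/n}$, the discrepancy is $\lesssim p/\sqrt n$. For the $X_i^\ast$ version I would use the small-ball bound $|v^TX_i^\ast|^2\wedge 1\ge\tau^2\indi\{|v^TX_i^\ast|\ge\tau\}$ for a suitable constant $\tau$, reducing to a uniform lower bound on $\frac1n\sum_i\indi\{|v^TX_i^\ast|\ge\tau\}\indi\{|z_i|\le 1\}$. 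Its population mean factorizes as $\mathbb{P}(|u^TX_i|\ge\tau)\,\mathbb{P}(|z_i|\le 1)$, where a Paley--Zygmund argument off \prettyref{as:bounded moments}(2) (using $\|u\|\asymp 1$) lower-bounds the first factor by a constant, and $\mathbb{P}(|z_i|\le 1)\ge(1-\epsilon)\mathbb{P}(|N(0,1)|\le 1)\gtrsim 1-\epsilon$. A uniform law of large numbers for the class of half-space indicators, whose Rademacher complexity is $\lesssim\sqrt{p/n}$ without a logarithmic factor, then transfers this to the empirical average with deviation $\lesssim\sqrt{p/n}$, giving the lower bound $c(1-\epsilon)$.

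The main obstacle is the bookkeeping in $1-\epsilon$: every error term---the plug-in discrepancy $p/\sqrt n$ and the concentration/Rademacher deviation $\sqrt{p/n}$---must be shown negligible relative to the target level $1-\epsilon$. This is where the scaling hypothesis $\frac{p^2}{n(1-\epsilon)^4}\le c$ enters, since it gives $p/\sqrt n\le\sqrt c\,(1-\epsilon)^2$ and, using $p\le p^2$, also $\sqrt{p/n}\le\sqrt c\,(1-\epsilon)^2\le\sqrt c\,(1-\epsilon)$, so choosing $c$ small makes all error terms at most half of the population lower bound. The secondary difficulty, which the small-ball method is tailored to resolve, is obtaining a bound that is uniform over the sphere without incurring a spurious $\log n$ factor from a naive net-plus-Bernstein argument.
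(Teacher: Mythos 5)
Your proposal is correct, and it splits naturally into a part that mirrors the paper and a part that does not. For part 1 your argument coincides with the paper's proof: the same decomposition through the population residual $\wc{X}_i=(x_i-\alpha^Tw_i,w_i^T)^T$ (your $X_i^\ast$), the same observation that $\|\alpha\|\lesssim 1$ lets $\wc{X}_i$ inherit \prettyref{as:bounded moments} up to constants, and the same Cauchy--Schwarz treatment of the correction $(\alpha-\wh{\alpha})^T\frac{1}{n}\sum_i\rho'(z_i)w_i$ via Lemma~\ref{lm:error in first-stage}; the only (immaterial) difference is that the paper bounds the leading term by a direct second-moment computation plus Markov's inequality rather than a covering argument. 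For part 2 your concentration machinery genuinely differs. The paper conditions on the random index set $I=\{i: I_i=1,\,|z_i|\le 0.5\}$ of uncontaminated observations with moderate noise, shows $|I|/n\gtrsim 1-\epsilon$ by Chebyshev, and then proves the constant lower bound $\inf_{\|v\|=1}\frac{1}{|I|}\sum_{i\in I}\left(|v^T\wt{X}_i|\wedge|v^T\wt{X}_i|^2\right)\ge c$ by a constant-resolution net plus Bernstein's inequality applied to the sub-exponential variables $|v^T\wc{X}_i|\wedge|v^T\wc{X}_i|^2$, so the factor $1-\epsilon$ enters through the cardinality of $I$. You instead keep the full sample, truncate via $t^2\wedge t\ge \tau^2\indi\{t\ge\tau\}$, use the factorized population mean $\mathbb{P}(|v^TX_i^\ast|\ge\tau)\,\mathbb{P}(|z_i|\le 1)\gtrsim 1-\epsilon$, and invoke a VC/Rademacher uniform law for the slab-indicator class, so $1-\epsilon$ enters through the population expectation; your small-ball constant from \prettyref{as:bounded moments}(2) and your use of $p\le p^2$ to absorb all error terms under $\frac{p^2}{n(1-\epsilon)^4}\le c$ are both sound. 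What each approach buys: the paper's route needs nothing beyond nets and Bernstein, because $t\mapsto |t|\wedge t^2$ is Lipschitz so a net suffices and no VC theory is required (for this reason its net argument incurs no $\log n$ factor, contrary to the implication of your closing remark --- the discontinuity issue you worry about only arises once one passes to indicators, as you do); your route dispenses with the conditioning on $I$ and with the paper's $\kappa$/AM--GM bookkeeping, at the price of citing the Haussler--Dudley chaining bound for VC classes. A final observation: the log-free Rademacher bound is not actually needed, since even a deviation of order $\sqrt{p\log n/n}$ is absorbed by the hypothesis $\frac{p^2}{n(1-\epsilon)^4}\le c$, using that $\sqrt{\log n}/n^{1/4}$ is bounded by a universal constant.
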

The proof of Lemma \ref{lem:for-gamma} will be given in Section \ref{sec:pf-lem}. Now we are ready to prove Lemma \ref{lem:estimation-huber}.

\begin{proof}[Proof of Lemma \ref{lem:estimation-huber}]
Recall that $\gamma=\beta\wh{\alpha}+\theta$, and we can write $y_i=\beta\wt{x}_i+\gamma^Tw_i+z_i$. Define $\eta=(\beta,\gamma^T)^T\in\mathbb{R}^p$, and the model becomes
$$y_i=\wt{X}_i^T\eta+z_i.$$
Similarly, define $\wh{\eta}=(\wh{\beta},\wh{\gamma}^T)^T\in\mathbb{R}^p$, where $\wh{\beta}$ and $\wh{\gamma}$ solve the optimization $\min_{\beta,\gamma}\frac{1}{n}\sum_{i=1}^n\rho(y_i-\beta \wt{x}_i-\gamma^Tw_i)$. Since $\|\wh{\gamma}-\gamma\|\leq \|\wh{\eta}-\eta\|$, it suffices to bound $\|\wh{\eta}-\eta\|$. The definition of $\wh{\eta}$ implies 
\begin{eqnarray}
        \nonumber 0&=&\frac{1}{n}\sum_{i=1}^n \rho'(y_i-\wt{X}_i^T\wh{\eta} )\wt{X}_i \\
        \nonumber  &=& \frac{1}{n}\sum_{i=1}^n \rho'(\wt{X}_i^T(\eta-\wh{\eta})+z_i)\wt{X}_i  \\
        \label{eq:score huber}  &=& \frac{1}{n}\sum_{i=1}^n \rho'(z_i)\wt{X}_i+ \frac{1}{n}\sum_{i=1}^n (\rho'(\wt{X}_i^T(\eta-\wh{\eta})+z_i)-\rho'(z_i))\wt{X}_i.
\end{eqnarray}
It is straightforward to check that $\rho'(\cdot)$ also satisfies the inequality \eqref{eq:curve-g}. That is, 
\begin{equation*}
(\rho'(t+\Delta)-\rho'(t))\Delta \geq c \left(|\Delta|^2\wedge |\Delta|\right)\indi\{|t|\leq 1\},
\end{equation*}
for some constant $c>0$. Together with Lemma \ref{lem:for-gamma}, we have 
\begin{eqnarray}
    \nonumber &&\frac{1}{n}\sum_{i=1}^n (\rho'(\wt{X}_i^T(\eta-\wh{\eta})+z_i)-\rho'(z_i))\wt{X}_i^T(\eta-\wh{\eta}) \\
    \nonumber&\ge& c \frac{1}{n}\sum_{i=1}^n  \left(|\wt{X}_i^T(\eta-\wh{\eta})|^2\wedge |\wt{X}_i^T(\eta-\wh{\eta})|\right)\indi\{|z_i|\leq 1\} \\
    \nonumber &\ge& c  \|\eta-\wh{\eta}\|^2\land \|\eta-\wh{\eta}\|  \inf_{\|v\|=1}\frac{1}{n}\sum_{i=1}^n(|v^T\wt{X}_i|^2\wedge |v^T\wt{X}_i|)\indi\{|z_i|\leq 1\} \\
    \label{ineq:lower huber} &\ge& c_2(1-\epsilon) \left(\|\eta-\wh{\eta}\|^2 \land \|\eta-\wh{\eta}\|\right).
\end{eqnarray}
On the other hand, 
\begin{equation}
\left|\frac{1}{n}\sum_{i=1}^n \rho'(z_i)\wt{X}_i (\eta-\wh{\eta})\right|\leq \|\eta-\wh{\eta}\|  \left\|\frac{1}{n}\sum_{i=1}^n\rho'(z_i)\wt{X}_i\right\| \leq C\sqrt{\frac{p}{n}} \|\eta-\wh{\eta}\|, \label{ineq:upper huber}
\end{equation}
where the last inequality is again from Lemma \ref{lem:for-gamma}. Combining \eqref{eq:score huber}, \eqref{ineq:lower huber} and \eqref{ineq:upper huber}, we have
$c_2(1-\epsilon) \left(\|\eta-\wh{\eta}\| \land 1\right)\le  C\sqrt{\frac{p}{n}}$, which leads to the desired conclusion when $\frac{p}{n(1-\epsilon)^2}$ is small.
\end{proof}

\subsection{Proof of Main Results} \label{sec:pf-main}

We first state the following two lemmas, whose proofs will be given in Section \ref{sec:pf-lem}.
\begin{Lemma}\label{lem:for-main}
Under the setting of Theorem \ref{thm:main}, for any constant $\delta\in(0,1)$, there exist some constants $C,c,c'>0$ depending on $\delta$ such that
\begin{enumerate}
\item $\left\|\frac{1}{n}\sum_{i=1}^n\left(g'\left(z_i\right)-\mathbb{E}g'\left(z_i\right)\right)\wt{x}_iw_i\right\|\leq C\sqrt{\frac{p}{n}}$,
\item $\opnorm{\frac{1}{n}\sum_{i=1}^n|\wt{x}_i|w_iw_i^T}\leq C$,
\item $\frac{1}{n}\sum_{i=1}^n(|\wt{x}_i|\wedge\wt{x}_i^2)\indi\left\{|(\gamma-\wh{\gamma})^Tw_i+z_i|\leq 1\right\} \geq c'(1-\epsilon)$,
\item $c'\leq \frac{1}{n}\sum_{i=1}^n\wt{x}_i^2\leq C$,
\end{enumerate}
with probability at least $1-\delta$  as long as $\frac{p^2}{n(1-\epsilon)^4}\leq c$.
\end{Lemma}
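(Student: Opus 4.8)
The plan is to put all four bounds on a single high-probability event and prove them one at a time, building on the decomposition $\wt{x}_i = \bar{x}_i - (\wh{\alpha}-\alpha)^T w_i$, where $\bar{x}_i = x_i - \alpha^T w_i$ is the population residual with $\alpha = \Sigma_{ww}^{-1}\Sigma_{wx}$. Three ingredients will be reused throughout: (i) $\|\wh{\alpha}-\alpha\| \lesssim \sqrt{p/n}$ from Lemma~\ref{lm:error in first-stage}; (ii) the standard sub-Gaussian concentration bounds $\opnorm{\tfrac{1}{n}\sum_i w_i w_i^T} \lesssim 1$ and $\|\tfrac{1}{n}\sum_i \bar{x}_i w_i\| \lesssim \sqrt{p/n}$, the latter using the population orthogonality $\mathbb{E}[\bar{x}_i w_i]=0$; and (iii) $\|\wh{\gamma}-\gamma\| \lesssim \sqrt{p/(n(1-\epsilon)^2)} =: R$ from Lemma~\ref{lem:estimation-huber}. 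Under $p^2/(n(1-\epsilon)^4) \leq c$ one has $R/(1-\epsilon) \le C\sqrt{c}$, and it is this smallness that drives the various error terms to be negligible.

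I would prove the fourth bound first, since the other parts invoke $\tfrac1n\sum_i\wt x_i^2\lesssim 1$. The upper bound is immediate from the optimality of $\wh{\alpha}$: $\tfrac1n\sum_i \wt{x}_i^2 = \min_a \tfrac1n\sum_i(x_i - a^T w_i)^2 \leq \tfrac1n\sum_i \bar{x}_i^2$, which concentrates at $\mathbb{E}\bar{x}_i^2\lesssim 1$ by Bernstein applied to the sub-exponential $\bar{x}_i^2$. For the lower bound, expanding the square gives $\tfrac1n\sum_i\wt x_i^2 \ge \tfrac1n\sum_i\bar x_i^2 - 2\|\wh\alpha-\alpha\|\,\|\tfrac1n\sum_i\bar x_iw_i\| \ge \tfrac1n\sum_i\bar x_i^2 - O(p/n)$, and $\tfrac1n\sum_i\bar x_i^2 \ge \mathbb{E}\bar x_i^2 - o(1)$, where the constant lower bound on $\mathbb{E}\bar x_i^2 = \var(x_i-\alpha^Tw_i)$ follows by applying the second part of \prettyref{as:bounded moments} to the direction $(1,-\alpha^T)^T/\|(1,-\alpha^T)^T\|$.

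For the second bound I would control $\tfrac1n\sum_i |\wt{x}_i|(v^Tw_i)^2$ uniformly over $v\in S_{p-2}$ by an $\epsilon$-net, using $|\wt{x}_i| \leq |\bar{x}_i| + \|\wh{\alpha}-\alpha\|\,\|w_i\|$ to split it into a population piece, whose expectation is $\le(\mathbb{E}\bar x_i^2)^{1/2}(\mathbb{E}(v^Tw_i)^4)^{1/2}\lesssim 1$ by Cauchy--Schwarz and which concentrates over the net, and a perturbation piece $\le \|\wh\alpha-\alpha\|\max_i\|w_i\|\opnorm{\tfrac1n\sum_iw_iw_i^T}\lesssim \sqrt{p/n}\cdot\sqrt p \asymp p/\sqrt n\lesssim 1$ under $p^2/n\lesssim 1$. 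The first bound is cleanest via conditioning on the covariates: as $\{z_i\}$ is i.i.d.\ and independent of $\{(\wt x_i,w_i)\}$, the vector $\tfrac1n\sum_i(g'(z_i)-\mathbb{E}g'(z_i))\wt x_iw_i$ is conditionally a sum of independent mean-zero bounded (since $|g'|\le 1$) vectors with conditional second moment $\tfrac{\var(g'(z))}{n^2}\sum_i\wt x_i^2\|w_i\|^2\lesssim p/n$, using $\tfrac1n\sum_i\wt x_i^2\|w_i\|^2\lesssim p$. A vector Bernstein inequality (equivalently, a net over $S_{p-2}$ with scalar Bernstein for each projection) then upgrades this to $\lesssim\sqrt{p/n}$ with high probability.

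The main obstacle is the third bound, because the indicator $\indi\{|(\gamma-\wh{\gamma})^Tw_i+z_i|\leq 1\}$ couples $z_i$ with $\wh{\gamma}$, which itself depends on the entire noise vector, so a direct conditioning argument fails. I would resolve this in two moves. First, localize: on the event $\|\gamma-\wh\gamma\|\le R$ of Lemma~\ref{lem:estimation-huber}, write $u=\gamma-\wh\gamma$ and use the deterministic pointwise inequality $\indi\{|u^Tw_i+z_i|\le 1\}\ge \indi\{|z_i|\le \tfrac12\}-\indi\{|u^Tw_i|>\tfrac12\}$, which decouples the $z$-dependent event from $u$. The first resulting term $\tfrac1n\sum_i(|\wt x_i|\wedge\wt x_i^2)\indi\{|z_i|\le \tfrac12\}$ no longer involves $\wh\gamma$ and is $\gtrsim (1-\epsilon)$ by the argument proving the second part of Lemma~\ref{lem:for-gamma} (with threshold $\tfrac12$ and $v=e_1$). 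The remainder is then controlled uniformly over the whole ball $\{\|u\|\le R\}$ at once, with no empirical-process machinery, by Cauchy--Schwarz together with a Markov/operator-norm bound:
\[
\frac1n\sum_i(|\wt x_i|\wedge\wt x_i^2)\indi\{|u^Tw_i|>\tfrac12\}\le\Big(\tfrac1n\sum_i\wt x_i^2\Big)^{1/2}\Big(4\|u\|^2\opnorm{\tfrac1n\sum_i w_iw_i^T}\Big)^{1/2}\lesssim R,
\]
and since $R\le C\sqrt{c}\,(1-\epsilon)\ll(1-\epsilon)$ under the sample-size condition, this remainder is dominated by half the main term. Intersecting the high-probability events from the four parts and taking a union bound yields the stated conclusion.
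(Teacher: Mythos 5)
Your proposal is correct, and it reaches the conclusion through the same overall skeleton as the paper (localize via Lemmas~\ref{lm:error in first-stage} and~\ref{lem:estimation-huber}, reduce the third bound to an indicator involving $z_i$ alone, and invoke the analysis behind the second part of Lemma~\ref{lem:for-gamma}), but several steps are executed by genuinely different means. For the third bound, the paper decouples $\wh{\gamma}$ from $z_i$ by the $\ell_\infty$-type estimate $\max_i|(\gamma-\wh{\gamma})^Tw_i|\le\|\gamma-\wh{\gamma}\|\max_i\|w_i\|\le \frac{1}{2}$, which requires the bound $\max_i\|w_i\|\lesssim\sqrt{p}+\sqrt{\log n}$ (see (\ref{eq:maximumw})--(\ref{eq:maxiinnerp})); your subtraction trick $\indi\{|u^Tw_i+z_i|\le 1\}\ge\indi\{|z_i|\le\frac{1}{2}\}-\indi\{|u^Tw_i|>\frac{1}{2}\}$, combined with $\indi\{|u^Tw_i|>\frac{1}{2}\}\le 4(u^Tw_i)^2$ and Cauchy--Schwarz, handles the whole ball $\{\|u\|\le R\}$ in a single averaged bound, avoiding the maximum and the $\log n$ bookkeeping entirely --- a cleaner route to the same domination $R\ll(1-\epsilon)$. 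Similarly, your part-4 upper bound via least-squares optimality is slicker than the paper's comparison $\bigl|\sqrt{\tfrac{1}{n}\sum_i\wt{x}_i^2}-\sqrt{\tfrac{1}{n}\sum_i\wc{x}_i^2}\bigr|\le\|\wh{\alpha}-\alpha\|\sqrt{\tfrac{1}{n}\sum_i\|w_i\|^2}$, and your conditional vector-Bernstein argument for part 1 (valid because $\wt{x}_i$ is a function of the covariates only) replaces the paper's triangle-inequality split into $\wc{x}_i$ plus perturbation with Markov applied to the expected squared norm. The one place where the paper's route is materially lighter is part 2: it uses the pointwise inequality $|\wt{x}_i|(v^Tw_i)^2\le\frac{1}{2}\bigl(\wt{x}_i^2+(v^Tw_i)^4\bigr)$ so that everything reduces to Lemma~\ref{lm:fourth moment of covariate}, whereas your net argument needs uniform concentration of the mixed sums $\tfrac{1}{n}\sum_i|\bar{x}_i|(v^Tw_i)^2$, whose summands are only sub-Weibull of order $2/3$ --- doable, but it calls for a concentration inequality outside the paper's stated toolbox; also, in your perturbation piece, $\max_i\|w_i\|$ should be $\sqrt{p}+\sqrt{\log n}$ rather than $\sqrt{p}$, though your conclusion survives since $p\log n/n\lesssim 1$ under the stated sample-size condition. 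These are repairable details, not gaps.
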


\begin{Lemma}\label{lem:self}
Under the setting of Theorem \ref{thm:main}, for any constant $\alpha\in(0,1)$,
$$\mathbb{P}\left(\left|\frac{\frac{1}{n}\sum_{i=1}^ng(z_i)\wt{x}_i}{\sqrt{\frac{1}{n}\sum_{i=1}^n\wt{x}_i^2}}\right|>1.45\sqrt{\frac{\log(2/\alpha)}{n}}\right)\leq 0.97\alpha,$$
as long as $\frac{p^2}{n(1-\epsilon)^4}\leq c$ for some constant $c>0$ depending on $\alpha$.
\end{Lemma}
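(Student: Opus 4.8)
The plan is to compare the self-normalized statistic built from the \emph{empirical} residuals $\wt{x}_i$ against the one built from the \emph{population} residuals, for which an exact symmetry makes the contamination harmless. Write $\alpha=\Sigma_{ww}^{-1}\Sigma_{wx}$ and define $\wt{x}_i^*=x_i-\alpha^Tw_i=v^TX_i$ with $v=(1,-\alpha^T)^T$. By \prettyref{as:bounded moments}, every linear projection $v^TX_i$ is symmetric, so $\{\wt{x}_i^*\}_{i=1}^n$ are i.i.d.\ symmetric and independent of $\{z_i\}_{i=1}^n$. Setting
$$T^*=\frac{\sum_{i=1}^ng(z_i)\wt{x}_i^*}{\sqrt{\sum_{i=1}^n(\wt{x}_i^*)^2}},\qquad T=\frac{\sum_{i=1}^ng(z_i)\wt{x}_i}{\sqrt{\sum_{i=1}^n\wt{x}_i^2}},$$
the quantity in the statement equals $T/\sqrt{n}$, so the claim is equivalent to $\mathbb{P}\bigl(|T|>1.45\sqrt{\log(2/\alpha)}\bigr)\le 0.97\alpha$.

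First I would control $T^*$. Conditioning on $\{z_i\}$, the summands $\xi_i=g(z_i)\wt{x}_i^*$ are independent and symmetric, since each $\wt{x}_i^*$ is symmetric and $g(z_i)$ is a fixed scalar. For independent symmetric variables the self-normalized sum is sub-Gaussian: conditioning further on $(|\xi_i|)_i$ and applying Hoeffding to the i.i.d.\ Rademacher signs yields $\mathbb{P}\bigl(|\sum_i\xi_i|\ge t\sqrt{\sum_i\xi_i^2}\bigr)\le 2e^{-t^2/2}$. Because $|g|\le 1$ gives $\sum_i\xi_i^2\le\sum_i(\wt{x}_i^*)^2$, this implies $\mathbb{P}(|T^*|\ge t)\le 2e^{-t^2/2}$ for all $t>0$, uniformly over $Q$ and $\epsilon$. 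This is exactly the mechanism behind \prettyref{prop:cov-gaus}; the crucial point is that passing to the population residual removes the bias created by $\mathbb{E}g(z_i)=\epsilon\,\mathbb{E}_Q g\neq 0$, which is precisely what obstructs a direct argument after conditioning on the covariates (there the conditional bias $\mathbb{E}g(z_i)\cdot\sum_i\wt{x}_i/\sqrt{\sum_i\wt{x}_i^2}$ is of order $\epsilon$ and does not vanish).

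Next I would relate $T$ to $T^*$ through $\wt{x}_i=\wt{x}_i^*-\Delta^Tw_i$ with $\Delta=\wh{\alpha}-\alpha$, where $\|\Delta\|\lesssim\sqrt{p/n}$ with high probability by \prettyref{lm:error in first-stage}. For the numerator, $\sum_ig(z_i)\wt{x}_i=\sum_ig(z_i)\wt{x}_i^*-\Delta^T\sum_ig(z_i)w_i$, and standard sub-Gaussian concentration (splitting into the centered part and the mean $m\bar w$, cf.\ \prettyref{lm:fourth moment of covariate}) gives $\bigl\|\tfrac1n\sum_ig(z_i)w_i\bigr\|\lesssim\sqrt{p/n}$, so the numerator correction is $\lesssim p$ in absolute value. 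For the denominator, $\sum_i\wt{x}_i^2=\sum_i(\wt{x}_i^*)^2-2\Delta^T\sum_i\wt{x}_i^*w_i+\Delta^T\bigl(\sum_iw_iw_i^T\bigr)\Delta$; using $\mathbb{E}[\wt{x}_i^*w_i]=\Sigma_{wx}-\Sigma_{ww}\alpha=0$ and $\opnorm{\tfrac1n\sum_iw_iw_i^T}\lesssim 1$, the last two terms are $\lesssim p$, while both $\sum_i\wt{x}_i^2$ and $\sum_i(\wt{x}_i^*)^2$ are $\asymp n$ (by \prettyref{lem:for-main}(4) and the nondegeneracy in \prettyref{as:bounded moments}). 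Hence
$$|T|\le |T^*|\sqrt{\frac{\sum_{i=1}^n(\wt{x}_i^*)^2}{\sum_{i=1}^n\wt{x}_i^2}}+\frac{\bigl|\Delta^T\sum_{i=1}^ng(z_i)w_i\bigr|}{\sqrt{\sum_{i=1}^n\wt{x}_i^2}}\le |T^*|\bigl(1+O_{\mathbb{P}}(p/n)\bigr)+O_{\mathbb{P}}(p/\sqrt{n}).$$
Under $\frac{p^2}{n(1-\epsilon)^4}\le c$ we have $p/\sqrt{n}\le\sqrt{c}$, so both correction factors are $O_{\mathbb{P}}(\sqrt{c})$ and can be made arbitrarily small by shrinking $c$.

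Finally I would fix a threshold $t^*=1.447\sqrt{\log(2/\alpha)}$, slightly below $1.45\sqrt{\log(2/\alpha)}$. Taking $c=c(\alpha)$ small enough that each of the finitely many correction events fails with total probability at most $0.001\alpha$, the two perturbation factors are small enough that $|T^*|\le t^*$ forces $|T|\le 1.45\sqrt{\log(2/\alpha)}$. Combined with the tail bound $\mathbb{P}(|T^*|>t^*)\le 2(\alpha/2)^{(t^*)^2/(2\log(2/\alpha))}$, which is at most $0.969\alpha$ for every $\alpha\in(0,1)$ (the worst case being $\alpha\to 1$, where the bound tends to $2^{1-(t^*)^2/(2\log 2)}$), a union bound gives the claimed $0.97\alpha$. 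I expect this last quantitative step to be the main obstacle: the target constant $1.45$ exceeds the value $\sqrt{2}$ that produces the clean bound $\alpha$ by only a small margin, so the slack available for the empirical-versus-population perturbation is narrow, and the constants — in particular the choice of $c$ as a function of $\alpha$ — must be tracked carefully to land below $0.97\alpha$.
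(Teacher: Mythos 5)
Your proposal is correct, but it takes a genuinely different route from the paper. The paper never passes through the population residuals $x_i-\alpha^Tw_i$ at all: it applies the sign-symmetrization directly to the \emph{empirical} residuals $\wt{x}_i$, exploiting the fact that the least-squares coefficient $\wh{\alpha}$ is invariant under the row-wise sign flips $(x_i,w_i)\mapsto \xi_i(x_i,w_i)$. Consequently $\wt{x}^s_i:=\xi_i\wt{x}_i$ is a measurable function of the flipped data $\{X_i^s\}$ alone, which is independent of the Rademacher signs $\{\xi_i\}$, and conditional Hoeffding applied to $\sum_i \xi_i u_i$ with $u_i = g(z_i)\wt{x}_i^s\big/\sqrt{\tfrac1n\sum_j(\wt{x}_j^s)^2}$ yields the \emph{exact} bound $2\exp(-t^2/2n)$ for the self-normalized statistic — a five-line proof that consumes none of the slack between $\sqrt{2}$ and $1.45$ and makes essentially no use of the dimension condition. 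Your route instead proves the clean bound for $T^*$ built from population residuals (the same Hoeffding-on-signs mechanism, and your diagnosis of why conditioning on the covariates fails due to $\mathbb{E}g(z_i)\neq 0$ is exactly the right motivation), and then transfers to $T$ by a perturbation analysis through $\|\wh{\alpha}-\alpha\|\lesssim\sqrt{p/n}$, concentration of $\tfrac1n\sum_i g(z_i)w_i$ and $\tfrac1n\sum_i \wt{x}_i^*w_i$, and the two-sided bounds on $\tfrac1n\sum_i\wt{x}_i^2$. This is where the hypothesis $p^2/(n(1-\epsilon)^4)\le c$ gets spent in your argument, and it forces $c$ to depend on $\alpha$ and the constants ($1.447$ versus $1.45$, $0.968\alpha+0.001\alpha\le 0.97\alpha$) to be tracked carefully; your accounting of these constants does close, so the proof works. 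What each approach buys: the paper's invariance trick gives a sharper, assumption-light, exact statement, while your perturbation template is more generic — it would survive if $\wh{\alpha}$ were replaced by any estimator close to $\alpha$, whereas the paper's argument relies on the specific sign-flip equivariance of ordinary least squares.
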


\begin{proof}[Proof of Theorem \ref{thm:main-cov}]
By Lemma \ref{lem:decor}, it suffices to bound (\ref{eq:cov-general}) and (\ref{eq:bias-control-est}), which are implied by Lemma \ref{lem:self} and Lemma \ref{lem:estimation-huber}, respectively. Combining the bounds and choosing the $\delta$ in Lemma \ref{lem:decor} to be sufficiently small, we have $\frac{|G_n(\beta)|}{\sqrt{\frac{1}{n}\sum_{i=1}^n\wt{x}_i^2}}\leq 1.5\sqrt{\frac{\log(2/\alpha)}{n}}$ with probability at least $1-\alpha$, which implies coverage.
\end{proof}

\begin{proof}[Proof of Theorem \ref{thm:main-length}]
Following the arguments in Section \ref{sec:length}, it suffices to show (\ref{eq:l-todo}) holds uniformly over all $\wh{\beta}$ such that $|\wt{\beta}-\beta|>r$. Since the function $G_n(\cdot)$ is monotone, we only need to show (\ref{eq:l-todo}) for $\wt{\beta}\in\{\beta-r,\beta+r\}$. Since $|G_n(\wt{\beta})| \geq |-G_n(\wt{\beta})+G_n(\beta)|-|G_n(\beta)|$, we need to lower bound $|-G_n(\wt{\beta})+G_n(\beta)|$ and upper bound $|G_n(\beta)|$. Applying Theorem \ref{thm:main-cov} with its $\alpha$ replaced by some small $\delta$ to be determined later, we have $|G_n(\beta)|\leq \frac{C_1}{\sqrt{n}}\sqrt{\frac{1}{n}\sum_{i=1}^n\wt{x}_i^2}$ with probability at least $1-\delta$. Next, we use (\ref{eq:improved-later}) followed by Lemma \ref{lem:for-main}, and obtain $|-G_n(\wt{\beta})+G_n(\beta)| \geq c_1(1-\epsilon)\left(1\wedge |\wt{\beta}-\beta|\right)$. Therefore, for $\wt{\beta}\in\{\beta-r,\beta+r\}$, we have $|G_n(\wt{\beta})|\geq c_1(1-\epsilon)(1\wedge r)-\frac{C_1}{\sqrt{n}}\sqrt{\frac{1}{n}\sum_{i=1}^n\wt{x}_i^2}$. In order that (\ref{eq:l-todo}) holds, we need $c_1(1-\epsilon)(1\wedge r)\geq \frac{C_1+1.5\sqrt{\log(2/\alpha)}}{\sqrt{n}}\sqrt{\frac{1}{n}\sum_{i=1}^n\wt{x}_i^2}$. Since $\frac{1}{n}\sum_{i=1}^n\wt{x}_i^2$ is bounded by a constant according to Lemma \ref{lem:for-main}, it is sufficient to set $r=\frac{C_2}{\sqrt{n(1-\epsilon)^2}}$ for some sufficiently large constant $C_2>0$. In summary, (\ref{eq:l-todo}) holds for $\wt{\beta}\in\{\beta-r,\beta+r\}$ with probability at least $1-C'\delta$. The proof is complete by setting $\delta=\alpha/C'$.
\end{proof}

\begin{proof}[Proof of Theorem \ref{thm:main}]
With slight modifications of the proofs of Theorem \ref{thm:main-cov} and Theorem \ref{thm:main-length}, it can be shown that $\beta\in[\wh{\beta}_L,\wh{\beta}_R]$ holds with probability at least $1-0.99\alpha$ and $\wh{\beta}_R-\wh{\beta}_L\leq \frac{C}{\sqrt{n(1-\epsilon)^2}}$ holds with probability at least $1-0.01\alpha$. Therefore, by the union bound, the two events hold simultaneously with probability at least $1-\alpha$.
\end{proof}

\subsection{Proofs of Proposition \ref{prop:loc-med} and Proposition \ref{prop:cov-gaus}} \label{sec:pf-prop}

\begin{proof}[Proof of Proposition \ref{prop:loc-med}]
The coverage property is equivalent to (\ref{eq:coverage-event}), which is implied by Hoeffding's inequality (Lemma \ref{lm:Hoeffding} stated in Section \ref{sec:pf-lem}). To derive the length guarantee, it suffices to show that for any $|\wt{\beta}-\beta|>\frac{C}{2\sqrt{n(1-\epsilon)^2}}$,
$$
\frac{1}{n}\sum_{i=1}^n\indi\{y_i\leq \wt{\beta}\} < \frac{1}{2}- \sqrt{\frac{\log(2/\alpha)}{2n}}\quad\text{ or }\quad\frac{1}{n}\sum_{i=1}^n\indi\{y_i< \wt{\beta}\} > \frac{1}{2}+ \sqrt{\frac{\log(2/\alpha)}{2n}},
$$
and thus $\wh{\beta}$ does not belong to (\ref{eq:ci-quant}). When $\wt{\beta}-\beta>\frac{C}{2\sqrt{n(1-\epsilon)^2}}$, we have
\begin{eqnarray}
\nonumber \frac{1}{n}\sum_{i=1}^n\indi\{y_i< \wt{\beta}\} &\geq& \frac{1}{n}\sum_{i=1}^n\indi\left\{y_i< \beta+\frac{C}{2\sqrt{n(1-\epsilon)^2}}\right\} \\
\label{eq:fnf} &\geq& \mathbb{P}_{z\sim (1-\epsilon)N(0,1)+\epsilon Q_0}\left(z< \frac{C}{2\sqrt{n(1-\epsilon)^2}}\right) - \frac{C_1}{\sqrt{n}} \\
\nonumber &\geq& \frac{1}{2} + (1-\epsilon)\mathbb{P}\left(0< N(0,1)< \frac{C}{2\sqrt{n(1-\epsilon)^2}}\right) - \frac{C_1}{\sqrt{n}} \\
\nonumber &\geq& \frac{1}{2}+ \sqrt{\frac{\log(2/\alpha)}{2n}},
\end{eqnarray}
where (\ref{eq:fnf}) holds with high probability by Hoeffding's inequality and the last inequality holds with a sufficiently large $C>0$. The other direction $\frac{1}{n}\sum_{i=1}^n\indi\{y_i\leq \wt{\beta}\} < \frac{1}{2}- \sqrt{\frac{\log(2/\alpha)}{2n}}$ implied by $\wt{\beta}-\beta<-\frac{C}{2\sqrt{n(1-\epsilon)^2}}$ is by a symmetric argument.
\end{proof}

\begin{proof}[Proof of Proposition \ref{prop:cov-gaus}]
The coverage property follows the same argument in the proof of Lemma \ref{lem:self}. For the length guarantee, it suffices to show
$$\frac{\left|\frac{1}{n}\sum_{i=1}^ng(x_i\wt{\beta}-y_i)x_i\right|}{\sqrt{\frac{1}{n}\sum_{i=1}^nx_i^2}}> \sqrt{\frac{2\log(2/\alpha)}{n}}$$
holds for all $|\wt{\beta}-\beta|>\frac{C}{2\sqrt{n(1-\epsilon)^2}}$. This can be established by the same argument in the proof of Theorem \ref{thm:main-length}.
\end{proof}

We note that it is important that Proposition \ref{prop:cov-gaus} states the length guarantee in probability. The following result shows that the in-expectation length is actually infinite in the worst case.
\begin{Proposition}\label{prop:inf-exp-len}
Let the endpoints of the interval (\ref{eq:Z-CI-med-reg-tau}) be $\wh{\beta}_L$ and $\wh{\beta}_R$. Then, for any $C>0$, there is a contamination distribution $Q$ such that $\mathbb{E}|\wh{\beta}_R-\wh{\beta}_L|\geq C$.
\end{Proposition}
\begin{proof}
    We consider $\beta=0$ and thus $y_i=z_i\sim (1-\epsilon)N(0,1)+\epsilon Q$. Since $\mathbb{E}|\wh{\beta}_R-\wh{\beta}_L|\geq \epsilon^n\mathbb{E}_{z_i\overset{iid}{\sim} Q}|\wh{\beta}_R-\wh{\beta}_L|$, we can consider $\epsilon=1$ without loss of generality.
    Let $Q(z_i=A)=Q(z_i=-A)=\frac{1}{2}$ where $A>0$ is a large constant. Since $x_i\sim N(0,1)$, we have $\Prob(\max|x_i|\le 10\sqrt{\log n})\ge 0.99$. Therefore, as long as $|\beta|\le \frac{A}{20\sqrt{\log n}}$, we have $\Prob(\max|x_i\beta|\le \frac{1}{2}|z_i|)\ge 0.99$ and 
    $$ \abs{ \frac{\frac{1}{n}\sum_{i=1}^ng(x_i\beta-y_i)x_i}{\sqrt{\frac{1}{n}\sum_{i=1}^nx_i^2}}}\le  \abs{ \frac{\frac{1}{n}\sum_{i=1}^n \text{sign}(z_i)x_i}{\sqrt{\frac{1} {n}\sum_{i=1}^nx_i^2}}}+ \frac{\frac{1}{n}\sum_{i=1}^n (1-g(A/2))|x_i|}{\sqrt{\frac{1} {n}\sum_{i=1}^nx_i^2}}
    \le \sqrt{\frac{2\log(2/\alpha)}{n}}$$
    with probability at least $0.99(1-2\alpha)$ for large enough $A$ since $ \abs{ \frac{\frac{1}{n}\sum_{i=1}^n \text{sign}(z_i)x_i}{\sqrt{\frac{1} {n}\sum_{i=1}^nx_i^2}}}\le \sqrt{\frac{2\log(1/\alpha)}{n}}$ with probability at least $1-2\alpha$ and $\frac{\frac{1}{n}\sum_{i=1}^n|x_i|}{\sqrt{\frac{1} {n}\sum_{i=1}^nx_i^2}}\le 1$ by Cauchy's inequality. The expected length is thus at least $\frac{0.99A(1-2\alpha)}{10\sqrt{\log n}}$ which can be arbitrarily large as long as $A$ is chosen large enough.
\end{proof}

\subsection{Proofs of Proposition \ref{prop:imp-joint} and Proposition \ref{prop:imp-depend}} \label{sec:pf-by}

\begin{proof}[Proof of Proposition \ref{prop:imp-joint}]
   The existence of $Q$ such that  $P_{0,1,1}=\frac{2}{3}P_{c,\sigma,\sigma}+\frac{1}{3}Q$ is equivalent to 
    \begin{eqnarray*}
      &&  \exp\left(-\frac{x^2+y^2}{2}\right) \ge 
      \frac{2}{3\sigma^2} \exp\left(-\frac{x^2+(y-xc)^2}{2\sigma^2}\right) \text{ for all } x,y\in \mathbb{R}\\
      &\Leftarrow& 2\sigma^2\ln\left( \frac{3\sigma^2}{2}\right)+(1+c^2-\sigma^2)x^2+(1-\sigma^2)y^2-2cxy\ge 0 
      \text{ for all } x,y\in \mathbb{R} \\
      &\Leftarrow& 
      \begin{cases}
          \frac{3\sigma^2}{2}\ge 1 \\
          \begin{pmatrix}
             1+c^2-\sigma^2& -c\\
             -c &1-\sigma^2
          \end{pmatrix}\succeq 0
      \end{cases}
      \\
    &\Leftarrow& 
    \begin{cases}
      \frac{2}{3}\le \sigma^2 <1 \\
      c<\frac{1-\sigma^2}{\sigma}
    \end{cases}.
  \end{eqnarray*}
  It's easy to see $\sigma=0.99,c=\frac{1-0.99^2}{2\times0.99}$ satisfy the conditions.
\end{proof}

We need the following lemma to prove Proposition \ref{prop:imp-depend}.
\begin{Lemma} \label{lm:contaminated normal}
    Suppose $\sigma_1\le \sigma_2\le \frac{\sigma_1}{1-\epsilon}$ and $\abs{r}\le \sqrt{2(\sigma_2^2-\sigma_1^2)\ln\frac{\sigma_1}{(1-\epsilon)\sigma_2}}$. Then there exists distribution $Q$ such that 
    $$ (1-\epsilon)N(0,\sigma_1^2)+\epsilon Q = N(r,\sigma_2^2). $$
\end{Lemma}
\begin{proof}
    By symmetry we can assume $r\ge0$. The existence of $Q$ is equivalent to for all $t\in \mathbb{R}$,
    $$ \frac{1-\epsilon}{\sigma_1} \exp\left(-\frac{t^2}{2\sigma_1^2}\right) \le \frac{1}{\sigma_2} \exp\left(-\frac{(t-r)^2}{2\sigma_2^2}\right), $$
    which is implied by  
    $$ r\le \min_{t\in \mathbb{R}}\left(t+\sqrt{2\sigma_2^2\left(\frac{t^2}{2\sigma_1^2}-\ln\frac{(1-\epsilon)\sigma_2}{\sigma_1}\right)}\right)$$
    when $\sigma_1\le \sigma_2\le \frac{\sigma_1}{1-\epsilon},r\ge0$. By differentiation it's easy to show that 
    $$ \min_{t\in \mathbb{R}}\left(t+\sqrt{2\sigma_2^2\left(\frac{t^2}{2\sigma_1^2}-\ln\frac{(1-\epsilon)\sigma_2}{\sigma_1}\right)}\right)=\sqrt{2(\sigma_2^2-\sigma_1^2)\ln\frac{\sigma_1}{(1-\epsilon)\sigma_2}}. $$   
\end{proof}

\begin{proof}[Proof of Proposition \ref{prop:imp-depend}] We define $Q_0(o_i=0|x_i,z_i)=1$.
   Then, we need to construct the conditional distribution $o_i|x_i,z_i \sim Q_1(x_i,z_i)$ such that $z_i+o_i \sim N(-x_ic,1)$, where $z_i \sim N(0,0.99^2)$ and $c=0.07$. Let  $q_{\alpha}=\Phi^{-1}(1-\alpha)$ be the Gaussian quantile. We describe the generating process of $o_i|x_i,z_i$ as follows.
   \begin{enumerate}
       \item When $|x_i|\le q_{0.05}$, by \prettyref{lm:contaminated normal}, there exists some $Q_{x_i}$ such that 
       $$ \frac{2}{3} N(0,0.99^2)+\frac{1}{3} Q_{x_i} = N(-x_ic,1),$$
        since $|x_ic|\le 0.07q_{0.05} \le \sqrt{2(1-0.99^2)\ln\frac{3\times0.99}{2}}$.  Let $q_i\sim Q_{x_i}$ and $o_i=B_i(q_i-z_i)$ with $B_i \sim \text{Bernoulli}(\frac{1}{3})$. Then $z_i+o_i \sim N(-x_ic,1)$.
       \item When $|x_i|> q_{0.05}$, let $r_i \sim N(-x_ic,1)$ and $o_i=r_i-z_i$. Then $z_i+o_i \sim N(-x_ic,1)$.
   \end{enumerate}
   Overall, the sampling process of $o_i|x_i,z_i \sim Q_1(x_i,z_i)$ can be written as $o_i=\indc{|x_i|\le q_{0.05}}B_i(q_i-z_i)+\indc{|x_i|> q_{0.05}}(r_i-z_i)$. Since 
   $\mathbb{E}Q_1(o_i\neq 0|x_i,z_i)=\frac{1}{3}\Prob(|x_i|\le q_{0.05})+ \Prob(|x_i|> q_{0.05})=0.4$, the construction is valid.
\end{proof}

\subsection{Proofs of Technical Lemmas}\label{sec:pf-lem}

We will prove Lemma \ref{lem:decor}, Lemma \ref{lem:for-gamma}, Lemma \ref{lem:for-main} and Lemma \ref{lem:self} in this section. We first state some concentration bounds that will be needed.

\begin{Lemma}[\cite{hoeffding1963probability}] \label{lm:Hoeffding}
Suppose $x_1,\cdots,x_n$ are independent zero-mean random variables such that $a_i\leq x_i\leq b_i$ for all $i\in[n]$. Then,
$$\mathbb{P}\left(\sum_{i=1}^nx_i\geq t\right)\leq\exp\left(-\frac{2t^2}{\sum_{i=1}^n(b_i-a_i)^2}\right),$$
for any $t>0$.
\end{Lemma}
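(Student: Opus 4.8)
The plan is to use the standard Chernoff exponential-moment method together with a sharp bound on the moment generating function of a bounded random variable. First I would apply Markov's inequality to the random variable $e^{\lambda \sum_i x_i}$ for an arbitrary $\lambda > 0$, and then factorize using independence:
$$\mathbb{P}\left(\sum_{i=1}^n x_i \geq t\right) \leq e^{-\lambda t}\,\mathbb{E}\!\left[e^{\lambda \sum_{i=1}^n x_i}\right] = e^{-\lambda t}\prod_{i=1}^n \mathbb{E}\!\left[e^{\lambda x_i}\right].$$
This reduces the problem to controlling each one-dimensional moment generating function $\mathbb{E}[e^{\lambda x_i}]$.

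The core of the argument is the single-variable bound (Hoeffding's lemma): for a zero-mean random variable $x$ supported on $[a,b]$ with $a<0<b$,
$$\mathbb{E}\!\left[e^{\lambda x}\right] \leq \exp\left(\frac{\lambda^2(b-a)^2}{8}\right).$$
I would prove this by writing any $x\in[a,b]$ as the convex combination $x=\frac{b-x}{b-a}\,a+\frac{x-a}{b-a}\,b$ and invoking convexity of $u\mapsto e^{\lambda u}$ to obtain the pointwise bound $e^{\lambda x}\le \frac{b-x}{b-a}e^{\lambda a}+\frac{x-a}{b-a}e^{\lambda b}$. Taking expectations and using $\mathbb{E}x=0$ removes all dependence on the distribution of $x$, leaving $\mathbb{E}[e^{\lambda x}]\le e^{L(h)}$, where with $p=-a/(b-a)$ and $h=\lambda(b-a)$ one has $L(h)=-ph+\log\!\left(1-p+pe^{h}\right)$. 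A direct computation gives $L(0)=0$, $L'(0)=0$, and $L''(h)\le 1/4$, since $L''(h)$ equals a Bernoulli-type variance which is at most $1/4$; Taylor's theorem with Lagrange remainder then yields $L(h)\le h^2/8=\lambda^2(b-a)^2/8$.

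Substituting the single-variable bound into the product, I obtain
$$\mathbb{P}\left(\sum_{i=1}^n x_i \geq t\right) \leq \exp\left(-\lambda t + \frac{\lambda^2}{8}\sum_{i=1}^n (b_i - a_i)^2\right).$$
The exponent is a convex quadratic in $\lambda$, minimized at $\lambda = 4t/\sum_{i=1}^n (b_i-a_i)^2$; substituting this value collapses the exponent to $-2t^2/\sum_{i=1}^n (b_i-a_i)^2$, which is exactly the claimed bound.

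The main obstacle is the proof of Hoeffding's lemma, and specifically obtaining the correct sub-Gaussian constant $1/8$. The convexity step that eliminates the distribution of $x$ is routine, but pinning down the constant requires the second-order analysis of $L(h)$ and the uniform bound $L''(h)\le 1/4$; a cruder argument would give the exponential tail with a worse constant and fail to reproduce the stated inequality. Everything else, namely the Chernoff step, the factorization over independent coordinates, and the final one-dimensional optimization over $\lambda$, is elementary.
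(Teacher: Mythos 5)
Your proof is correct: this is the classical Chernoff-plus-Hoeffding's-lemma argument, with the sharp sub-Gaussian constant $1/8$ obtained via the second-derivative bound $L''(h)\le 1/4$ and the optimal choice $\lambda = 4t/\sum_{i=1}^n(b_i-a_i)^2$, exactly reproducing the stated inequality. The paper does not prove this lemma at all --- it is quoted directly from \cite{hoeffding1963probability} --- and your argument is precisely the standard proof of that cited result, so there is nothing to reconcile.
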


\begin{Lemma}[Proposition 5.16 of \cite{vershynin2010introduction}] \label{lm:Bernstein}
Suppose $x_1,\cdots,x_n$ are independent zero-mean sub-exponential random variables such that $\max_{i\in[n]}\mathbb{E}\exp(x_i/K)\leq e$ for some constant $K>0$. Then, there exists some constant $C>0$ such that
$$\mathbb{P}\left(\sum_{i=1}^nx_i\geq t\right)\leq \exp\left(-C\left(\frac{t^2}{n}\wedge t\right)\right),$$
for any $t>0$.
\end{Lemma}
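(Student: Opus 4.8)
The plan is to follow the classical Chernoff-plus-optimization route; the only genuine subtlety is the passage from the single-point exponential moment control to a quadratic bound on the moment generating function (MGF) valid on a whole interval of exponents.

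\textbf{Step 1 (Chernoff reduction).} For any $\lambda>0$, Markov's inequality applied to $\exp(\lambda\sum_{i=1}^n x_i)$ together with independence gives
$$\mathbb{P}\left(\sum_{i=1}^n x_i\geq t\right)\leq e^{-\lambda t}\prod_{i=1}^n\mathbb{E}\,e^{\lambda x_i}.$$
So the whole problem reduces to a uniform control of the individual MGFs $\mathbb{E}\,e^{\lambda x_i}$ for small $\lambda$.

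\textbf{Step 2 (Local quadratic MGF bound).} I would show that there are constants $c_0,C_0>0$ (depending only on $K$) with
$$\mathbb{E}\,e^{\lambda x_i}\leq \exp\left(C_0 K^2\lambda^2\right)\qquad\text{for all }0\leq\lambda\leq c_0/K.$$
The input is the hypothesis $\mathbb{E}\,e^{x_i/K}\leq e$, which via the elementary inequality $e^{s}\geq s^k/k!$ yields the moment estimates $\mathbb{E}|x_i|^k\lesssim k!\,K^k$ for every $k\geq 1$. Expanding $e^{\lambda x_i}=1+\lambda x_i+\sum_{k\geq 2}(\lambda x_i)^k/k!$, taking expectations, and using the centering $\mathbb{E}x_i=0$ to annihilate the linear term, the tail of the series is dominated by $\sum_{k\geq 2}(C_1 K\lambda)^k$, a geometric sum that converges and is $O(K^2\lambda^2)$ once $K\lambda\leq c_0$; then $1+O(K^2\lambda^2)\leq\exp(C_0K^2\lambda^2)$. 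This is the step I expect to be the main obstacle: the moment bounds must be tracked carefully, and the restriction $\lambda\leq c_0/K$ is precisely what will separate the two tail regimes below.

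\textbf{Step 3 (Optimize over $\lambda$).} Substituting Step 2 into Step 1 gives
$$\mathbb{P}\left(\sum_{i=1}^n x_i\geq t\right)\leq \exp\left(-\lambda t+C_0 n K^2\lambda^2\right),\qquad 0\leq\lambda\leq c_0/K.$$
The unconstrained minimizer is $\lambda^\ast=t/(2C_0 nK^2)$. When $\lambda^\ast\leq c_0/K$ (equivalently $t\lesssim nK$) this choice is admissible and produces the sub-Gaussian tail $\exp(-t^2/(4C_0 nK^2))$. When $\lambda^\ast>c_0/K$, I instead set $\lambda=c_0/K$ at the boundary; since then $C_0 nK^2\lambda^2=C_0c_0^2 n\leq \tfrac12 c_0 t/K$, the exponent is at most $-\tfrac12 c_0 t/K$, giving the sub-exponential tail $\exp(-c_0 t/(2K))$. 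Combining the two cases, the bound is at most $\exp\!\big(-C(t^2/(nK^2)\wedge t/K)\big)$, and absorbing the fixed constant $K$ into $C$ (allowed, since the statement permits $C$ to depend on $K$) yields exactly $\exp(-C(t^2/n\wedge t))$, as claimed.
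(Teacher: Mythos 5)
Your proof is correct, and there is nothing to compare it against in the paper itself: the paper does not prove this lemma but imports it by citation (Proposition 5.16 of \cite{vershynin2010introduction}). Your argument — Chernoff reduction, a quadratic MGF bound $\mathbb{E}\,e^{\lambda x_i}\leq \exp(C_0K^2\lambda^2)$ valid for $\lambda\leq c_0/K$ obtained from the moment estimates $\mathbb{E}|x_i|^k\lesssim k!\,K^k$, and the constrained optimization over $\lambda$ that splits the tail into sub-Gaussian and sub-exponential regimes — is precisely the classical proof underlying the cited result, and all the details you sketch (the geometric domination of the series tail after the linear term is killed by centering, and the boundary choice $\lambda=c_0/K$ with $C_0c_0^2n\leq \tfrac12 c_0t/K$ when the unconstrained minimizer is inadmissible) check out.

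One small caveat worth recording: your moment step $\mathbb{E}|x_i|^k\lesssim k!\,K^k$ needs the \emph{two-sided} exponential moment $\mathbb{E}\,e^{|x_i|/K}\leq e$, whereas the displayed hypothesis literally reads $\mathbb{E}\,e^{x_i/K}\leq e$. The one-sided condition alone does not suffice: a zero-mean variable taking a large negative value $-M$ with probability of order $K/M$ and a value of order $K$ otherwise satisfies the one-sided bound, yet its sum has upper tail of order $\exp(-cnK/M)$, which violates the stated conclusion for large $M$. Since the lemma explicitly posits sub-exponential variables (so the $\psi_1$-norm, i.e.\ two-sided, normalization is the intended reading, as in Vershynin), your use of the two-sided bound is consistent with the intended hypothesis, but the distinction deserves a sentence in a self-contained write-up.
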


\begin{Lemma}[Theorem 2.1 of \cite{al2025sharp}] \label{lm:fourth moment of covariate}
Suppose $X_1,\cdots,X_n$ are i.i.d. vectors satisfying \prettyref{as:bounded moments}. For any constant $\delta\in(0,1)$ and $d\in\{2,4\}$, there exists some constant $C>0$ depending on $\delta$ such that
$$\sup_{v\in S_{p-1}}\left|\frac{1}{n}\sum_{i=1}^n\left((v^TX_i)^d-\mathbb{E}(v^TX_i)^d\right)\right|\leq C\left(\sqrt{\frac{p}{n}}+\frac{p^{d/2}}{n}\right),$$
with probability at least $1-\delta$.
\end{Lemma}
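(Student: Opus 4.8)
The plan is to prove the two cases $d=2$ and $d=4$ separately, since they have rather different flavors. For $d=2$, note that $\frac1n\sum_{i=1}^n(v^TX_i)^2=v^T\wh\Sigma v$ with $\wh\Sigma=\frac1n\sum_iX_iX_i^T$, and $\mathbb E(v^TX_i)^2=v^T\Sigma v$, so the supremum over $S_{p-1}$ equals exactly $\opnorm{\wh\Sigma-\Sigma}$. Since each $X_i$ is sub-Gaussian by \prettyref{as:bounded moments}(1), this is the textbook sample-covariance concentration bound $\opnorm{\wh\Sigma-\Sigma}\lesssim\sqrt{p/n}+p/n$ with probability at least $1-\delta$, which is precisely $\sqrt{p/n}+p^{d/2}/n$ at $d=2$. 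First I would record this case as an immediate consequence of standard sub-Gaussian matrix concentration (a covering argument over $S_{p-1}$ combined with the sub-exponential Bernstein bound of Lemma \ref{lm:Bernstein} applied to the scalars $(v^TX_i)^2$).

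The substantive case is $d=4$. Define the centered form $P(v)=\frac1n\sum_{i=1}^n\big((v^TX_i)^4-\mathbb E(v^TX_i)^4\big)$, which is a homogeneous polynomial of degree $4$ in $v$. I would control $\sup_{v\in S_{p-1}}|P(v)|$ by a net argument, but rather than bounding a random Lipschitz constant through $\max_i\|X_i\|$, I would exploit that $P$ is a \emph{fixed} degree-four homogeneous polynomial: for any $\eta$-net $\mathcal N$ of $S_{p-1}$ one has the deterministic comparison $\sup_{v\in S_{p-1}}|P(v)|\le C_0\max_{v\in\mathcal N}|P(v)|$ with $C_0$ depending only on the degree and $\eta$ (a Markov–Bernstein-type inequality for polynomials on the sphere, equivalently the constant-factor net bound for the spectral norm of a symmetric $4$-tensor). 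Choosing $\eta$ a small constant gives $|\mathcal N|\le e^{Cp}$, so it remains to establish a pointwise tail for $P(v)$ and take a union bound over $\mathcal N$.

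For the pointwise bound, fix $v$ and set $Y_i=v^TX_i$; by \prettyref{as:bounded moments}(1), $Y_i$ is sub-Gaussian, hence $Y_i^2$ is sub-exponential and $Y_i^4=(Y_i^2)^2$ is sub-Weibull of order $1/2$ with constant norm. The associated generalized Bernstein inequality yields for the centered average a two-regime tail of the form $\mathbb P(|P(v)|>t)\le 2\exp\!\big(-c\min(nt^2,(nt)^{1/2})\big)$. To absorb the union-bound factor $\log|\mathcal N|\asymp p$, I would require each term to be $\le\delta e^{-Cp}$: the sub-Gaussian regime $nt^2\gtrsim p$ forces $t\gtrsim\sqrt{p/n}$, while the heavy-tailed regime $(nt)^{1/2}\gtrsim p$ forces $t\gtrsim p^2/n$. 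Taking the maximum gives exactly $t\asymp\sqrt{p/n}+p^2/n=\sqrt{p/n}+p^{d/2}/n$ at $d=4$, as claimed.

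The main obstacle is the $d=4$ analysis, and specifically two points within it. The first is making the net-to-supremum reduction honest for the heavy-tailed fourth-moment form: a naive Lipschitz bound brings in $\max_i\|X_i\|^3$, so I would instead lean on the polynomial comparison inequality above (equivalently, the standard constant-factor net bound for tensor operator norms). The second is confirming that the correct exponent $p^{d/2}/n$ emerges from the sub-Weibull-$1/2$ tail; this is where the order-$1/2$ (rather than order-$1$) tail is essential, since it is the $(nt)^{1/2}$ regime—not a linear-in-$t$ regime—that produces the $p^2/n$ term. The symmetry and non-degeneracy in \prettyref{as:bounded moments} are used only to guarantee finiteness and boundedness of the relevant Orlicz norms.
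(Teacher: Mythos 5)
Your proposal is correct, but there is an important structural point: the paper never proves Lemma~\ref{lm:fourth moment of covariate} at all --- it is imported wholesale as Theorem~2.1 of \cite{al2025sharp} --- so there is no internal proof to compare against, and what you have written is a self-contained re-derivation of the cited result. On the merits, your argument goes through. For $d=2$, the supremum is exactly $\opnorm{\wh{\Sigma}-\Sigma}$ and the net-plus-Bernstein argument (using the paper's Lemma~\ref{lm:Bernstein} on the sub-exponential scalars $(v^TX_i)^2$) gives $\sqrt{p/n}+p/n$, matching the claim. For $d=4$, both of your key ingredients are legitimate but come from outside the paper: (i) the constant-factor net comparison $\sup_{v\in S_{p-1}}|P(v)|\leq C_0\max_{v\in\mathcal{N}}|P(v)|$ for the diagonal of a symmetric $4$-tensor follows from the telescoping identity $T(v,v,v,v)-T(v',v',v',v')=\sum_{k=1}^{4}T(\cdots,v-v',\cdots)$ combined with Banach's theorem (or polarization) to convert the multilinear operator norm back into the diagonal supremum, valid for $\eta$ below a degree-dependent threshold; and (ii) the two-regime tail $2\exp\left(-c\min\left(nt^2,(nt)^{1/2}\right)\right)$ for averages of centered sub-Weibull$(1/2)$ variables is the Gluskin--Kwapie\'{n}/Adamczak/Kuchibhotla--Chakrabortty generalized Bernstein inequality; the paper's Lemma~\ref{lm:Bernstein} covers only the sub-exponential case, so this cannot be cited from within the paper. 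Your bookkeeping is exactly right: with a net of cardinality $e^{O(p)}$, the Gaussian regime forces $t\gtrsim\sqrt{p/n}$ and the Weibull regime forces $t\gtrsim p^2/n$, which is where $p^{d/2}/n$ comes from. Two small points you should make explicit for completeness: the centering step $\|Y^4-\mathbb{E}Y^4\|_{\psi_{1/2}}\lesssim\|Y^4\|_{\psi_{1/2}}$ needs the quasi-norm centering inequality (since $\psi_{1/2}$ is only a quasi-norm), and the tensor net-comparison inequality deserves a precise statement or citation, as it is genuinely less standard than its matrix counterpart.
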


\begin{proof}[Proof of Lemma \ref{lem:decor}]
The expansion (\ref{eq:taylor}) and the arguments in Section \ref{sec:cov} imply
$$|G_n(\beta)|\leq\left|\frac{1}{n}\sum_{i=1}^ng\left(z_i\right)\wt{x}_i\right|+ \|\wh{\gamma}-\gamma\|\left\|\frac{1}{n}\sum_{i=1}^n\left(g'\left(z_i\right)-\mathbb{E}g'\left(z_i\right)\right)\wt{x}_iw_i\right\| + \frac{\|\wh{\gamma}-\gamma\|^2}{2}\opnorm{\frac{1}{n}\sum_{i=1}^n|\wt{x}_i|w_iw_i^T}.$$
This immediately implies the desired bound using Lemma \ref{lem:for-main}.
\end{proof}

\begin{proof}[Proof of Lemma \ref{lem:for-gamma}] 
The data generating process (\ref{eq:noise-sig}) can be equivalently described as follows. We first sample $I_i\overset{iid}\sim \text{Bernoulli}(1-\epsilon)$. Then, sample $z_i|I_i\sim I_iN(0,1)+(1-I_i)Q$. Define the set $I=\{i\in [n]:I_i=1,|z_i|\le 0.5\}$ and $n_0=|I|$. Besides, we define $\wc{x}_i=x_i-\alpha^Tw_i$ and $\wc{X}_i^T=(\wc{x}_i,w_i^T)^T$. 
Since the $(1,1)$ entry of $\Sigma^{-1}$ equals $(\Sigma_{xx}-\Sigma_{xw}\Sigma_{ww}^{-1}\Sigma_{wx})^{-1}$, $\Sigma_{xw}\Sigma_{ww}^{-1}\Sigma_{wx}\le \Sigma_{xx}\le \opnorm{\Sigma}\le 2K_1^2 $ and $$\norm{\alpha}^2=\Sigma_{xw}\Sigma_{ww}^{-2}\Sigma_{wx}\le \norm{\Sigma_{ww}^{-\frac{1}{2}}\Sigma_{wx}}^2 \opnorm{\Sigma_{ww}^{-1}}=\Sigma_{xw}\Sigma_{ww}^{-1}\Sigma_{wx}\opnorm{\Sigma_{ww}^{-1}}\le 2K_1^2K_3.$$
Thus, we can write $\widecheck{x}_i$ as $u^TX_i$ with $\|u\|$ bounded by some constant. This implies the vector $\widecheck{X}_i$ also satisfies \prettyref{as:bounded moments} with constants $\widecheck{K}_1$, $\widecheck{K}_2$ and $\widecheck{K}_3$.

By triangle inequality, we have 
$$  \norm{\frac{1}{n}\sum_{i=1}^n \rho'(z_i) \wt{X}_i}\le 
   \norm{\frac{1}{n}\sum_{i=1}^n \rho'(z_i) \wc{X}_i}+\norm{\frac{1}{n}\sum_{i=1}^n \rho'(z_i) (\wt{X}_i-\wc{X}_i)}.$$
Since $\widecheck{X}_i$ is symmetric and is independent of  $z_i$, and thus
\begin{equation}
 \Expect \norm{\frac{1}{n}\sum_{i=1}^n \rho'(z_i) \wc{X}_i}^2 
   = \sum_{j=1}^p \Expect \left(\frac{1}{n}\sum_{i=1}^n \rho'(z_i) \wc{X}_{ij}\right)^2 
   = \frac{1}{n^2} \sum_{j=1}^p  \sum_{i=1}^n \Expect
   \bpar{\rho'(z_i) \wc{X}_{ij}}^2 \le C_1\frac{p}{n},\label{eq:expell2}
   \end{equation}
which implies $\norm{\frac{1}{n}\sum_{i=1}^n \rho'(z_i) \wc{X}_i}\le C_2\sqrt{\frac{p}{n}}$ with high probability by Markov's inequality. By \prettyref{lm:error in first-stage}, we have
$$ \norm{\frac{1}{n}\sum_{i=1}^n \rho'(z_i) (\wt{X}_i-\wc{X}_i)}=\abs{\frac{1}{n}\sum_{i=1}^n \rho'(z_i) (\wh{\alpha}-\alpha)^Tw_i} \le \norm{\wh{\alpha}-\alpha}\norm{\frac{1}{n}\sum_{i=1}^n \rho'(z_i)w_i}\le C_3\sqrt{\frac{p}{n}},$$
with high probability.
Combining the above displays, we obtain the first inequality in \prettyref{lem:for-gamma}.

   For the second inequality,  we note that $n(1-\epsilon)^2$ is large enough when  $\frac{p^2}{n(1-\epsilon)^4}$ is small. By Chebyshev's inequality, we have 
    $$ \left|\frac{n_0}{n}-(1-\epsilon)(\Phi(0.5)-\Phi(-0.5))\right| 
    \le C_4\sqrt{\frac{\Var(\indc{|z_i|\le 0.5, I_i=1})}{n}}
    \le \frac{(1-\epsilon)(\Phi(0.5)-\Phi(-0.5))}{2},$$
    with high probability, which implies
    \begin{equation}
    \frac{n_0}{n}\ge \frac{1}{2}(1-\epsilon)(\Phi(0.5)-\Phi(-0.5)).\label{eq:nling}
    \end{equation}
     For any $v\in S_{p-1}$ and some constant $\kappa$ to be specified later, by Lipschitz continuity we have 
    \begin{eqnarray}
        \nonumber &&\frac{1}{n_0} \sum_{i\in I} |\wt{X}_i^T v|\land|\wt{X}_i^T v|^2  \\
      \nonumber   &\ge& \frac{1}{n_0} \sum_{i\in I}  |\wc{X}_i^T v|\land|\wt{X}_i^T v|^2 -\frac{1}{n_0}  \sum_{i\in I}  \abs{v^T(\wc{X}_i-\wt{X}_i)} \\
     \label{eq:liproot}   &\ge& \frac{1}{n_0} \sum_{i\in I}  |\wc{X}_i^T v|\land|\wc{X}_i^T v|^2 -\frac{1}{n_0}  \sum_{i\in I}  \abs{v^T(\wc{X}_i-\wt{X}_i)}-\frac{1}{n_0}\sum_{i\in I}  \abs{v^T(\wc{X}_i-\wt{X}_i)}2\sqrt{\abs{v^T\wc{X}_i}} \\
     \nonumber    &\ge& \frac{1}{n_0} \sum_{i\in I}  |\wc{X}_i^T v|\land|\wc{X}_i^T v|^2 -\frac{1}{n_0}  \sum_{i\in I}  \abs{v^T(\wc{X}_i-\wt{X}_i)}-\frac{1}{n_0\kappa}\sum_{i\in I}  \abs{v^T(\wc{X}_i-\wt{X}_i)}^2-\frac{\kappa}{n_0}  \sum_{i\in I}\abs{v^T\wc{X}_i} \\
     \nonumber    &\ge& \frac{1}{n_0} \sum_{i\in I}  |\wc{X}_i^T v|\land|\wc{X}_i^T v|^2 -\sqrt{\frac{1}{n_0}  \sum_{i\in I}  \abs{v^T(\wc{X}_i-\wt{X}_i)}^2}-\frac{1}{n_0\kappa}\sum_{i\in I}  \abs{v^T(\wc{X}_i-\wt{X}_i)}^2-\kappa \sqrt{\frac{1}{n_0}  \sum_{i\in I}\abs{v^T\wc{X}_i}^2}.
    \end{eqnarray}
    The inequality (\ref{eq:liproot}) uses the fact that $f(x)=x^2\wedge a$ is $\sqrt{2a}$-Lipschitz.
   Let $\mathcal{V}$ be a covering of $S_{p-1}$ such that for any $v\in S_{p-1}$, there exists some $v'\in \mathcal{V}$ s.t. 
    $\norm{v-v'}\le \Delta<1$. It is known that $\mathcal{V}$ can be constructed such that  $|\mathcal{V}|\le \bpar{\frac{3}{\Delta}}^p$. By the same argument used in the above display, we have 
    \begin{eqnarray*}
       &&\frac{1}{n_0} \sum_{i\in I}  |\wc{X}_i^T v|\land|\wc{X}_i^T v|^2 \\
       &\ge&  \frac{1}{n_0} \sum_{i\in I}  |\wc{X}_i^T v'|\land|\wc{X}_i^T v'|^2 -\sqrt{\frac{1}{n_0}  \sum_{i\in I}  \abs{(v-v')^T\wc{X}_i}^2}-\frac{1}{n_0\kappa}\sum_{i\in I}  \abs{(v-v')^T\wc{X}_i}^2-\kappa \sqrt{\frac{1}{n_0}  \sum_{i\in I}\abs{v'^T\wc{X}_i}^2}
    \end{eqnarray*}
    Combining the above two displays, we have 
    \begin{eqnarray*}
      &&\inf_{\norm{v}=1}\frac{1}{n_0} \sum_{i\in I} |\wt{X}_i^T v|\land|\wt{X}_i^T v|^2 \ \\
      &\ge&
      \inf_{\norm{v}\in \mathcal{V}}\frac{1}{n_0} \sum_{i\in I}  |\wc{X}_i^T v'|\land|\wc{X}_i^T v'|^2-\sqrt{\frac{1}{n_0}  \sum_{i\in I}  \abs{(\hat{\alpha}-\alpha)^Tw_i}^2}-\frac{1}{n_0\kappa}\sum_{i\in I}  \abs{(\hat{\alpha}-\alpha)^Tw_i}^2-\kappa \sqrt{\opnorm{\wc{\Sigma}}}   \\
      && -\Delta \sqrt{\opnorm{\wc{\Sigma}}}-\frac{\Delta^2}{\kappa}\opnorm{\wc{\Sigma}}-\kappa \sqrt{\opnorm{\wc{\Sigma}}},
    \end{eqnarray*}
    where $\widecheck{\Sigma}=\frac{1}{n_0}\sum_{i\in I}\widecheck{X}_i\widecheck{X}_i^T$. By (\ref{eq:nling}), we know that $\frac{p}{n_0}\le \frac{p}{n} \frac{2}{(1-\epsilon)(\Phi(0.5)-\Phi(-0.5))}$ is small when $\frac{p^2}{n(1-\epsilon)^4}$ is small. Since $n_0$ and $I$ are independent of $\{\widecheck{X}_i\}_{i=1}^n$, we have $\opnorm{\wc{\Sigma}}\le \opnorm{\wc{\Sigma}-\Cov(\wc{X}_i)}+ \opnorm{\Cov(\wc{X}_i)}\le C_5 $ by Lemma \ref{lm:fourth moment of covariate} and the sub-Gaussianity of $\widecheck{X}_i$. By \prettyref{lm:error in first-stage} and Markov's inequality, we have 
    $\frac{1}{n_0}\sum_{i\in I}  \abs{(\hat{\alpha}-\alpha)^Tw_i}^2\le \norm{\hat{\alpha}-\alpha}^2\frac{1}{n_0}\sum_{i\in I} \norm{w_i}^2 \le C_6\frac{p^2}{n}$. In addition,
    $$
       \Expect (|\wc{X}_i^Tv|\land|\wc{X}_i^Tv|^2) 
       \ge  \left(\frac{1}{\wc{K}_2}\land 1 \right) \Expect (|\wc{X}_i^Tv|^2 \indi\{|v^T\wc{X}_i|\leq \wc{K}_2\}) 
       \ge \frac{\wc{K}_3}{\wc{K}_2}\land \wc{K}_3.
    $$
    Since $|\wc{X}_i^Tv|\land|\wc{X}_i^Tv|^2$ is sub-exponential, by \prettyref{lm:Bernstein} and union bound there exists some constant $C_7$ such that
    $$ \sup_{\norm{v}\in \mathcal{V}}\abs{\frac{1}{n_0}  \sum_{i\in I}  |\wc{X}_i^T v'|\land|\wc{X}_i^T v'|^2-\Expect (|\wc{X}_i^Tv|\land|\wc{X}_i^Tv|^2) }\le C_7\left(\sqrt{\frac{p\log(1/\Delta)}{n_0}}+\frac{p\log(1/\Delta)}{n_0}\right)$$
    with high probability.
     In the end, choosing $\kappa=\Delta$ to be a sufficiently small constant, then when $\frac{p^2}{n(1-\epsilon)^4}$ is small, there exists some constant $c>0$ such that 
    $$ \inf_{\norm{v}=1}\frac{1}{n_0} \sum_{i\in I} |\wt{X}_i^T v|\land|\wt{X}_i^T v|^2 \ge c $$
    with high probability by combining the above bounds with (\ref{eq:nling}). Using (\ref{eq:nling}) again, we obtain 
$$\inf_{\norm{v}=1}\frac{1}{n} \sum_{i=1}^n \left(|\wt{X}_i^T v|\land|\wt{X}_i^T v|^2 \right) \indc{|z_i|\le 1}\geq \frac{n_0}{n} \inf_{\norm{v}=1}\frac{1}{n_0} \sum_{i\in I} |\wt{X}_i^T v|\land|\wt{X}_i^T v|^2\geq c_1(1-\epsilon).$$
The proof is complete.
\end{proof}

\begin{proof}[Proof of Lemma \ref{lem:for-main}]
Recall the definitions of $\widecheck{x}_i$ and $\widecheck{X}_i$ in the proof of Lemma \ref{lem:for-gamma}.
   By triangle inequality,
   $$ \left\|\frac{1}{n}\sum_{i=1}^n\left(g'\left(z_i\right)-\mathbb{E}g'\left(z_i\right)\right)\wt{x}_iw_i\right\|\le \left\|\frac{1}{n}\sum_{i=1}^n\left(g'\left(z_i\right)-\mathbb{E}g'\left(z_i\right)\right)\wc{x}_iw_i\right\|+\left\|\frac{1}{n}\sum_{i=1}^n\left(g'\left(z_i\right)-\mathbb{E}g'\left(z_i\right)\right)(\wt{x}_i-\wc{x}_i)w_i\right\|. $$
   Using a similar argument to (\ref{eq:expell2}) together with Markov's inequality, we have
   $$\left\|\frac{1}{n}\sum_{i=1}^n\left(g'\left(z_i\right)-\mathbb{E}g'\left(z_i\right)\right)\wc{x}_iw_i\right\|\le C_1\sqrt{\frac{p}{n}}.$$
   Moreover, Cauchy-Schwarz and \prettyref{lm:error in first-stage} imply
   \begin{eqnarray*}
      &&\left\|\frac{1}{n}\sum_{i=1}^n\left(g'\left(z_i\right)-\mathbb{E}g'\left(z_i\right)\right)(\wt{x}_i-\wc{x}_i)w_i\right\| \\
      &\le& \norm{\hat{\alpha}-\alpha} \opnorm{\frac{1}{n}\sum_{i=1}^n\left(g'\left(z_i\right)-\mathbb{E}g'\left(z_i\right)\right)w_iw_i^T}  \\
      &\le& C_2 \sqrt{\frac{p}{n}} \opnorm{\frac{1}{n}\sum_{i=1}^n\left(g'\left(z_i\right)-\mathbb{E}g'\left(z_i\right)\right)w_iw_i^T},
   \end{eqnarray*}
   where
   \begin{eqnarray*}
     &&\opnorm{\frac{1}{n}\sum_{i=1}^n\left(g'\left(z_i\right)-\mathbb{E}g'\left(z_i\right)\right)w_iw_i^T}\\
     &=& \sup_{\norm{v}=1} \frac{1}{n}\sum_{i=1}^n\left(g'\left(z_i\right)-\mathbb{E}g'\left(z_i\right)\right)(v^Tw_i)^2 \\
     &\le& \frac{1}{2n}\sum_{i=1}^n\left(g'\left(z_i\right)-\mathbb{E}g'\left(z_i\right)\right)^2+\sup_{\norm{v}=1}\frac{1}{2n}\sum_{i=1}^n(v^Tw_i)^4 \\
     &\le&  C_3,
   \end{eqnarray*}
   by \prettyref{lm:fourth moment of covariate}.
   Combining the above high-probability bounds, we obtain the first inequality of \prettyref{lem:for-main}.
 
   The second inequality follows directly from \prettyref{lm:fourth moment of covariate} and the fourth inequality in \prettyref{lem:for-main} which will be proved later,  
   $$ \opnorm{\frac{1}{n}\sum_{i=1}^n|\wt{x}_i|w_iw_i^T} =\sup_{\norm{v}=1} \frac{1}{n}\sum_{i=1}^n (v^Tw_i)^2 |\wt{x}_i| 
   \le \frac{1}{2}\left(\frac{1}{n}\sum_{i=1}^n \wt{x}_i^2+ \sup_{\norm{v}=1} \frac{1}{n}\sum_{i=1}^n (v^Tw_i)^4 \right)
   \le C_4.$$
   
To prove the third inequality of \prettyref{lem:for-main}, we first claim that
\begin{equation}
\max_{1\le i\le n}\norm{w_i} \leq C_5(\sqrt{p}+\sqrt{\log n}) \label{eq:maximumw}
\end{equation}
holds with high probability. To show (\ref{eq:maximumw}), recall the $\Delta$-net $\mathcal{V}$ used in the proof of Lemma \ref{lem:for-gamma}. With $\Delta$ being a sufficiently small constant, we have $\max_{1\le i\le n}\norm{w_i}\leq 2\max_{1\le i\le n}\max_{v\in\mathcal{V}}|v^Tw_i|$, and (\ref{eq:maximumw}) follows by a union bound argument with \prettyref{as:bounded moments}. Combining (\ref{eq:maximumw}) with \prettyref{lem:estimation-huber}, we have
\begin{equation}
\max_{1\leq i\leq n}|(\gamma-\wh{\gamma})^Tw_i|\le \norm{\gamma-\wh{\gamma}}\max_{1\leq i\leq n}\norm{w_i}\le C_6\sqrt{\frac{p}{n(1-\epsilon)^2}}(\sqrt{p}+\sqrt{\log n})\le \frac{1}{2}. \label{eq:maxiinnerp}
\end{equation}
The event (\ref{eq:maxiinnerp}) implies
    $$ \frac{1}{n} \sum_{i=1}^n(|\wt{x}_i|\wedge\wt{x}_i^2)\indc{|(\gamma-\wh{\gamma})^Tw_i+z_i|\leq 1} 
    \nonumber\ge \frac{1}{n} \sum_{i=1}^n (|\wt{x}_i|\wedge\wt{x}_i^2)\indc{|z_i|\leq 0.5} 
     \nonumber,$$
which is lower bounded by $c(1-\epsilon)$ following the same analysis leading to the second inequality of Lemma \ref{lem:for-gamma}.

   For the last inequality of \prettyref{lem:for-main}, we first use triangle inequality and Cauchy-Schwarz, and
   $$\abs{\sqrt{\frac{1}{n}\sum_{i=1}^n\wt{x}_i^2}-\sqrt{\frac{1}{n}\sum_{i=1}^n\wc{x}_i^2}}\le \sqrt{\frac{1}{n}\sum_{i=1}^n(\wt{x}_i-\wc{x}_i)^2}\le \norm{\alpha-\hat{\alpha}}\sqrt{\frac{1}{n}\sum_{i=1}^n\norm{w_i}^2 }, $$
   which is at most $C_7\sqrt{\frac{p^2}{n}}$ by \prettyref{lm:error in first-stage}. Under \prettyref{as:bounded moments}, it is straightforward to check that $\frac{1}{n}\sum_{i=1}^n\wc{x}_i^2\asymp 1$. The proof is complete.
\end{proof}

\begin{proof}[Proof of Lemma \ref{lem:self}]
Let $\xi_i\overset{i.i.d}{\sim}\text{Rademacher}$ and define $X_i^s=X_i\xi_i$. Since $X_i$ is symmetric, for any measurable set $A$,
$$\Prob(X_i^s \in A|\xi_i=1)=\Prob(X_i \in A)=\Prob(-X_i \in A)=\Prob(X_i^s \in A|\xi_i=-1),$$ which implies $X_i^s$'s are independent of $\xi_i$'s. Using the notation $\wt{x}_i^s=\xi_i\wt{x}_i$, we have
$$ \frac{\frac{1}{n}\sum_{i=1}^ng(z_i)\wt{x}_i}{\sqrt{\frac{1}{n}\sum_{i=1}^n \wt{x}_i^2}}=\frac{\frac{1}{n}\sum_{i=1}^n\xi_ig(z_i)\wt{x}_i^s}{\sqrt{\frac{1}{n}\sum_{i=1}^n(\wt{x}_i^s)^2}}.$$  
Define $u_i=\frac{g(z_i)\wt{x}_i^s}{\sqrt{\frac{1}{n}\sum_{i=1}^n(\wt{x}_i^s)^2}}$, and then by \prettyref{lm:Hoeffding}, we have 
$$\mathbb{P}\left(\abs{\sum_{i=1}^n \xi_iu_i}\geq t\Bigg|u\right)\leq 2\exp\left(-\frac{t^2}{2\sum_{i=1}^n u_i^2}\right)\le 2\exp\left(-\frac{t^2}{2n}\right).$$
for any $t>0$. The same bound holds for $\mathbb{P}\left(\abs{\sum_{i=1}^n \xi_iu_i}\geq t\right)$ as well.
By verifying that $2\exp\left(-\frac{t^2}{2n}\right)\le 0.97\alpha$ with $t=1.45\sqrt{n\log\frac{2}{\alpha}}$, the proof is complete.
\end{proof}

\vskip 0.2in
\bibliographystyle{apalike}
\bibliography{reference}

\end{sloppypar}

\end{document}